\title{The diameter of the fractional matching polytope and its hardness implications}
\author{Laura Sanit\`a \\
Combinatorics $\&$ Optimization Department, \\ University of Waterloo \\
\texttt{lsanita@uwaterloo.ca}
}
\date{}
\newtheorem{theorem}{Theorem}
\newtheorem{lemma}{Lemma}
\newtheorem{obs}{Observation}
\newtheorem{definition}{Definition}
\begin{document}

\maketitle

\begin{abstract}
The (combinatorial) diameter of a polytope $P \subseteq \mathbb R^d$ is the maximum value of a shortest path between a pair of vertices 
on the 1-skeleton of $P$, that is the graph where the nodes are given by the $0$-dimensional faces of $P$, and the edges are given the 1-dimensional faces of $P$.   
The diameter of a polytope has been studied from many different perspectives, including a computational complexity point of view. In particular, [Frieze and Teng, 1994] showed that computing the diameter of a polytope is (weakly) NP-hard.

In this paper, we show that the problem of computing the diameter is strongly NP-hard even for a polytope with a very simple structure: namely, the \emph{fractional matching} polytope. We also show that computing a pair of vertices
at maximum shortest path distance on the 1-skeleton of this polytope is an APX-hard problem. We prove these results by giving an \emph{exact characterization}
of the diameter of the fractional matching polytope, that is of independent interest.

\end{abstract}

\section{Introduction}
The (combinatorial) diameter of a polytope $P \subseteq \mathbb R^d$ is the maximum value of a shortest path between a pair of vertices on the 1-skeleton of $P$, which is the graph where the vertices correspond to the 0-dimensional faces of $P$, and the edges are given by the 1-dimensional faces of $P$.
Giving bounds on the diameter of a polytope is a central question in discrete mathematics and computational geometry.
Despite decades of studies, it is still not known
 whether the diameter of a $d$-dimensional 
polytope with $n$ facets can be bounded by a polynomial function of $n$ and $d$ -- this is currently referred to as the 
\emph{polynomial Hirsch Conjecture} \cite{S12}.  
Besides being a fundamental open question in polyhedral theory, the importance of the
conjecture is also due to the fact that any polynomial pivot rule for the Simplex algorithm for Linear Programming 
requires the conjecture to hold. The existence of such rule would have significant consequences
on  
the existence of 
a strongly-polynomial time algorithm for Linear Programming. The latter
is a main open problem, mentioned in the list of the 
“mathematical problems for the next century”
given by S. Smale \cite{S00}. 

The study of diameters of polytopes has a rich history, and we refer e.g. to \cite{S13} for a survey.  
The polynomial Hirsch conjecture is a generalization of a conjecture proposed by Hirsch in
1957, which states that the combinatorial diameter of any $d$-dimensional polytope with
$n$ facets is at most $n-d$. This conjecture has been disproved first for unbounded
polyhedra \cite{KW67} and then for bounded polytopes \cite{S12}, though it is known to hold
for many classes of polytopes, such as for $0/1$ polytopes \cite{N89}. 
For the currently best known upper bound on the diameter we refer to 
\cite{N17}. Besides providing general bounds, many researchers in the past 50 years have given bounds and/or characterizations of the diameter of 
 polytopes that correspond to the set of feasible solutions of classical combinatorial optimization problems.
 Just to mention a few, such problems include matching \cite{BR74,C75}, TSP \cite{PR74,RC98},
 edge cover \cite{H91}, fractional stable set \cite{MS14}, network flow and transportation problems \cite{Ba84,BFH17,BDF17,BHS06},
 stable marriage \cite{EMM14}, and many more.

\smallskip
The diameter of a polytope has been studied from many different perspectives,
including a
computational complexity point of view. 
In particular, Frieze and Teng \cite{FT94} showed that
computing the diameter of a polytope is (weakly) NP-hard. 
Digging more into the complexity of computing the diameter of a polytope remains  
an interesting problem (see e.g. problem 10 on the list of
35 algorithmic problems in polytope theory, given in \cite{KP03}).

In this paper, we show that
computing the diameter of a polytope is a strongly NP-hard problem, and finding a pair of vertices at maximum (shortest path) distance on the 1-skeleton of a polytope is an APX-hard problem. 
In fact, what is probably more interesting, is that we can show hardness
already for a polytope that has a very simple structure and it is quite well-understood: namely,
the \emph{fractional matching} polytope.
We achieve these results by giving an exact characterization of the diameter of such polytope, 
which technically constitutes the main result of this paper, and is of independent interest.
We are going to describe such characterization next.

One well-studied polytope for which a characterization of the diameter is known since the mid 70's, 
is the \emph{matching} polytope, that is the polytope given by the convex combination
of the characteristic vectors of matchings of a graph. 
Formally, given a simple graph $G=(V,E)$ with $n$ nodes and $m$ edges, the matching polytope $\mathcal P_M$ is as follows \cite{E65}:

\begin{equation}
\label{eq:match_polytope}
\begin{array}{lll}
\mathcal P_M:= \{x \in \mathbb R^m:  &  x(\delta(v)) \leq 1 &  \forall v \in V, \\
 & x(E(S)) \leq \frac{|S|-1}{2} & \forall S\subseteq V \mbox{ with } |S| \geq 3, |S| \mbox{ odd},\\
 & x\geq 0 \}.&
\end{array}
\end{equation}

\noindent
Here $\delta(v)$ denotes the set of all edges of $G$ incident into the node $v$, $E(S)$ denotes the set of edges with both endpoints in $S$,
and for a set $F \subseteq E$, $x(F) =\sum_{e \in F} x_e$.
The matching polytope is one of the most studied polytopes in combinatorial optimization. 
As shown in \cite{BR74,C75} the diameter of the matching polytope is equal to the maximum cardinality of a matching in $G$, i.e. if we denote by $diam(\mathcal P)$ and by $vert(\mathcal P)$ the diameter and the set of vertices of a polytope $\mathcal P$, respectively, 
the result in \cite{BR74,C75} states 

$$diam(\mathcal P_M)= \max_{x \in vert(\mathcal P_M)}\{{\bf 1}^T x\}.$$

\medskip
The \emph{fractional} matching polytope $\mathcal P_{FM}$ is the polytope associated with the LP-relaxation of the standard IP formulation for the matching problem, and it is given by dropping the so-called odd-set inequalities from (\ref{eq:match_polytope}): 
\begin{equation}
\label{eq:polytope}
\mathcal P_{FM}:= \{x \in \mathbb R^m: \;\; x(\delta(v)) \leq 1 \;\; \forall v \in V, \;\; x\geq 0\}.
\end{equation}

As for the matching polytope, this polytope has been extensively studied in the optimization community. It is well known
to be an half-integral polytope, and many structural results about its vertices, 
faces, and adjacency of the vertices are known. In particular, the support of a vertex $x$ of $\mathcal P_{FM}$
is the union of a matching, denoted by $\mathcal M_x$,  given by the edges $\{e\in E: x_e=1\}$, and a collection of node-disjoint odd cycles, denoted by $\mathcal C_x$, given by the edges $\{e\in E: x_e=\frac{1}{2}\}$, as shown in \cite{Ba70}.

Our main result is a characterization of the diameter of this polytope, given in Theorem \ref{thm:main_result}.

\begin{theorem}
\label{thm:main_result} The diameter of the fractional matching polytope $\mathcal P_{FM}$ is 
$$diam(\mathcal P_{FM})= \max_{x \in vert(\mathcal P_{FM})}\{{\bf 1}^T x + \frac{|\mathcal C_x|}{2}\}.$$
\end{theorem}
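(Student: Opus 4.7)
My plan is to prove Theorem~\ref{thm:main_result} via matching upper and lower bounds on $\text{diam}(\mathcal{P}_{FM})$. Setting $f(z) := \mathbf{1}^T z + |\mathcal{C}_z|/2$, which can be rewritten as $|\mathcal{M}_z| + \sum_{C \in \mathcal{C}_z}(|C|+1)/2$, the target becomes $\text{diam}(\mathcal{P}_{FM}) = \max_z f(z)$. I would structure the argument around a preliminary adjacency characterization: two vertices $x, y$ of $\mathcal{P}_{FM}$ are adjacent iff the subgraph of $G$ induced by their support difference, labeled by how $x$ and $y$ treat each edge, forms a single irreducible alternating configuration (e.g.\ a matching swap along an alternating path, a cycle-to-matching trade, or the growth/collapse of an odd cycle).

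For the lower bound, I would show that $f$ is $1$-Lipschitz on the $1$-skeleton of $\mathcal{P}_{FM}$, i.e.\ $|f(x) - f(y)| \leq 1$ whenever $x$ and $y$ are adjacent, by a case analysis over the adjacency types. Since $\mathbf{0}$ is a vertex with $f(\mathbf{0}) = 0$, the Lipschitz property immediately gives $d(\mathbf{0}, x^*) \geq f(x^*)$ for any vertex $x^*$ maximizing $f$, yielding $\text{diam}(\mathcal{P}_{FM}) \geq \max_z f(z)$.

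For the upper bound, I would first establish $d(z, \mathbf{0}) \leq f(z)$ for every vertex $z$ via an explicit peeling construction: remove each matching edge of $z$ with one pivot, and for each odd cycle $C \in \mathcal{C}_z$ of length $2k+1$, perform one pivot to convert $C$ into a matching of size $k$ inside $C$, then peel these $k$ edges one by one. The adjacency of the intermediate vertices is verified from the adjacency characterization, and the total number of pivots is $|\mathcal{M}_z| + \sum_C (1+k) = f(z)$. However, the naive triangle bound through $\mathbf{0}$ only yields $d(x, y) \leq f(x) + f(y) \leq 2 \max_z f(z)$, which is too weak by a factor of two. To obtain $d(x, y) \leq \max_z f(z)$, I would argue via a refined inductive scheme that decomposes the combined support of $x$ and $y$ into irreducible components and charges one pivot per atomic unit of the combined structure, bounding the total by $\max_z f(z)$.

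The main obstacle will be this refined upper bound. Simple examples show that $d(x, y)$ can exceed $\max(f(x), f(y))$ when $x$ and $y$ occupy disjoint parts of the graph (e.g.\ half-cycles on two disjoint triangles give $d(x,y) = f(x) + f(y)$), while still remaining at most $\max_z f(z)$. Capturing this requires simultaneously tracking the overlap between $x$ and $y$ and the global capacity of the graph. The proof will likely proceed by inducting on the combined combinatorial structure and, at each step, choosing a pivot that resolves one atomic unit of $\text{supp}(x) \cup \text{supp}(y)$ without producing a longer walk than $\max_z f(z)$. Showing that such progress-making pivots always exist, particularly in the delicate case where $x$ and $y$ share cycles but differ in matching edges (or vice versa), will be the technical heart of the argument.
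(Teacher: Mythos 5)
Your overall skeleton (upper bound plus matching lower bound, with $f(z)=\mathbf{1}^Tz+|\mathcal C_z|/2$) matches the paper, and your lower-bound idea is sound in spirit: the paper proves exactly the fact you need, namely that along any edge of the $1$-skeleton the odd-cycle structure can change only in a controlled way, from which a $1$-Lipschitz-type bound on a potential follows and gives $d(\mathbf 0,x^*)\ge f(x^*)$. (The paper does this not by a case analysis over a claimed complete adjacency characterization, but by passing to the loop-augmented fractional \emph{perfect} matching polytope and showing, via a rank/uniqueness-of-convex-combination argument, that two adjacent vertices differ in at most one component with at most $|V(K)|+1$ edges, hence their sets of odd cycles -- loops included -- differ in at most two elements. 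Your plan presupposes an exact ``iff'' adjacency characterization that you have not supplied; the paper only ever proves \emph{sufficient} adjacency conditions, and deliberately avoids needing the full characterization by using this structural claim instead.)

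The genuine gap is the upper bound, which is the entire technical content of the theorem. You correctly observe that routing through $\mathbf 0$ loses a factor of two and that one must instead bound $d(z,y)$ by $f(w)$ for a suitable vertex $w$, but you then defer the construction to an unspecified ``refined inductive scheme'' that ``charges one pivot per atomic unit.'' That is precisely the part that fails if attempted naively: the paper's Example A shows that any path which handles the cycles and matching components of $\mathcal G_z\,\Delta\,\mathcal G_y$ one at a time already exceeds the claimed bound, so the pivots must combine several pieces at once (e.g.\ two odd cycles plus a connecting alternating path in a single move, as in Lemma~\ref{cor:adjacency}(d)--(f)). Making this work is what the paper's machinery is for: it first augments $z$ inside $\mathcal G_z\cup\mathcal G_y$ to a vertex $w$ (Algorithm 1, with the notion of \emph{critical} cycles), assigns half-tokens to the covered nodes and to the cycles of $w$, and then routes $w\to r\to y$ (Algorithms 2 and 3) using \emph{witnesses}, \emph{target matchings}, and special two-move treatment of \emph{dangerous} cycle-components, maintaining half a dozen invariants so that every move is paid by exactly two tokens and the witnesses' tokens survive until the cycles of $\mathcal C_y\setminus\mathcal C_w$ appear. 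Your proposal contains neither this mechanism nor any concrete substitute for it, and without it the inequality $d(x,y)\le\max_z f(z)$ is not established; as written, the proposal proves only the easy direction and the weak bound $d(x,y)\le f(x)+f(y)$.
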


As an observation, note that if the graph $G$ is bipartite, then $\mathcal C_x =\emptyset$ for all vertices $x$ of $
\mathcal P_{FM}$, and our diameter bound for $\mathcal P_{FM}$ reduces to the one for $\mathcal P_{M}$, 
as expected, since the odd-set inequalities are redundant for bipartite graphs.

\smallskip
As already mentioned, our result has an important algorithmic consequence regarding the hardness of computing the diameter of a polytope. 
As we will show later, combining Theorem \ref{thm:main_result} with an easy reduction
from the NP-complete problem of finding a set of triangles partitioning the vertices of a graph, 
one can easily get that computing the diameter of a polytope is a strongly NP-hard problem.

\begin{theorem}
\label{thm:second_result} Computing the diameter of a polytope is a strongly NP-hard problem.
\end{theorem}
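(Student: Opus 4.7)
The plan is to reduce from the \textsc{Partition Into Triangles} problem: given a graph $H$ on $n$ vertices with $3 \mid n$, decide whether $V(H)$ can be partitioned into vertex-disjoint triangles. This is a classical strongly NP-complete problem. Given such an instance $H$, I would construct the fractional matching polytope $\mathcal{P}_{FM}$ of $H$ in polynomial time directly from its explicit description \eqref{eq:polytope}, and then invoke Theorem~\ref{thm:main_result} to read off the diameter. The heart of the reduction is the equivalence
$$diam(\mathcal{P}_{FM}) = \frac{2n}{3} \iff H \text{ admits a triangle partition}.$$

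To prove this equivalence, I would rewrite the quantity of Theorem~\ref{thm:main_result} in a more convenient form. Fix a vertex $x$ of $\mathcal{P}_{FM}$, let $k_x := |\mathcal{C}_x|$, and let $N_x$ denote the number of nodes of $H$ covered by the support of $x$. Using $\mathbf{1}^T x = |\mathcal{M}_x| + \tfrac{1}{2}\sum_{C \in \mathcal{C}_x}|C|$ and $N_x = 2|\mathcal{M}_x| + \sum_{C \in \mathcal{C}_x}|C|$, a direct substitution yields
$$\mathbf{1}^T x + \frac{|\mathcal{C}_x|}{2} = \frac{N_x + k_x}{2}.$$
Since $N_x \leq n$ and every odd cycle has length at least $3$, we have $k_x \leq N_x/3 \leq n/3$, so $(N_x+k_x)/2 \leq 2n/3$. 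The equality case is the key step: it forces $N_x = n$, $k_x = n/3$, and every cycle in $\mathcal{C}_x$ to be a triangle (with $\mathcal{M}_x = \emptyset$), i.e.\ $\mathcal{C}_x$ is a triangle partition of $V(H)$. Conversely, any triangle partition $T_1,\ldots,T_{n/3}$ of $V(H)$ yields a vertex of $\mathcal{P}_{FM}$ by setting $x_e = 1/2$ on the triangle edges and $0$ elsewhere, achieving the value $2n/3$.

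Finally, I would note that a parity observation makes the reduction robust: since each $|C|$ is odd, $N_x \equiv k_x \pmod 2$, hence $(N_x+k_x)/2$ is always a (nonnegative) integer, and whenever $H$ has no triangle partition the diameter is at most $2n/3 - 1$. Thus a polynomial-time algorithm for the diameter would decide \textsc{Partition Into Triangles}, giving strong NP-hardness. The only mildly delicate step is the equality analysis in the upper bound on $(N_x+k_x)/2$; everything else is an arithmetic manipulation of Theorem~\ref{thm:main_result} together with the structure of vertices of $\mathcal{P}_{FM}$ described after~\eqref{eq:polytope}.
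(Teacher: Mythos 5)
Your proposal is correct and follows essentially the same route as the paper: a reduction from \textsc{Partition Into Triangles} in which Theorem~\ref{thm:main_result} gives the diameter, the bounds ${\bf 1}^Tx \leq |V|/2$ and $|\mathcal C_x| \leq |V|/3$ show the value $\tfrac{2}{3}|V|$ is attainable only by a vertex whose support is a triangle partition, and conversely a triangle partition yields such a vertex. Your rewriting via $(N_x+k_x)/2$ and the parity remark are only cosmetic refinements of the paper's equality analysis.
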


With some extra effort, the hardness result can be strengthen to get APX-hardness for the optimization problem of finding a pair of vertices at maximum distance on the 1-skeleton of a polytope.

\begin{theorem}
\label{thm:third_result} Finding a pair of vertices
at maximum (shortest path) distance on the 1-skeleton of a polytope is an APX-hard problem.
\end{theorem}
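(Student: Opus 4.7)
\textbf{Proof plan for Theorem~\ref{thm:third_result}.} The plan is to use the characterization in Theorem~\ref{thm:main_result} to give a gap-preserving reduction from an APX-hard promise version of maximum vertex-disjoint triangle packing. Following classical hardness-of-approximation results (Kann, with later refinements), it is APX-hard to distinguish graphs $G$ with a perfect triangle partition (so $t^{*}(G) = n/3$) from graphs $G$ for which the maximum triangle packing satisfies $t^{*}(G) \leq (1-\delta)n/3$, for some absolute constant $\delta > 0$. The reduction is direct: the hard instance of Theorem~\ref{thm:third_result} is simply $\mathcal{P}_{FM}(G)$.

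By Theorem~\ref{thm:main_result}, writing $c_k(x)$ for the number of odd cycles of length $k$ in the support of a vertex $x$ of $\mathcal{P}_{FM}(G)$ and $m(x)$ for its matching part, we have
\[
D(x) \;=\; m(x) + \sum_{\substack{k \geq 3 \\ k \text{ odd}}} \tfrac{k+1}{2}\, c_k(x), \qquad 2 m(x) + \sum_{k} k\, c_k(x) \;\leq\; n.
\]
A direct manipulation of these two relations yields the universal inequality $D(x) \leq n/2 + \tfrac{1}{2}\sum_k c_k(x)$. Moreover, since $(k+1)/(2k) \leq 3/5$ for $k \geq 5$, any odd cycle of length $\geq 5$ contributes strictly less value to $D(x)$ per vertex of its support than a triangle, which contributes exactly $2/3$ (and a matching edge contributes $1/2$). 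Bounding $\sum_k c_k(x) \leq c_3(x) + (n - 3 c_3(x))/5 = (2 c_3(x) + n)/5$, together with the observation that the triangles in the support of any vertex $x$ form a vertex-disjoint triangle packing in $G$ (so $c_3(x) \leq t^{*}(G)$), one obtains the key inequality
\[
D^{*} \;\leq\; \tfrac{3n}{5} + \tfrac{t^{*}(G)}{5}.
\]

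In the YES case, the vertex $x^{*}$ of $\mathcal{P}_{FM}(G)$ with $x^{*}_e = 1/2$ on every edge of a perfect triangle partition realizes $D(x^{*}) = n/2 + (n/3)/2 = 2n/3$, so $D^{*} = 2n/3$. In the NO case, the above inequality gives $D^{*} \leq 3n/5 + (1-\delta)n/15 = 2n/3 - \delta n/15$. The resulting multiplicative gap $1 - \Omega(\delta)$ is a constant bounded away from $1$, which, by a standard L-reduction argument, transfers the APX-hardness of triangle packing to the problem of finding a pair of diametrally opposite vertices on the $1$-skeleton of a polytope.

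The principal obstacle in executing this plan is not in the clean chain of inequalities above, but in the choice of the starting APX-hard instance: one needs a promise version of triangle packing with \emph{perfect completeness}, i.e. where YES instances admit an \emph{exact} triangle partition (not just one of size $(1-o(1))n/3$), and the soundness is $(1-\delta)n/3$ for a constant $\delta>0$. Verifying that known APX-hardness constructions for maximum triangle packing (or maximum $3$-dimensional matching) deliver such a perfect-completeness gap, or alternatively constructing an ad-hoc gadget-augmented instance that does, is the delicate step. Once such a starting instance is in hand, the bounds derived from Theorem~\ref{thm:main_result} immediately convert the combinatorial gap into the required gap for the diameter.
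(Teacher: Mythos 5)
Your counting is correct as far as it goes: with $D(x)=m(x)+\sum_k\frac{k+1}{2}c_k(x)$ and the vertex-budget constraint you do get $D(x)\le \frac{3n}{5}+\frac{c_3(x)}{5}$, hence $D^*\le \frac{3n}{5}+\frac{t^*(G)}{5}$, and in the perfectly-partitionable case $D^*=\frac{2n}{3}$. This is genuinely different from the paper, which does not work on $\mathcal P_{FM}(G)$ directly but builds a gadget graph $G'$ (nine auxiliary nodes per triangle, plus a mirrored copy joined by a perfect matching) and gives an L-reduction from the \emph{optimization} version of triangle packing in bounded-degree graphs. However, your argument has two genuine gaps. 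First, the starting point you need -- NP-hardness of distinguishing graphs with an exact triangle partition from graphs with $t^*\le(1-\delta)n/3$ -- is not what Kann's APX-hardness result gives; it is a ``gap at location~1'' statement that you would have to establish (e.g.\ by transferring a Petrank-style result through a partition-preserving reduction), and you explicitly leave this unresolved. The paper avoids the issue entirely by reducing from the standard maximization problem, using the gadgets to keep the gap.

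Second, and more fundamentally for the statement as phrased, a gap in the \emph{optimal values} does not by itself yield APX-hardness of \emph{finding a pair of vertices at maximum distance}: an algorithm returning a pair whose distance is $(1-\epsilon)D^*$ cannot be used to distinguish your YES/NO instances unless you can evaluate (or usefully bound) the distance of the returned pair, and there is no polynomial-time procedure for that built into your reduction. This is exactly what the paper's second claim supplies: given \emph{any} pair of vertices at distance $k'$, it runs Algorithm~1 to produce a vertex $w$ with $k'\le {\bf 1}^Tw+\frac{|\mathcal C_w|}{2}$, cleans $\mathcal C_w$ down to triangles without decreasing this quantity, and extracts a triangle packing of size $k$ with $opt_G-k\le opt_{G'}-k'$; together with $opt_{G'}\le a\cdot opt_G$ (which needs the bounded-degree assumption and the padding by $G'$ -- note that on your unmodified $G$ the analogous condition fails, e.g.\ for triangle-free graphs with perfect matchings, where $t^*=0$ but the diameter is about $n/2$), this gives the L-reduction. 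Your proposal would need an analogous solution back-mapping, and the ``standard L-reduction argument'' you invoke does not exist off the shelf for the direct construction.
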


We conclude this introduction with a brief description of the strategy we follow to give our characterization.
We prove our result in two steps: first, we show that the value given
in Theorem \ref{thm:main_result} is an \emph{upper} bound on the diameter, and then 
we show that it is a \emph{lower} bound on the distance between two specific vertices
of $\mathcal P_{FM}$. Proving the first step in particular requires some effort. 

The upper bound proof is based on a \emph{token} argument. 
Specifically, given two distinct vertices $z$ and $y$ of $\mathcal P_{FM}$, we prove that the distance between $z$ and $y$ is bounded by 
${\bf 1}^Tw + \frac{|\mathcal C_w|}{2}$ for some vertex $w$ of $\mathcal P_{FM}$ 
whose support graph is in the union of the support graphs of $z$ and $y$. 
We will assign to each node $v \in V$ with $w(\delta(v))=1$ a token of value $\frac{1}{2}$, and to each cycle $C \in \mathcal C_w$ a token of value $\frac{1}{2}$ (note that the total sum of the token values equals ${\bf 1}^T w + \frac{|\mathcal C_w|}{2}$). We will then define a path on the 1-skeleton of $\mathcal P_{FM}$ from $z$ to $y$, and show that each \emph{move} along this path can be payed by using two tokens from some nodes or cycles,
where a move refers to the process of traversing an edge on the 1-skeleton of $\mathcal P_{FM}$.

 We would like to highlight that the selection of the moves to take is not straightforward.
 A trivial attempt to define a $z$-$y$ path could be to (i) define a path from $z$ to a $0/1$-vertex $\bar z$ obtained by ``rounding'' the half-valued coordinates of one of the cycles in $\mathcal C_z$ at each step, (ii) define a path from $y$ to a $0/1$-vertex $\bar y$ obtained by ``rounding'' the half-valued coordinates of one of the cycles in $\mathcal C_y$ at each step, and (iii) define a path from $\bar z$ to $\bar y$ 
 guided by the symmetric difference between the matchings corresponding to $\bar z$ and $\bar y$ (indeed, 
 this corresponds to a $\bar z$-$\bar y$ path on the 1-skeleton of the matching polytope $\mathcal P_{M}$). 
 Unfortunately, it is not difficult to construct examples where
 \emph{any} path of this form has a length strictly larger than the claimed bound (see Example A in the appendix). 
In order to reduce the number of moves, we will have to exploit better the adjacency properties of the vertices,
and we will have to keep track of the tokens we use to pay for our moves in a very careful way.

\section{Preliminaries}
In this section, we will introduce some notations and state some known structural results regarding the vertices
of the polytope $\mathcal P_{FM}$. 

To avoid confusion, we will always refer to the extreme points of a given polytope as \emph{vertices}, and to  the elements of $V$ of a given graph $G=(V,E)$ as \emph{nodes}. Furthermore, for a generic (sub)graph $H$, we will denote by $V(H)$ and $E(H)$ its set of nodes and edges, respectively. 
We say that a cycle $C$ (resp. a path $P$) is odd if $|E(C)|$ (resp. $|E(P)|$) is odd.
Given a matching $M$, an $M$-alternating path is a path that alternates edges in $M$ and edges not in $M$.
A node is $M$-exposed if it is not the endpoint of an edge in $M$.
An $M$-augmenting path is an $M$-alternating path whose endpoints are $M$-exposed.

We start by stating the well-known half-integrality property of the vertices of $\mathcal P_{FM}$. 
\begin{theorem}[\cite{Ba70}]\label{thm:balinski}
Every basic feasible solution to $\mathcal P_{FM}$ has components equal to
$0 , 1$ or $\frac{1}{2}$, and the edges with half-integral components
induce node-disjoint odd cycles. 
Furthermore, every half-integral vector $x \in \mathcal P_{FM}$
such that the half-integral components of $x$
induce node-disjoint odd cycles, is a vertex of $\mathcal P_{FM}$.
\end{theorem}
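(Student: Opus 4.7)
The plan is to prove both directions of this equivalence via the standard vertex criterion: a point $x \in \mathcal P_{FM}$ is a vertex if and only if the only $y \in \mathbb R^m$ satisfying $y_e = 0$ whenever $x_e = 0$ and $y(\delta(v)) = 0$ whenever $x(\delta(v)) = 1$ is $y = 0$. In one direction, the existence of such a non-zero $y$ implies $x \pm \varepsilon y \in \mathcal P_{FM}$ for small $\varepsilon > 0$, contradicting extremality; in the other, the absence of such $y$ certifies $x$ as a vertex. I use this criterion for both implications.

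For the forward direction, assume $x$ is a vertex and let $E^+ = \{e : x_e > 0\}$, $E^f = \{e : 0 < x_e < 1\}$, and $V^* = \{v : x(\delta(v)) = 1\}$. I first observe that any edge $e = uv$ with $x_e = 1$ is isolated from the rest of $E^+$: the constraint at $u$ forces $x_{e'} = 0$ for all other $e' \in \delta(u)$, and symmetrically at $v$, and both $u,v \in V^*$. The structural question therefore reduces to the fractional subgraph $H^f = (V, E^f)$. I then argue by topological case analysis on each connected component $H'$ of $H^f$: if $H'$ contains an even cycle, alternate $\pm 1$ around it to produce a non-trivial admissible $y$; if $H'$ contains two distinct cycles, combine alternating-sign flows on the two cycles with a signed walk between them to cancel degree sums at intermediate nodes, again yielding a non-trivial $y$; if $H'$ contains two non-tight nodes (nodes $v$ with $x(\delta(v)) < 1$), route $\pm 1$ alternately along a path between them. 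The only remaining possibility is that $H'$ is a single odd cycle with every node tight; on such a cycle the tightness relations $x_{e_i} + x_{e_{i+1}} = 1$ telescope around the odd length and force $x_{e_i} = 1/2$ for every edge. This establishes half-integrality together with the odd-cycle structure.

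For the reverse direction, let $x \in \mathcal P_{FM}$ be half-integral with $1$-valued edges forming a matching $M$ and $1/2$-valued edges forming a disjoint union $\mathcal C$ of odd cycles, and suppose $y$ satisfies the tight constraints of $x$. For each $e = uv \in M$, both endpoints are tight and $e$ is their unique positive edge, so $y(\delta(u)) = 0$ reduces to $y_e = 0$. For each odd cycle $C = e_1 e_2 \cdots e_{2k+1} \in \mathcal C$, every node of $C$ is tight (its two incident half-valued edges sum to $1$), hence $y_{e_i} + y_{e_{i+1}} = 0$ for all $i$ taken mod $2k+1$; iterating this alternation around an odd-length cycle forces $y_{e_i} = -y_{e_i}$, hence $y \equiv 0$ on $C$. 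Combining these two observations gives $y = 0$ globally, certifying that $x$ is a vertex.

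The main obstacle is the topological case analysis in the forward direction: one must exhibit, for every non-conforming component of $H^f$, an explicit direction $y$ whose support respects the zero entries of $x$ and whose degree sum vanishes at every tight node. A unified way to organise the cases is to note that a connected component of $H^f$ that is neither a tree nor a single odd cycle with all nodes tight has either a non-trivial element in its cycle space killing all tightness constraints (the even-cycle or two-cycle situations), or a signed path between two non-tight endpoints; producing any one such direction already contradicts extremality, and ruling out all of these possibilities forces the desired structure.
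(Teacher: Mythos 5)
The paper does not prove this statement at all (it is quoted from Balinski's 1970 paper), so your argument has to stand on its own. Your overall framework is the right one: the kernel-of-tight-constraints criterion is correct, your reverse direction is complete and correct, and the reduction of the forward direction to the fractional subgraph $H^f$ (after observing that $1$-valued edges are isolated) is fine. However, the forward direction has a genuine gap: your case analysis (even cycle; two distinct cycles; two non-tight nodes) does not exhaust the possibilities, so the sentence ``the only remaining possibility is that $H'$ is a single odd cycle with every node tight'' is false as stated. The missing configuration is a component containing exactly one cycle, which is odd, together with exactly one non-tight node. Concretely, take a triangle $a,b,c$ with $x_{ab}=0.6$, $x_{bc}=x_{ca}=0.4$ (nodes $a,b$ tight, $c$ not), or the same triangle with values $0.6,0.4,0.4$ on the cycle and a pendant edge $\{c,d\}$ of value $0.2$ (all cycle nodes tight, the single leaf $d$ not). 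These points are feasible, their fractional support has no even cycle, no two cycles, and at most one non-tight node, yet they are not of the claimed form and are indeed not vertices. Your three constructions produce no perturbation here, so extremality is never contradicted and half-integrality does not follow.

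The repair is standard and small: you must add a fourth perturbation for the case ``one odd cycle and one non-tight node in the same component.'' Assign alternating $\pm 1$ around the odd cycle starting and ending at a node $p$ of the cycle (because the cycle is odd, the two edges at $p$ receive the same sign, leaving a residue of $\pm 2$ at $p$ and $0$ elsewhere on the cycle), then continue with values $\mp 2,\pm 2,\dots$ along a path in $H^f$ from $p$ to the non-tight node, so that all intermediate tight constraints cancel and the only nonzero residue sits at a node whose degree constraint is slack. (Your unified reformulation at the end inherits the same omission; the clean dichotomy is: a component admits a nonzero admissible direction as soon as it contains an even cycle, or two ``odd objects,'' where an odd object is either an odd cycle or a non-tight node — and note that when the two cycles share edges, i.e.\ form a theta subgraph, two of the three internally disjoint paths have the same parity and already yield an even cycle, which is cleaner than your ``signed walk between the two cycles.'') With that extra case the exhaustion is correct: a tree component has two non-tight leaves, a unicyclic component that is not exactly a tight odd cycle contains either an even cycle or an odd cycle plus a non-tight node, and components with two or more independent cycles are covered; only then does the telescoping argument forcing all values to $\tfrac12$ on a tight odd cycle legitimately conclude the proof.
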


For a given vector $x \in \mathbb R^E$, we will refer to $\mathcal G_x$ as the graph induced by the support of $x$,
i.e. the graph induced by the set of edges $\{e\in E: x_e >0\}$. 
As already mentioned in the introduction, if $x$ is a vertex of $\mathcal P_{FM}$, then $\mathcal G_x$ is the union of a matching (denoted by $\mathcal M_x$) induced by the edges $\{e\in E: x_e=1\}$ and a collection of node-disjoint odd cycles (denoted by $\mathcal C_x$) induced by the edges $\{e\in E: x_e=\frac{1}{2}\}$. Furthermore, given two vectors $x,y \in \mathbb R^E$, 
 we let $\mathcal G_x \Delta \mathcal G_y$
 be the graph induced by the set of edges $\{e\in E: x_e \neq y_e\}$.

The following definition will be highly used later.

\begin{definition}
We say that an odd cycle $C$ of $G$ is \emph{packed} by
a matching $M$ (resp. by a vertex $x$), if $|M \cap E(C)| = \frac{|C|-1}{2}$ (resp. if $|\mathcal M_x \cap E(C)| = \frac{|C|-1}{2}$).   
\end{definition}

The paper in \cite{B13} gives an adjacency characterization for the vertices of the fractional \emph{perfect} $b$-matching polytope
 associated to a (possibly non-simple) graph. Using this result, it is 
 easy to derive adjacency properties for the vertices of $\mathcal P_{FM}$.
We here explicitly state a lemma that follows from Theorem 25 in \cite{B13}. This lemma gives some sufficient conditions for 
two vertices of $\mathcal P_{FM}$ to be adjacent. These are all the conditions that we will use in Section \ref{sec:diameter} to prove our upper bound
(refer to Figure~\ref{fig:adjacency} for some examples). 
To make the paper self-contained, we prove this lemma in the appendix.

\begin{lemma} (follows from  \cite{B13})
\label{cor:adjacency}
Let $y$ and $z$ be two vertices of $\mathcal P_{FM}$, and let $\mathcal C_z(y) \subseteq \mathcal C_z$ (resp. $\mathcal C_y(z) \subseteq \mathcal C_y$) be the set of cycles of $\mathcal G_z$ (resp. $\mathcal G_y$) packed
by $\mathcal M_y$ (resp. packed by $\mathcal M_z$).
 Then $y$ and $z$ are adjacent if $\mathcal G_{y} \Delta \mathcal G_z$ contains exactly one component $C$ 
 with this component being

\begin{itemize}
\item[(a)] one even cycle in $\mathcal M_z \Delta \mathcal M_y$, or
\item[(b)] one path in $\mathcal M_z \Delta \mathcal M_y$, or
\item[(c)] one odd cycle in $\mathcal C_z(y) \cup \mathcal C_y(z)$, or
\item[(d)] one odd cycle in $\mathcal C_z(y)$ and one odd cycle in $\mathcal C_y(z) $ intersecting in exactly one node, or
\item[(e)] two node-disjoint odd cycles in $\mathcal C_z(y) \cup \mathcal C_y(z)$ and a path $P \subseteq \mathcal M_y \Delta \mathcal M_z$ intersecting the cycles at its endpoints, or
\item[(f)] one odd cycle in $\mathcal C_z(y) \cup \mathcal C_y(z)$ and a path $P \subseteq \mathcal M_y \Delta \mathcal M_z$ intersecting the cycle at one of its endpoints.
\end{itemize}
\end{lemma}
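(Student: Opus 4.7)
The plan is to derive the lemma from Theorem~25 of \cite{B13}, which characterizes adjacency in the fractional perfect $b$-matching polytope of a (possibly non-simple) graph. First, I reduce $\mathcal{P}_{FM}$ to such a polytope by a standard slack construction: let $G' = (V, E \cup \{\ell_v : v \in V\})$ be obtained from $G$ by adding one self-loop $\ell_v$ at every node, and for $x \in \mathcal{P}_{FM}$ define $\tilde x \in \mathbb{R}^{E'}$ by $\tilde x_e = x_e$ for $e \in E$ and $\tilde x_{\ell_v} = (1 - x(\delta(v)))/2$. Since self-loops contribute twice to the degree in $b$-matching formulations, $\tilde x$ satisfies $\tilde x(\delta(v)) + 2 \tilde x_{\ell_v} = 1$ at every node, so $\tilde x$ lies in the fractional perfect $1$-matching polytope $\mathcal{P}'$ of $G'$. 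The map $x \mapsto \tilde x$ is an affine bijection between $\mathcal{P}_{FM}$ and $\mathcal{P}'$ (its inverse is the projection on the $E$-coordinates), and hence preserves the whole face lattice; in particular, adjacency in $\mathcal{P}_{FM}$ coincides with adjacency in $\mathcal{P}'$. A direct check shows that for a vertex $x$ of $\mathcal{P}_{FM}$, $\tilde x_{\ell_v} = \tfrac12$ iff $v$ is missed by both $\mathcal{M}_x$ and $\mathcal{C}_x$, and equals $0$ otherwise.

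Next, I invoke Theorem~25 of \cite{B13}, which provides sufficient combinatorial conditions on the shape of $\mathcal{G}_{\tilde y} \Delta \mathcal{G}_{\tilde z}$ (as a multigraph in $G'$, with self-loops counted as odd cycles of length one) for two vertices $\tilde y, \tilde z$ of $\mathcal{P}'$ to be adjacent; the admissible shapes amount to a single alternating cycle or path in $\mathcal{M}_{\tilde y} \Delta \mathcal{M}_{\tilde z}$, a single odd cycle, two odd cycles sharing a node, or two node-disjoint odd cycles linked by an alternating path. For each configuration (a)--(f) of the lemma I compute $\mathcal{G}_{\tilde y} \Delta \mathcal{G}_{\tilde z}$ inside $G'$ and match it to one of these shapes. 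The interesting bookkeeping concerns the self-loops. In case (a) no node changes matched/exposed status, so no self-loop enters, and the even cycle itself suffices. In case (b) both endpoints of the alternating path swap matched/exposed status and, because the prescribed set is the entire component of $\mathcal{G}_y \Delta \mathcal{G}_z$, are not covered by any cycle on either side; the two corresponding self-loops therefore flip, and the path together with these two length-one cycles is precisely the ``two odd cycles joined by an alternating path'' shape. In case (c) the unique node of the cycle exposed by the packing matching acquires a half-valued self-loop on the opposite side, producing a ``two odd cycles sharing one node'' configuration; in case (d) the exposed nodes of the two packings coincide at the shared node, no self-loop enters, and the shape is again ``two odd cycles sharing one node''. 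Cases (e) and (f) combine (b) with (c)/(d): the path endpoints lie on odd cycles and thus are saturated on both sides, so no additional self-loops appear, and the resulting object is connected and of the required ``cycles joined by a path'' form.

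The main obstacle is exactly this self-loop bookkeeping: at each node of the prescribed component one needs to determine precisely the values $\tilde y_{\ell_v}, \tilde z_{\ell_v}$, verify that after the additional self-loops are inserted the augmented symmetric difference is still a single connected component in $G'$, and identify it with one of the shapes admitted by Theorem~25 of \cite{B13}. The hypothesis that $\mathcal{G}_y \Delta \mathcal{G}_z$ has exactly one component is crucial here: it forces the matchings and cycles of $y$ and $z$ to agree everywhere outside the prescribed configuration, which in turn pins down all self-loop values and rules out spurious extra components. Once this case analysis is carried out, adjacency in $\mathcal{P}'$, and hence in $\mathcal{P}_{FM}$ via the bijection of the first step, follows.
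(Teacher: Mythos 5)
Your route is genuinely different from the paper's written proof. The paper proves the lemma self-containedly: for each configuration (a)--(f) it exhibits a cost vector $c$ such that $z$ and $y$ are the unique optimal basic solutions of $\max\{c^Tx : x \in \mathcal P_{FM}\}$, so the optimal face is precisely the edge joining them. You instead carry out the reduction that the paper only alludes to in its attribution: embed $\mathcal P_{FM}$ into the fractional perfect matching polytope of the looped graph (the same slack construction the paper uses later for the lower bound, where it also records that the embedding preserves adjacency) and then invoke the adjacency characterization of Theorem~25 of \cite{B13}, translating each case into a shape of the symmetric difference once self-loops are accounted for. Both are legitimate; the paper's argument avoids having to quote and verify the exact statement of Behrend's theorem, while yours outsources the polyhedral work and reduces the lemma to combinatorial bookkeeping.

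However, the bookkeeping for case (f) is wrong as written. You handle (e) and (f) together and assert that ``the path endpoints lie on odd cycles \dots so no additional self-loops appear,'' but in case (f) only one endpoint of $P$ lies on the odd cycle. At the other endpoint $w$, the last path edge belongs to exactly one of $\mathcal M_y, \mathcal M_z$, and since the component of $\mathcal G_y \Delta \mathcal G_z$ is exactly the cycle plus the path, $w$ must be exposed in the other vertex (any covering matching edge or cycle there would enlarge the component). Hence the self-loop at $w$ does flip, exactly as in your case (b), and it is needed: without it the degree equalities of the perfect polytope are violated at $w$, and the shape ``odd cycle with a pendant alternating path'' is not among the admissible configurations you list from Theorem~25, so as stated your case (f) does not match any admissible shape and adjacency does not follow. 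Once the loop at $w$ is added, the configuration becomes two odd cycles (the original one and the length-one loop) joined by an alternating path, and the argument goes through. Relatedly, your paraphrase of the admissible shapes includes a bare path and a bare odd cycle, which cannot arise as the symmetric difference of two vertices of a perfect polytope for precisely this degree-parity reason; stating the admissible shapes more carefully would have caught the slip in (f).
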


\begin{figure}
\begin{center} 
\includegraphics[scale=0.45]{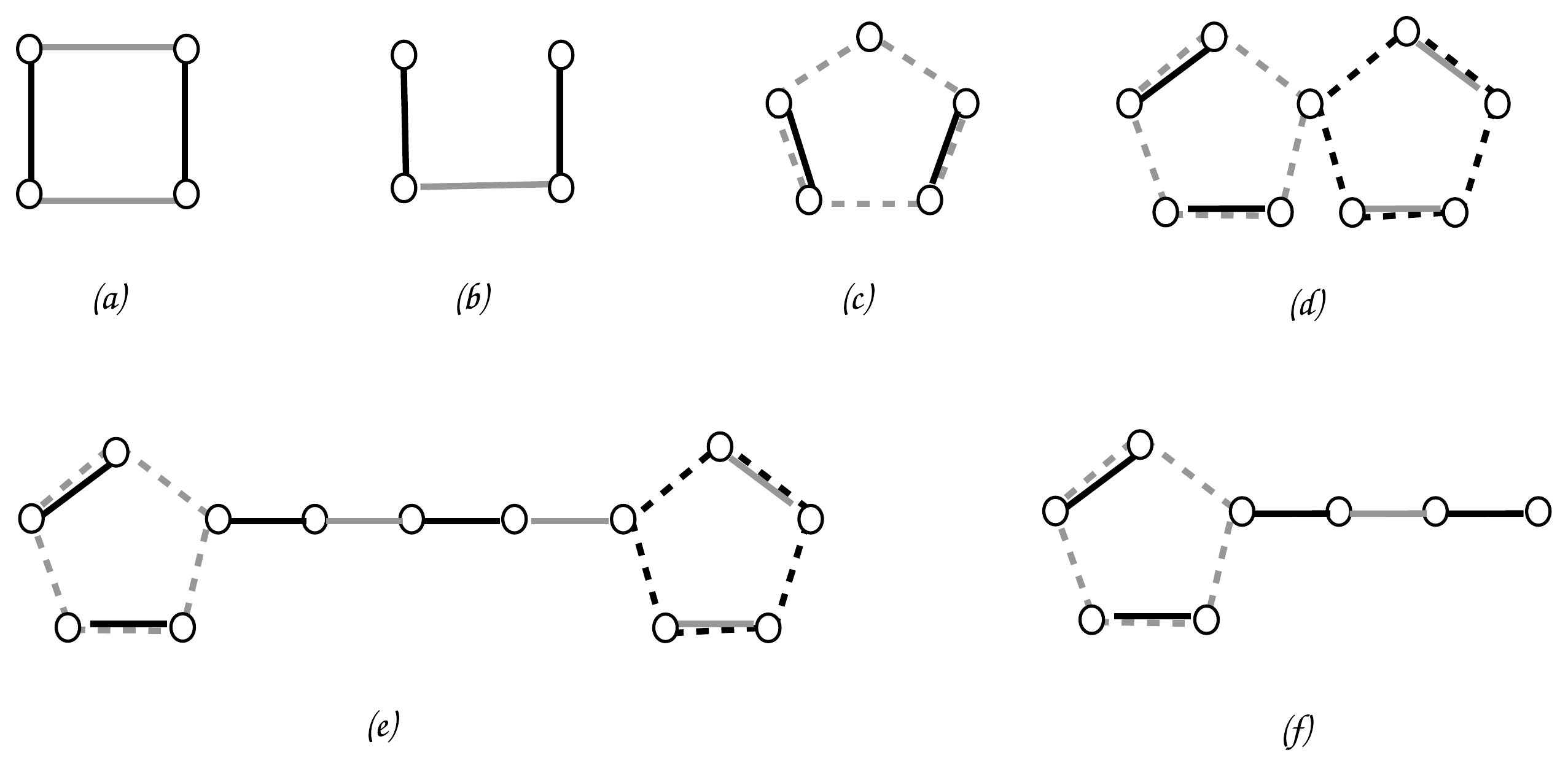}
\end{center}
\caption{Examples illustrating Lemma~\ref{cor:adjacency}. Black edges represent the edges in the support of $y$, and gray edges represent the edges in the support of $z$. Continuous lines represent edges of value 1, while dashed lines represent edges of value $\frac{1}{2}$.  
 }
\label{fig:adjacency} 
\end{figure}

Recall that we refer to the process of traversing an edge of a polytope (i.e. moving from one vertex to an adjacent one) as a \emph{move}.

%%%%%%%%%%%%%%%%%%%%%%%%%%%
%%%%%%%%%%%%%%%%%%%%%%%%%%%
%%%%%%%%%%%%%%%%%%%%%%%%%%%
%%%%%%MAIN RESULT%%%%%%%%%%%%%%
%%%%%%%%%%%%%%%%%%%%%%%%%%%
%%%%%%%%%%%%%%%%%%%%%%%%%%%
%%%%%%%%%%%%%%%%%%%%%%%%%%%

\section{A characterization of the diameter of $\mathcal P_{FM}$}
\label{sec:diameter}
In this section we give a characterization of the diameter of the fractional matching polytope by providing a proof of Theorem \ref{thm:main_result}. 
Recall that Theorem \ref{thm:main_result} states
$$diam(\mathcal P_{FM}) = \max_{x \in vert(\mathcal P_{FM})} \{ {\bf 1}^Tx + \frac{|\mathcal C_x|}{2}\}.$$

We will prove that the right-hand side of the above formula is an upper bound on the diameter of $\mathcal P_{FM}$ 
in Sections 3.1-3.6, and
then prove it is a lower bound in Section \ref{sec:lower_bound}.

\subsection{Upper bound: general strategy} 
\label{sec:upper_bound_1}
Let us recall the strategy sketched in the introduction, regarding how we are going to prove our upper bound.
Given two distinct vertices $z$ and $y$ of $\mathcal P_{FM}$, we will prove that the distance between $z$ and $y$ is bounded by 
${\bf 1}^Tw + \frac{|\mathcal C_w|}{2}$ for some vertex $w$ of $\mathcal P_{FM}$ that will be identified later.

We assign to each node in $v $ in $V(\mathcal G_w)$ (i.e. with $w(\delta(v))=1$) a token of value $\frac{1}{2}$, and to each cycle $C \in \mathcal C_w$ a token of value $\frac{1}{2}$, so that the total sum of the token values equals ${\bf 1}^T w + \frac{|\mathcal C_w|}{2}$. 
We define a path on the 1-skeleton of $\mathcal P_{FM}$ from $z$ to $y$, and show that each move along this path can be payed by using two tokens from some nodes or cycles.
Specifically, our path will have the form 
$$z \rightarrow w \rightarrow r \rightarrow y$$ 
where $r$ and $w$ are (non necessarily distinct) vertices of $\mathcal P_{FM}$ with some nice structure, and the arrow ``$\rightarrow$'' indicates a path between the corresponding vertices on the 1-skeleton of $\mathcal P_{FM}$.
While traversing our path, we will satisfy some invariants, which will be helpful for keeping track of the used tokens. 
The first set of invariants, valid for the path from $z$ to $w$, are described in the next paragraph.

\paragraph{Invariants and definitions.} For every pair of consecutive vertices $\ell, \bar \ell$ on the path $z  \rightarrow w$ (with $\bar \ell$ coming after $ \ell$ when traversing this path), the following invariant will hold: 

\begin{equation}\label{eq:invariant}
 V(\mathcal G_{\ell}) \subseteq V(\mathcal G_{\bar \ell}).
 \end{equation}

In other words, $\bar \ell$ is obtained by ``augmenting'' $\ell$. 
This implies that each node $u$ with $\ell(\delta(u))=1$ will satisfy $w(\delta(u))=1$, and therefore
$u$ has been assigned a token. It follows that we can use the tokens of the nodes of $V(\mathcal G_{\bar \ell})$ to pay
for the move from $\ell$ to $\bar \ell$. 

We also anticipate here that for every pair of consecutive vertices $\ell, \bar \ell$ on the path $z  \rightarrow w$ (with $\bar \ell$ coming after $ \ell$ when traversing this path), the following invariant will hold: 

\begin{equation}\label{eq:invariant_cycle}
 (\mathcal C_{\ell} \setminus \mathcal C_{z}) \subseteq \mathcal C_{\bar \ell}.
 \end{equation}

This implies that \emph{if} an odd cycle $C$ appears for the first time in the support of a vertex $\ell \neq z$, then $C$ will be part
of the support of $w$ as well.  This ensures that $C$ has been assigned a token, and therefore we can use its token to pay
for the move we performed to arrive to $\ell$.

In the following, we will denote by $T(\ell)$ the subset of nodes of $V(\mathcal G_{\ell})$ that still have an available token, i.e., a token that has \emph{not} been used to pay for any of the moves performed to arrive from $z$ to $\ell$. 
Furthermore, for a given vertex $\ell$, we let $\tilde M_{\ell} \subseteq \mathcal M_{\ell}$ be the subset of edges of $\mathcal M_{\ell}$ with both endpoints not in $T(\ell)$, i.e. satisfying: $\{u,v\} \in \tilde M_{\ell} \Leftrightarrow \{u,v\} \in \mathcal M_{\ell} \mbox{ and } u,v \notin T(\ell)$. 

On our path from $z$ to $w$, we will satisfy two additional invariants.
The first one states that the set of edges whose endpoints have no tokens 
are a subset of the edges of $\mathcal G_y$:
\begin{equation}\label{eq:invariant2}
 \tilde M_{\ell} \subseteq E(\mathcal G_y)
 \end{equation}

The second one states that the nodes that used their tokens either can be ``paired up'' using edges of $\tilde M_{\ell}$,
or they  belong to cycles in $\mathcal C_{\ell} \cap \mathcal C_y$:

\begin{equation}\label{eq:invariant3}
 \mbox{ for each } v \in V(\mathcal G_{\ell}) \setminus V(\mathcal{C}_{\ell} \cap \mathcal{C}_y),  \mbox{ we have } v \in T(\ell) \Leftrightarrow \mbox{ $v$ is not an endpoint of an edge in }  \tilde M_{\ell}
 \end{equation}

\subsection{Moving from $z$ to $w$}
The vertex $w$ is obtained by ``augmenting'' $z$ (if possible) using edges of $\mathcal G_y$. 
This is done in three main steps. To describe the first one, we need to introduce a definition (refer to Figure~\ref{fig:critical}(a)).

\begin{definition}
\label{def:critical}
Let $\ell$ be a vertex, and $C$ be a cycle in $\mathcal C_y \setminus \mathcal C_{\ell}$,
such that $|V(C) \cap T(\ell)| \geq 2$. We say that the cycle  $C$ is \emph{critical} for $\ell$
if every odd path $P \subseteq C$ with nodes $\{v_1, \dots, v_k\}$ such that $V(P) \cap T(\ell) =\{v_1,v_k\}$, 
satisfies 
\begin{itemize}
\item[(i)] $|E(P)| \geq 3$, and
\item[(ii)] either $\{v_1,v_k\} \in \mathcal M_{\ell}$, or  $v_1,v_k \in V(\bar C)$ for some $\bar C \in \mathcal C_{\ell}$.
\end{itemize}
\end{definition}

Algorithm 1 describes the moves we perform to arrive to the vertex $w$, starting from $z$, and proceeds as follows. 
In Step 2 we consider the current vertex $\ell$ (with $\ell :=z$ at the beginning),
and we look for a critical cycle $C \in \mathcal C_{y}\setminus \mathcal C_{\ell}$. Given such a cycle, we perform a move which increases the number of nodes covered by $\mathcal M_{\ell}$ (see Figure~\ref{fig:critical}(b)). 
In Step 3 we look for $\tilde M_{\ell}$-augmenting paths whose edges are in $E(\mathcal G_y)$
and whose endpoints are  not in $V(\mathcal G_{\ell})$. If such a path is identified, we perform a move which again increases the number of nodes covered by $\mathcal M_{\ell}$.
In Step 4 we look for a cycle $C \in \mathcal C_y$, which is packed by $\ell$, and has a node $v \in V(C)$
which is not in $V(\mathcal G_{\ell})$. If such a cycle is identified, we perform a move which increases the number of cycles in $\mathcal C_{\ell} \cap \mathcal C_y$.

\begin{figure}[H]
\begin{center} 
\includegraphics[angle=0, width=8cm, height=5cm]{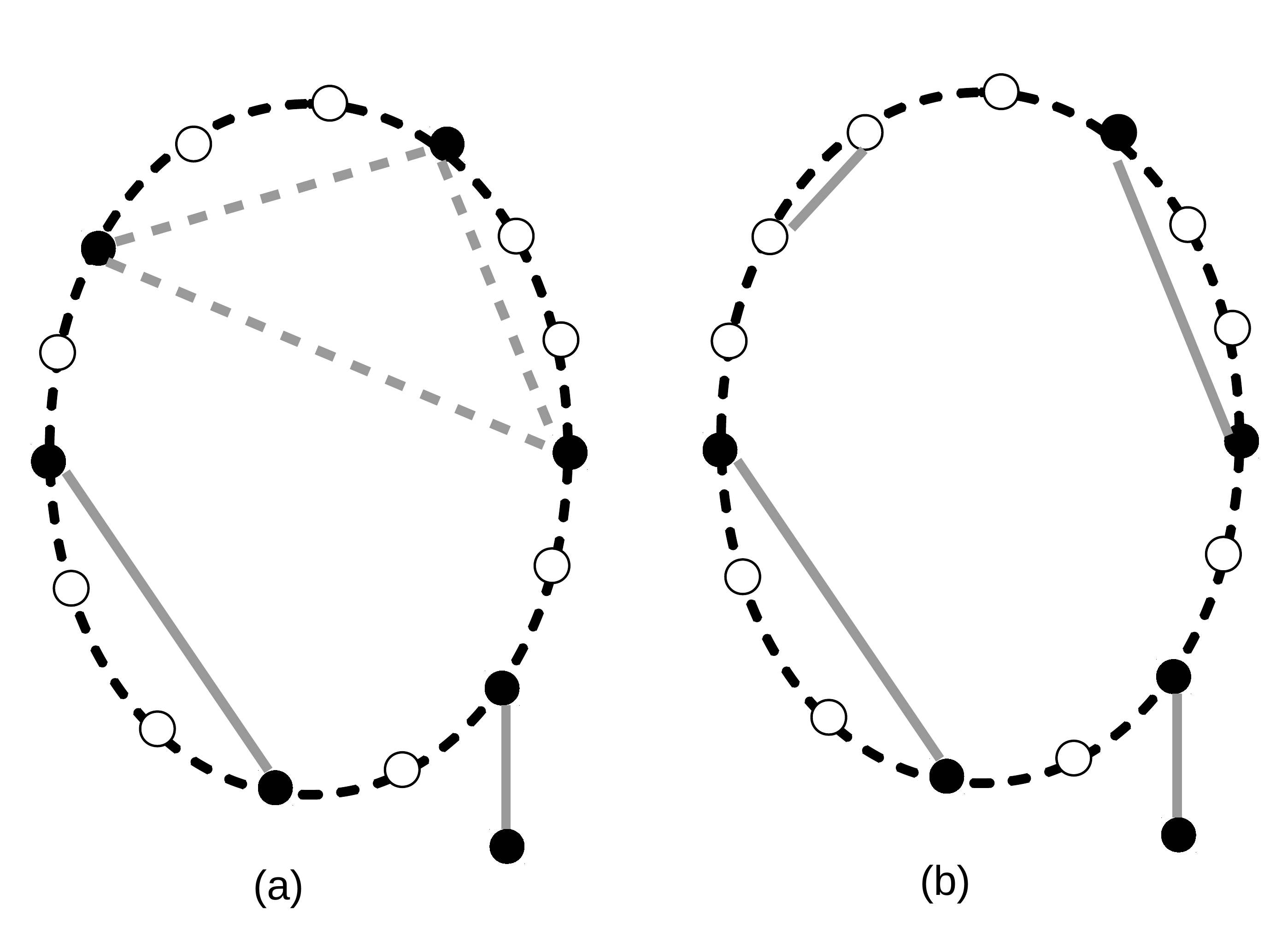}
\end{center}
\caption{A critical cycle is shown in (a). Black edges represent the edges in the support of $y$, and gray edges represent the edges in the support of $\ell$. Continuous lines represent edges of value 1, while dashed lines represent edges of value $\frac{1}{2}$. Nodes in $T(\ell)$ are colored black.
The cycle contains three odd paths whose endpoints are black, and internal nodes are white. All these three paths
satisfy conditions $(i)$ and $(ii)$ of Definition \ref{def:critical}.
The figure in (b) shows how the coordinates of $\ell$ change after performing one move ($\triangleleft$). 
 }
\label{fig:critical} 
\end{figure}

\vspace*{.2cm}
\noindent
{\bf Algorithm 1 (from $z$ to $w$):} \hrulefill\\
1. Set $\ell := z$. 

\vspace*{.1cm}
\noindent
2. While there is a cycle $C \in \mathcal C_{y}\setminus \mathcal C_{\ell}$ that is critical for $\ell$:

\vspace{.2cm}
\hspace{.3cm} 2.1. Let $P \subset C$ be an odd path with nodes $\{v_1, \dots, v_k\}$, such that $V(P) \cap T(\ell) =\{v_1,v_k\}$,

\vspace{.2cm}
\hspace{.3cm} 2.2. If $\{v_1,v_k\} \in \mathcal M_{\ell}$, do the following move:

\vspace{.2cm}
\hspace*{.3cm}
($\triangleright$) Change the $\ell$-coordinate of the edges $\{v_1,v_2\}, \{v_1,v_k\}, \{v_k, v_{k-1}\}$ by setting
\begin{equation*}
\ell_e =
\begin{cases}
1 & \text{if } e = \{v_1,v_2\}  \mbox{ or } e = \{v_k, v_{k-1}\}\\
0 & \text{if } e = \{v_1, v_k\} \\
\end{cases}
\end{equation*}

\hspace{.3cm} and let the nodes $v_2$ and $v_1$ use their tokens to pay for this move;

\vspace{.2cm}
\hspace{.3cm} 2.3. If $v_1,v_k \in V(\bar C)$ for some $\bar C \in \mathcal C_{\ell}\setminus \mathcal C_y$ with $V(\bar C)=\{u_1:=v_1, u_2, \dots, u_{\bar k}\}$, do the move:

\vspace{.2cm}
\hspace*{.3cm}
($\triangleleft$) Change the coordinate $\ell_e$ of all the edges $e \in E(\bar C) \cup \{v_1,v_2\}$ by setting
\begin{equation*}
\ell_e =
\begin{cases}
1 & \text{if } e = \{v_1,v_2\} \\
1 & \text{if } e = \{u_{2i}, u_{2i+1}\} \mbox{ for $ i \in \{1, \dots, \frac{\bar k -1}{2}$\}} \\
0 & \text{if } e = \{u_{2i-1}, u_{2i}\} \mbox{ for $ i \in \{1, \dots, \frac{\bar k -1}{2}$\}}\\
0 & \text{if } e = \{v_{\bar k}, v_1\} \\
\end{cases}
\end{equation*}

\hspace{.3cm} and let the nodes $v_2$ and $v_1$ use their tokens to pay for this move;

\vspace*{.1cm}
\noindent
3. While there is an $\tilde M_{\ell}$-augmenting path $P$ in $\mathcal G_y$, with endpoints $u,v \notin V(\mathcal G_{\ell})$, do the following move:

\vspace{.2cm}
\hspace*{.3cm}
($\circ$) Change the coordinate $\ell_e$ of all the edges of $P$ by setting
\begin{equation*}
\ell_e =
\begin{cases}
1 & \text{if } e \in E(P)\setminus \tilde M_{\ell} \\
0 & \text{if } e \in E(P)\cap  \tilde M_{\ell}  \\
\end{cases}
\end{equation*}
\hspace{.3cm} and let the nodes $u$ and $v$ use their tokens to pay for this move;

\vspace*{.1cm}
\noindent
4. While there is a cycle $C \in \mathcal C_y$ that (i) is packed by $\ell$, and (ii) contains a node $v \in V(C)$ with 
$v \notin \mathcal V(\mathcal G_{\ell})$  do the following move:

\vspace{.2cm}
\hspace*{.3cm}
($\diamond$) Change the coordinates $\ell_e$ of all the edges $e \in E(C)$ to $\frac{1}{2}$,

\vspace{.2cm}
\hspace{.3cm} and let the cycle $C$ and the node $v$ use their tokens to pay for this move.

\vspace*{.1cm}
\noindent
5. Output $w:= \ell$.

\vspace*{-.2cm}
\noindent
\hrulefill

\bigskip
\begin{lemma}
All steps of Algorithm 1 can be correctly performed, and invariants (\ref{eq:invariant}), (\ref{eq:invariant_cycle}), 
 (\ref{eq:invariant2}), and (\ref{eq:invariant3}) are maintained.
\end{lemma}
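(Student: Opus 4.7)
The plan is by induction on the number of moves performed by Algorithm~1. The base case $\ell=z$ is immediate: no move has been made, so $T(z)=V(\mathcal G_z)$, $\tilde M_z=\emptyset$, and $\mathcal C_z\setminus \mathcal C_z=\emptyset$, making all four invariants hold vacuously or by definition. For the inductive step I would take each of the four move types $(\triangleright),(\triangleleft),(\circ),(\diamond)$ in turn and establish four things: that the move is well-defined (in particular, that the new edge-values respect $\bar\ell(\delta(v))\le 1$ at every node), so that $\bar\ell$ is a vertex of $\mathcal P_{FM}$ by Theorem~\ref{thm:balinski}; that $\ell$ and $\bar\ell$ are adjacent on the 1-skeleton; that the two tokens charged by the move are still available; and that each of the four invariants carries over to $\bar\ell$.

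The adjacency claim is handled by direct pattern matching against Lemma~\ref{cor:adjacency}. Move $(\triangleright)$ falls in case (b): $\mathcal G_\ell \Delta \mathcal G_{\bar\ell}$ consists of the single alternating path $v_2\,v_1\,v_k\,v_{k-1}\subseteq \mathcal M_\ell \Delta \mathcal M_{\bar\ell}$. Move $(\triangleleft)$ falls in case (f): the symmetric difference is the odd cycle $\bar C$ (which becomes packed by $\mathcal M_{\bar\ell}$) together with the pendant matching edge $\{v_1,v_2\}$ attached at $v_1$. Move $(\circ)$ is case (b) with the $\tilde M_\ell$-augmenting path $P$ itself. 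Move $(\diamond)$ is case (c), witnessed by the new odd cycle $C \in \mathcal C_{\bar\ell}(\ell)$. Invariant (\ref{eq:invariant}) is immediate because no move removes a node from the support. Invariant (\ref{eq:invariant_cycle}) holds because the only half-integral cycle ever destroyed, $\bar C$ in $(\triangleleft)$, must lie in $\mathcal C_z$ at the time of its removal: Step~4 (the only producer of new half-integral cycles) runs strictly after Step~2 terminates. Invariant (\ref{eq:invariant2}) is preserved because every matching edge newly added belongs to $E(\mathcal G_y)$ by construction: a cycle edge of $C\in \mathcal C_y$ in $(\triangleright)$ and $(\triangleleft)$, and an edge of the $\mathcal G_y$-path $P$ in $(\circ)$.

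The delicate part, which I expect to be the main obstacle, is the joint handling of well-definedness, token availability, and invariant (\ref{eq:invariant3}). The pivotal structural observation is: for any $C \in \mathcal C_y \setminus \mathcal C_\ell$ and any node $u \in V(C) \cap V(\mathcal G_\ell)$ with $u \notin T(\ell)$, the node $u$ is $\tilde M_\ell$-matched via one of the two cycle edges of $C$ at $u$. This follows from invariant (\ref{eq:invariant3}) -- noting that $u \notin V(\mathcal C_\ell \cap \mathcal C_y)$ since $\mathcal C_y$ is a node-disjoint union of cycles -- combined with invariant (\ref{eq:invariant2}) and the fact that $y(\delta(u))=1$ is already saturated by the two half-edges of $C$ incident to $u$. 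Applied to the path $P$ of $(\triangleright)$ or $(\triangleleft)$, this observation together with the criticality condition of Definition~\ref{def:critical} and a suitable choice of $P$ in Step~2.1 forces the interior nodes $v_2$ and $v_{k-1}$ to lie outside $V(\mathcal G_\ell)$; otherwise one could extract a sub-path of $C$ between two nodes of $T(\ell)$ violating one of conditions (i),(ii) of Definition~\ref{def:critical}, contradicting criticality. The move then makes sense at $v_2,v_{k-1}$, and since these nodes enter $V(\mathcal G_{\bar\ell})\subseteq V(\mathcal G_w)$ for the first time, their tokens in the $w$-allocation are certified unused. Analogous local arguments locate the two fresh tokens in $(\circ)$ (the augmenting-path endpoints $u,v$ that are explicitly assumed to be outside $V(\mathcal G_\ell)$) and in $(\diamond)$ (the newly included node $v$ and the newly created cycle $C \in \mathcal C_{\bar\ell}\cap \mathcal C_y$, which did not exist in $\mathcal C_\ell$). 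Maintenance of invariant (\ref{eq:invariant3}) then reduces, in each move, to a direct case check: every node that just spent a token becomes an endpoint of a $\tilde M_{\bar\ell}$-edge (or joins $V(\mathcal C_{\bar\ell}\cap \mathcal C_y)$ in $(\diamond)$), while every node that did not spend a token remains in $T(\bar\ell)$ and is not incident to any $\tilde M_{\bar\ell}$-edge.
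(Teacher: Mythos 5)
Most of your outline matches the paper's proof (the same case-matching against Lemma~\ref{cor:adjacency}, the same easy treatment of invariants (\ref{eq:invariant}), (\ref{eq:invariant_cycle}), (\ref{eq:invariant2}), and the same token accounting for $(\circ)$ and $(\diamond)$). The gap is exactly at the step you flag as delicate: showing that in the moves $(\triangleright)$ and $(\triangleleft)$ the nodes $v_2$ and $v_{k-1}$ lie outside $V(\mathcal G_\ell)$. Your argument for this is a \emph{current-state} deduction: from your structural observation plus criticality (Definition~\ref{def:critical}) and ``a suitable choice of $P$'', claiming that otherwise one could extract a sub-path of $C$ between two token nodes violating (i) or (ii). That inference is not valid. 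Consider a $7$-cycle $C=v_1v_2\dots v_7\in\mathcal C_y$ with $v_1,v_4\in T(\ell)$, $\{v_1,v_4\}\in\mathcal M_\ell$, $\{v_2,v_3\},\{v_5,v_6\}\in\tilde M_\ell$ and $v_7\notin V(\mathcal G_\ell)$: all four invariants hold, $C$ is critical (the only odd token-bounded path is $v_1v_2v_3v_4$, which has $3$ edges and endpoints matched to each other), yet the unique admissible $P$ has $v_2\in V(\mathcal G_\ell)$, and the move $(\triangleright)$ would give $v_2$ degree $2$, i.e.\ not even a point of $\mathcal P_{FM}$. So no choice of $P$ rescues you, and no violating sub-path exists to contradict criticality; the stated invariants together with criticality simply do not exclude this configuration. (Also note the algorithm takes an \emph{arbitrary} odd path in Step 2.1, so an argument that only works for a specially chosen $P$ would in any case require modifying the algorithm.)

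What is actually needed -- and what the paper does -- is an argument about the \emph{history} of the run, showing that such a configuration can never arise. If $v_2\in V(\mathcal G_\ell)$, then (as you correctly derive) $\{v_2,v_3\}\in\tilde M_\ell$; the paper then traces this edge back to the Step-2 iteration at which it entered the support. At that earlier vertex $\bar\ell$ the cycle $C$ (the same cycle, by node-disjointness of $\mathcal C_y$) was critical, and the edge was added as an end-edge of the path chosen then, so one of $v_2,v_3$ was in $T(\bar\ell)$. It cannot be $v_2$, since the single edge $\{v_1,v_2\}$ would violate condition (i) of criticality at $\bar\ell$ (tokens only disappear over time, so $v_1\in T(\bar\ell)$); hence $v_3\in T(\bar\ell)$, and the path chosen at $\bar\ell$ would then be $v_3v_2v_1$, which has two edges, contradicting that it was odd. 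This trace-back-and-parity argument is the crux of the paper's proof of the lemma and is absent from your proposal; without it (or some strengthened invariant playing the same role), the well-definedness of the Step-2 moves, and hence the lemma, is not established.
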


\begin{proof}
We first prove that Step 2 of algorithm 1 can be correctly performed and maintains all the invariants, by induction on the number of moves executed. 

Let $C$ be a critical cycle for the vertex $\ell$, and $P$ be the corresponding path identified in Step 2.1.
First, suppose that $\{v_1, v_k\} \in \mathcal M_{\ell}$ and let us focus on the move ($\triangleright$).  
We claim that the edges $\{v_1,v_2\}, \{v_1,v_k\}, \{v_k,v_{k-1}\}$ form an 
$\mathcal M_{\ell}$-augmenting path, and therefore the move is a valid move, according to Lemma \ref{cor:adjacency}(b). 
To see this, note first that $v_{k-1} \neq v_2$, since by definition $P$ has at least 3 edges, and so $k\geq 4$. 
Second, we claim that both $v_2$ and $v_{k-1}$ are not in $V(\mathcal G_{\ell})$. 
Assume for a contradiction that $v_2 \in V(\mathcal G_{\ell})$ (the other case is similar).
Since $C$ is critical, we know that $v_2 \notin T(\ell)$, and therefore by invariants (\ref{eq:invariant2}) and (\ref{eq:invariant3}) (which hold by the inductive hypothesis),
$\{v_2,v_3\} \in \tilde M_{\ell}$. Let $\ell'$ be the iteration where $\{v_2,v_3\}$ appears
for the first time in the support of a vertex, and $ \bar \ell$ be the vertex visited by the algorithm
immediately before $\ell'$.  Then, necessarily $C$ was critical for $\bar \ell$ and one of the two nodes
$v_2,v_3$ was in $T( \bar \ell)$. However, if $v_2 \in T( \bar \ell)$, then the path with one single edge $\{v_1,v_2\}$ would contradict
the fact that $C$ was critical for $ \bar \ell$. It follows that $v_3 \in T( \bar \ell)$. 
However, the path $\bar P$ identified at Step 2.1 of the algorithm
has to contain the edge $\{v_2,v_3\}$, and its endpoints are $v_3$ and another node in $T(\bar \ell)$.  
Necessarily, the other endpoint of $\bar P$ is then $v_1$. However, this contradicts the fact that $\bar P$ is odd.
It follows that $\{v_1,v_2\}, \{v_1,v_k\}, \{v_k,v_{k-1}\}$ form an 
$\mathcal M_{\ell}$-augmenting path, and it is easy to see then that
the move ($\triangleright$) maintains the claimed invariants.

Now, suppose that $\{v_1, v_k\} \in V(\bar C)$ for some $\bar C \in \mathcal C_{\ell}\setminus \mathcal C_y$ and let us focus on the move ($\triangleleft$). We claim that $v_2 \notin V(\mathcal G_{\ell})$, and therefore the edge $\{v_1,v_2\}$ and the cycle $\bar C$ have the structure
described in Lemma \ref{cor:adjacency}(f), which allows us to perform a valid move. 
The argument to see this is identical to the one used in the previous point.  
Assume for a contradiction that $v_2 \in V(\mathcal G_{\ell})$.
Since $C$ is critical, we know that $v_2 \notin T(\ell)$, and therefore by invariants (\ref{eq:invariant2}) and (\ref{eq:invariant3}) (which hold by the inductive hypothesis),
$\{v_2,v_3\} \in \tilde M_{\ell}$. Let $\ell'$ be the iteration where $\{v_2,v_3\}$ appears
for the first time in the support of a vertex, and $ \bar \ell$ the vertex visited by the algorithm
immediately before $\ell'$.  Then, necessarily $C$ was critical for $\bar \ell$ and one of the two nodes
$v_2,v_3$ was in $T( \bar \ell)$. However, if $v_2 \in T( \bar \ell)$, then the path with one single edge $\{v_1,v_2\}$ would contradict
the fact that $C$ was critical for $ \bar \ell$. It follows that $v_3 \in T( \bar \ell)$. 
However, the path $\bar P$ identified at Step 2.1 of the algorithm
has then $v_3$ and $v_1$ as its endpoints, but this contradicts the fact that $\bar P$ is odd.
Once again, it is immediate to see that
the move ($\triangleright$) maintains the claimed invariants.

We now argue that the other steps of algorithm 1 can be correctly performed and maintain all the invariants.
Each move in ($\circ$) and ($\diamond$) is indeed a valid move, according to Lemma \ref{cor:adjacency}(b) and Lemma \ref{cor:adjacency}(c), respectively.  It is easy to see that the all invariants are maintained in these steps. 
Furthermore, by invariants (\ref{eq:invariant}) and (\ref{eq:invariant_cycle}), each node $v$ and each cycle $C$ that used a token during the execution of Algorithm 1, satisfy $v \in V(\mathcal G_w)$ and $C \in \mathcal C_w$, and therefore had indeed a token to use.
\end{proof}

We state a trivial observation which will be used later.

\begin{obs}
\label{obs:cycles}
Let $C$  be a cycle in $\mathcal C_{w}\setminus \mathcal C_y$. Then
$C$ did not use its token during the execution of Algorithm 1.
\end{obs}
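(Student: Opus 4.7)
The plan is to proceed by a direct case analysis of the four types of moves carried out in Algorithm~1. Observe that tokens are spent only in the moves ($\triangleright$), ($\triangleleft$), ($\circ$), and ($\diamond$), since these are the only places where the algorithm explicitly transfers a token. First I would go through ($\triangleright$), ($\triangleleft$), and ($\circ$) and note, simply by reading the description in Steps 2.2, 2.3, and 3, that the only tokens used by these moves are \emph{node} tokens (belonging to the nodes $v_1$ and $v_2$, or the path endpoints $u,v$). Hence none of these three move types can ever spend a cycle token.

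Next I would consider the move ($\diamond$) in Step 4, which is the unique place in Algorithm~1 where a cycle token is ever used. By the condition in Step 4, the cycle whose token is spent is explicitly chosen from $\mathcal{C}_y$. Therefore every cycle whose token has been consumed during the run of Algorithm~1 must belong to $\mathcal{C}_y$. Moreover, after executing ($\diamond$) on such a cycle $C$, invariant~(\ref{eq:invariant_cycle}) guarantees $C \in \mathcal{C}_w$, so the token assignment is consistent (confirming that $C$ indeed had a token to spend).

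Taking the contrapositive, any cycle $C \in \mathcal{C}_w \setminus \mathcal{C}_y$ cannot have spent its token, which is exactly the statement of the observation. I do not expect any genuine obstacle here: the argument is a one-line inspection of the algorithm, and the main purpose of stating it explicitly is to record, for later use, that the tokens of cycles in $\mathcal{C}_w \setminus \mathcal{C}_y$ remain available after the completion of Algorithm~1.
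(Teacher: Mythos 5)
Your proof is correct and matches the paper's reasoning: the paper treats this as a trivial observation, resting on exactly the inspection you carry out, namely that the only move spending a cycle token is ($\diamond$) in Step 4, whose cycle is drawn from $\mathcal{C}_y$, so no cycle in $\mathcal{C}_w \setminus \mathcal{C}_y$ ever pays. Nothing further is needed.
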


Before describing the subsequent moves on our path from $w$ to $y$, we need to introduce an important notion: namely,
the notion of \emph{witnesses} of a cycle $C \in \mathcal C_y \setminus \mathcal C_{w}$. This notion will be crucial to identify the nodes
that will pay for the move in which the cycle $C$ appears for the first time in the support of a vertex on our path from  $w$ to $y$.

\subsection{Witnesses}

\begin{definition}
\label{def:witness1}
Let $u$ be a node of a cycle $C \in (\mathcal C_y\setminus \mathcal C_w)$.
We say that $u$ is a \emph{single witness} of $C$ if $u \in T(w)$, $u$ is $(\mathcal M_{w} \cap E(C)$)-exposed, and $C$ is packed by $w$.
\end{definition}

\begin{definition}
\label{def:witness2}
Let $(u,v)$ be a pair of nodes of a cycle $C \in  (\mathcal C_y\setminus \mathcal {C}_{w})$. We say that $(u,v)$ is a \emph{pair of witnesses} of $C$ if $u,v \in T(w)$ and there is an $\tilde M_{w}$-augmenting path 
 $Q(u,v)$ with endpoints $u$ and $v$ in $E(C)$.
\end{definition}

Note that 
%the above definitions do not imply uniqueness: for a cycle $C$, there could be many (pairs of) nodes satisfying the conditions
%of Definition \ref{def:witness1} and/or Definition \ref{def:witness2}. However, 
for a given pair of witnesses $(u,v)$, the path $Q(u,v)$ is uniquely defined
(since exactly one $u$-$v$ path in $E(C)$ has odd length). 

As already mentioned, we would like that the witnesses of a cycle $C$ pay for the move in which the cycle $C$ appears for the first time in the support of a vertex on our path from the vertex $w$ to the vertex $y$. 
Unfortunately, it might be possible that if we do not select our witnesses carefully, we do not have enough tokens to perform our moves properly. 
We therefore impose some restrictions on our choice.

\begin{definition}
\label{def:set_witnesses}
We say that $\mathcal W \subseteq V \times V$ is a \emph{good set of witnesses} if it satisfies
\begin{itemize}
\item[(i)] for all $C \in \mathcal (C_y\setminus \mathcal {C}_{w})$, the following holds:  $\mathcal W$ contains either exactly one pair of witnesses $(u,v)$ of $C$, or exactly one single witness of $C$ (indicated as $(u,u) \in \mathcal W$);  
\item[(ii)] for every $(u,v) \in \mathcal W$ with $u\neq v$, the following holds: if $|E(Q(u,v))| >1$, then $u$ and $v$ belong to two distinct components of $\mathcal G_{w}$. 
\end{itemize}
\end{definition}

In other words, condition $(ii)$ of the above definition states that if $Q(u,v)$ has more than one edge,
then $\{u,v\} \notin \mathcal M_{w}$, and there is no cycle $\bar C \in \mathcal C_{w}$ such that $V(\bar C)$ contains both $u$ and $v$.
Next lemma shows that there exists a good set of witnesses.

\begin{lemma}
\label{lem:initial_witnesses}
There exists a good set of witnesses $\mathcal W$.
\end{lemma}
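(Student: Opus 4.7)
I would construct $\mathcal W$ cycle by cycle: for each $C \in \mathcal C_y \setminus \mathcal C_w$ I would exhibit either a single witness of $C$ (when $C$ is packed by $w$) or a pair of witnesses meeting condition~(ii) of Definition~\ref{def:set_witnesses} (when $C$ is not packed by $w$). Taking the union of these choices yields $\mathcal W$.

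The packed case is short. The termination of Step~4 of Algorithm~1 forces $V(C) \subseteq V(\mathcal G_w)$, so the unique $(\mathcal M_w \cap E(C))$-exposed node $u$ of $C$ lies in $V(\mathcal G_w)$. Since $y(\delta(u)) = 1$ is already saturated by the two half-valued edges of $C$ incident to $u$, no $\tilde M_w$-edge can touch $u$ outside $E(C)$ (using invariant~(\ref{eq:invariant2})); combined with the fact that $u$ is $(\mathcal M_w \cap E(C))$-exposed, this shows $u$ is not an endpoint of any $\tilde M_w$-edge at all. Invariant~(\ref{eq:invariant3}) then gives $u \in T(w)$, making $u$ a single witness.

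The non-packed case is the main work. I would classify each $c \in V(C)$ as \emph{white} ($c \in T(w)$), \emph{gray} ($c \in V(\mathcal G_w) \setminus T(w)$) or \emph{black} ($c \notin V(\mathcal G_w)$). The same $y$-degree argument as in the packed case shows every gray node of $V(C)$ is $\tilde M_w$-matched to an adjacent gray node of $C$, so grays come in $C$-adjacent pairs joined by ``type-M'' $\tilde M_w$-edges of $C$, and every other edge of $C$ is ``type-N''. I would first show $|V(C) \cap T(w)| \geq 2$: having zero whites with exactly one black forces $|\mathcal M_w \cap E(C)|$ to equal $(|V(C)|-1)/2$, i.e.\ $C$ is packed by $w$, a contradiction; while zero whites with $\geq 3$ blacks, and the analogous case of exactly one white (where non-packedness forces $\geq 2$ blacks in the arc avoiding the unique white), both produce an $\tilde M_w$-augmenting sub-path of $C$ between two blacks, contradicting the termination of Step~3.

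Given $|V(C) \cap T(w)| \geq 2$, I would analyze the consecutive-white arcs of $C$ (sub-paths whose endpoints are cyclically consecutive whites and whose internal nodes are gray or black). Since gray pairs stay inside an arc, each arc has an even number of grays, and therefore the parity of the arc length equals the number of its blacks plus~$1$. An arc containing $\geq 2$ blacks would again yield an $\tilde M_w$-augmenting path in $E(C)$ between two of them (taking two consecutive-in-arc blacks, whose internal paired grays form an alternating chain), contradicting Step~3; hence every odd consecutive-white arc has zero blacks, so its internal nodes are all gray and it is itself an $\tilde M_w$-augmenting path in $E(C)$ with white endpoints. Because the arc lengths sum to $|E(C)|$, which is odd, at least one such odd all-gray arc exists. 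Finally, the termination of Step~2 says $C$ is not critical for $w$, so some odd path $P \subseteq C$ with $V(P) \cap T(w)$ equal to its endpoints violates condition~(i) or~(ii) of Definition~\ref{def:critical}; the odd paths with this property are precisely the odd consecutive-white arcs just classified, so we may pick $P$ to be such an arc violating (i) or (ii). If (i) fails then $|E(P)| = 1$ and condition~(ii) of Definition~\ref{def:set_witnesses} is vacuous, while if (ii) fails then the two endpoints of $P$ lie in distinct components of $\mathcal G_w$, yielding a valid pair of witnesses. The main obstacle is exactly this last combination step: one must ensure that the same $P$ is simultaneously $\tilde M_w$-augmenting (via Step~3 and a parity count on the consecutive-white arcs) and compatible with the component-separation condition (via the non-criticality granted by Step~2).
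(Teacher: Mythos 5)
Your construction is essentially the paper's proof, reorganized: the packed case is handled via the termination condition of Step~4 together with invariants (\ref{eq:invariant2})--(\ref{eq:invariant3}); the bound $|V(C)\cap T(w)|\geq 2$ and the structure of the token-free nodes come from the termination of Step~3 plus parity (your white/gray/black arcs play exactly the role of the components of $C\setminus H$ in the paper's argument); and condition (ii) of Definition~\ref{def:set_witnesses} is extracted from non-criticality of $C$, i.e.\ from Step~2, just as in the paper's final paragraph. All of these steps check out, including the translation ``violation of (i) of Definition~\ref{def:critical} $\Rightarrow$ $|E(Q(u,v))|=1$'' and ``violation of (ii) $\Rightarrow$ endpoints in distinct components of $\mathcal G_w$''.

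The one genuine gap is the sentence ``the termination of Step~2 says $C$ is not critical for $w$.'' The exit condition of Step~2 only guarantees that no cycle of $\mathcal C_y\setminus\mathcal C_{\bar\ell}$ is critical for $\bar\ell$, where $\bar\ell$ is the vertex current when the Step-2 loop ends; the vertex then changes during Steps~3 and~4, and criticality is defined relative to $T(\ell)$, $\mathcal M_{\ell}$ and $\mathcal C_{\ell}$, the latter two of which do change (moves $(\circ)$ add matching edges, moves $(\diamond)$ add cycles and delete matching edges). So non-criticality at $w$ is not a restatement of the loop's exit condition; it needs a bridging argument, which the paper supplies and you omit. The fix: $T(\cdot)$ is unchanged throughout Steps~3 and~4 (every node entering the support immediately spends its token), so the family of candidate odd paths $P\subseteq C$ whose intersection with $T$ is exactly their two endpoints is the same at $\bar\ell$ and at $w$; condition (i) is static; and a path violating condition (ii) at $\bar\ell$ still violates it at $w$, because the matching edges created in Step~3 have no endpoint in $T$, while the cycles created in Step~4 are cycles of $\mathcal C_y$ distinct from $C$ and hence node-disjoint from $C$ (deletions of matching edges can only preserve the violation). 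With that observation inserted, your combination step --- picking the violating path, noting it must be an odd all-gray arc and hence $\tilde M_w$-augmenting, and reading off either a length-one $Q(u,v)$ or endpoints in distinct components of $\mathcal G_w$ --- is sound, and the whole proof matches the paper's.
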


\begin{proof} 
It is enough to show that for every $C \in \mathcal (C_y\setminus \mathcal {C}_{w})$, we can find either a single witness, or a pair of witnesses which satisfies the condition in $(ii)$.

Let $C$ be any cycle in $\mathcal C_y\setminus \mathcal C_{w}$. By definition, each edge $e \in \tilde M_{w}$ has its endpoints not in $T(w)$. Therefore, if there is an
edge $\{u,v\} \in E(C)$ such that $u,v \in T(w)$ then the edge $\{u,v\}$ is clearly an 
 $\tilde M_{w}$-augmenting path from $u$ to $v$, and $(u,v)$ are a pair of witnesses for $C$ which 
 satisfies the condition in $(ii)$.

Let us assume there is no edge $\{u,v\} \in E(C)$ such that $u,v \in T(w)$. 
Consider the set $H:=\{v \in V(C): v \in T(w)\}$. 
First, we are going to show that $H \neq \emptyset$.
Suppose otherwise. Then, by invariant (\ref{eq:invariant3}), every node in $v \in V(C)$ satisfies either
$w(\delta(v))=0$, or $w(\delta(v))=1$ and $v$ is an endpoint of an edge in $\tilde M_{w}$.
Note that, by invariant (\ref{eq:invariant2}), if $v$ is an endpoint
of an edge $\{v,\bar v\}$ in $\tilde M_{w}$, then $\bar v$ is also a node of $C$. 
Since $|V(C)|$ is odd but $\tilde M_{w}$ is a matching, it follows that $C$ contains either
an $\tilde M_{w}$-augmenting path whose endpoints are not in $V(\mathcal G_{w})$, or
$C$ is packed by $\mathcal M_{w}$ and one node of $C$ is not in $V(\mathcal G_{w})$. In both cases, Algorithm 1 would not have stopped, a contradiction.

It follows that $H \neq \emptyset$. Consider the connected components $C_1, \dots, C_k$ in the graph $C\setminus H$. Since the nodes in $H$ are pairwise not adjacent in $E(C)$, and $C$ is a cycle, we have that $|V(C)| = |H| + \sum_{i=1}^k |V(C_i)| = k + \sum_{i=1}^k |V(C_i)|$. Since $|V(C)|$ is odd, at least one component $C_i$ has $|V(C_i)|$ equal to an even number. It follows that $C_i$ is a path with an odd number of edges. By Step 3 of Algorithm 1, $\tilde M_{w}$ has to contain a maximum matching of this component. 
It follows that $C_i$ is an $\tilde M_{w}$-alternating path with the first and the last edge belonging to $\tilde M_{w}$.
Let $u$ (resp. $v$) be the node in $H$ adjacent to the first (resp. last) edge of the path $C_i$ in the graph $C$. 
If $u=v$ (i.e. $k=1$), then $C$ is packed by $w$, and
$u$ is a single witness for $C$.
 If instead $u \neq v$, then $u, C_i, v$ yield an $\tilde M_{w}$-augmenting path from $u$ to $v$ in the subgraph induced by $\tilde M_{w} \cup E(C)$. 
 
The above argument shows that for every cycle $C \in \mathcal C_{w}\setminus \mathcal C_{y}$ there exists either a single witness,
or (at least) one pair of witnesses. It remains to show that, if $C$ is not packed by $w$, 
then among all possible pairs of nodes which satisfy the definition of pair of witnesses,
at least one satisfies the condition described in $(ii)$. 
For the sake of a contradiction, assume that no pairs of witnesses satisfies the condition described in $(ii)$.
Then, this means that $C$ is critical for the vertex $w$. Let $\bar \ell$ be the last vertex visited by Algorithm 1
during the execution of Step 2. Clearly, $C$ cannot be critical for $\bar \ell$, otherwise the algorithm would have performed 
another iteration of Step 2. However, note that for all vertices $\ell'$ visited after
$\bar \ell$ in Step 3 and Step 4, we have $T(\bar \ell)=T(\ell')$, and therefore
$C$ cannot be critical for any $\ell'$. Since $w$ is also visited after $\bar \ell$,
$C$ cannot be critical for $w$, a contradiction.
\end{proof}

From now on, we fix $\mathcal W$ to be a good set of witnesses (one such set exists because of last lemma),
and based on that, we introduce the last two ingredients needed to describe our future moves, 
namely, \emph{target matchings}, and the \emph{target graph}.

\subsection{Target matchings and target graph}

\begin{definition}
Let $C \in \mathcal C_y\setminus \mathcal C_{w}$.    
The \emph{target matching} of $C$, denoted by $M_C$, is a matching which satisfies the following properties:
\begin{itemize}
\item[(i)] $M_C$ is a maximum cardinality matching of $C$;
\item[(ii)] If $(u,v) \in \mathcal W$ is a pair of witnesses for $C$ with $u\neq v$, then $M_C \cap E(Q(u,v))$ is a perfect matching of $Q(u,v)$;
\item[(iii)] among all matchings satisfying (i) and (ii),
$M_C$ maximizes the quantity $|M_C \cap \mathcal M_{w}|$.
\end{itemize}
\end{definition}

We refer to Figure~\ref{fig:witness} for an example.

\begin{figure}
\begin{center} 
\includegraphics[angle=0, width=8cm, height=5cm]{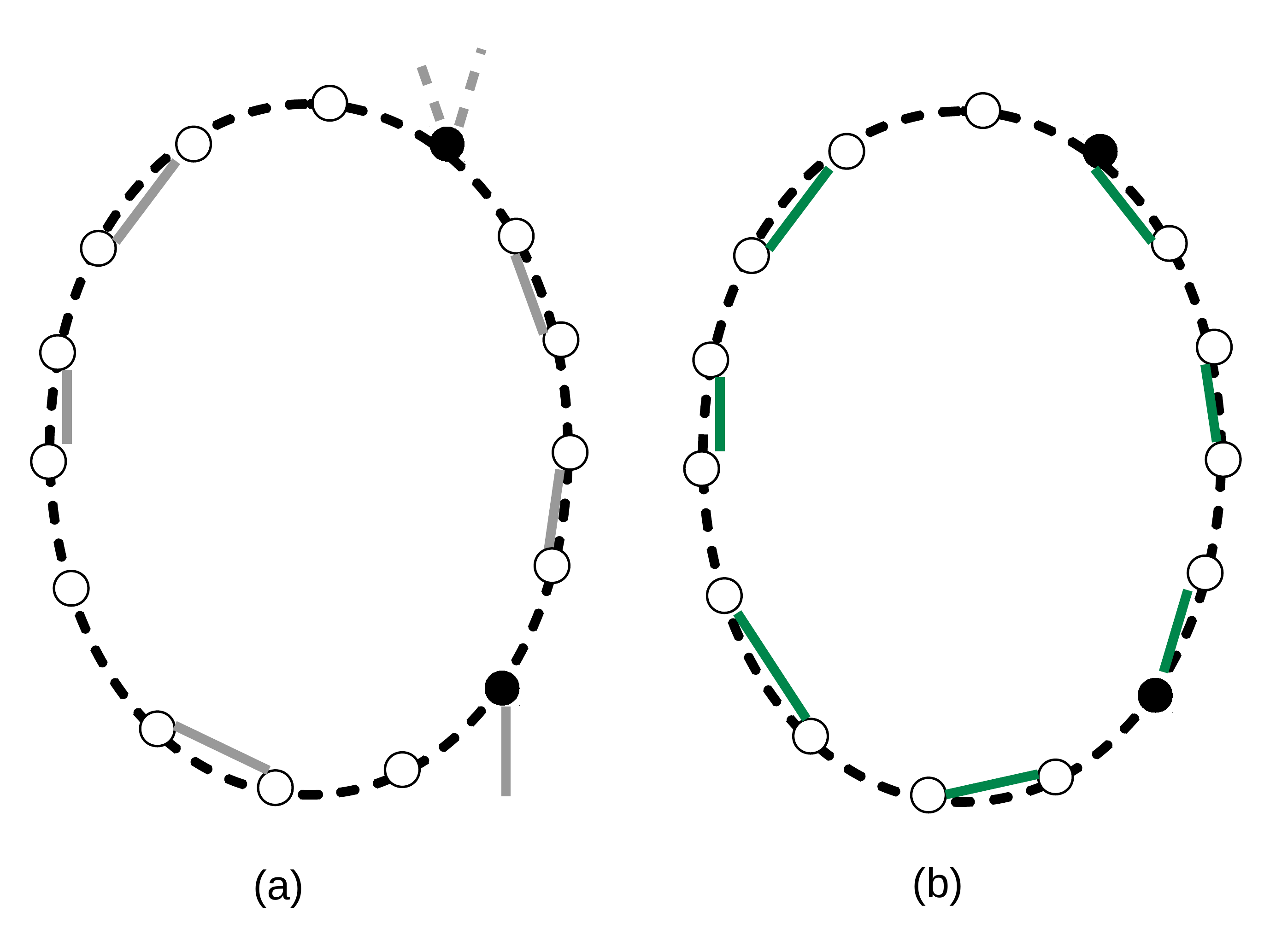}
\end{center}
\caption{A cycle $C$ in $\mathcal C_y \setminus \mathcal C_w$ is shown in (a). Black edges represent the edges in the support of $y$, and gray edges represent the edges in the support of $w$. Continuous lines represent edges of value 1, while dashed lines represent edges of value $\frac{1}{2}$. 
Black nodes represent the witnesses of $C$. The figure in (b) shows the witness matching (dark green edges). 
 }
\label{fig:witness} 
\end{figure}

\begin{definition}
Let $\ell$ be a vertex. The \emph{target graph} $\mathcal T^{\ell}$ is the graph induced by the edges
$$\mathcal M_{\ell} \Delta (\mathcal M_y \cup \{\bigcup_{C \in \mathcal C_{y}\setminus \mathcal C_{\ell}} M_C \})$$ 
\end{definition}

\noindent
Note that the target graph is the symmetric difference of two matchings, namely $\mathcal M_{\ell}$, and $\mathcal M_y \cup \{\bigcup_{C \in \mathcal C_{y}\setminus \mathcal C_{\ell}} M_C \}$, and therefore it is the disjoint union of paths and even cycles.
We call a component $K$ of $\mathcal T^{\ell}$ a \emph{path}-component if $K$ is a path, 
and a \emph{cycle}-component if $K$ is a cycle.

Roughly speaking, our goal is to move from $w$ to a vertex whose support graph does not contain any cycle in $\mathcal C_w \setminus C_y$,
by performing a sequence of moves, each involving one component of the target graph.
However, we would like not to use tokens belonging to witness nodes to pay for the moves, since as already
mentioned, we would like to keep these tokens to pay for the moves where cycles in $\mathcal C_y \setminus \mathcal C_{w}$
show up. For this reason we introduce the following definition.

\begin{definition}
Let $\ell$ be a vertex. A component $K$ of a target graph $\mathcal T^{\ell}$ is called 
\emph{dangerous} if all the nodes of $V(K)\cap T(\ell)$ 
are witnesses of some cycles in $\mathcal C_y \setminus \mathcal C_{\ell}$.
\end{definition}

Next lemmas give a few properties that will be crucial for our analysis. 
\begin{lemma}
\label{lem:property_target_graph}
Let $K$ be a component of the target graph $\mathcal T^{w}$. Then
\begin{itemize}
\item[(i)] $V(K) \cap T(w) \neq \emptyset$;
\item[(ii)] If $K$ is a dangerous cycle-component, then there exist at least two distinct cycles in $\mathcal C_w\setminus \mathcal C_y$
whose witnesses are in $V(K)$;
\item[(iii)] If $K$ is a dangerous path-component, then each endpoint $v$ of $K$ is either a single witness in $\mathcal W$, 
or $v \in V(C)$ for some cycle $C \in \mathcal C_{w}\setminus C_{y}$. 
\end{itemize}
\end{lemma}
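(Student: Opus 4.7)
The plan is to prove the three parts together, leveraging the invariants (\ref{eq:invariant2})--(\ref{eq:invariant3}), the definition of a good witness set, and the maximality built into the target matchings $M_C$. To ease notation I would write $\mathcal M_y^+ := \mathcal M_y \cup \bigcup_{C \in \mathcal C_y \setminus \mathcal C_w} M_C$, so that $\mathcal T^w = \mathcal M_w \Delta \mathcal M_y^+$ is the symmetric difference of two matchings, whose components are paths or even cycles. A preliminary observation I would extract from the description of Algorithm 1 is that every node that has used its token ends up in $V(\mathcal M_w) \cup V(\mathcal C_w \cap \mathcal C_y)$, and in particular $V(\mathcal C_w \setminus \mathcal C_y) \subseteq T(w)$.

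For (i), I would argue by contradiction assuming $V(K) \cap T(w) = \emptyset$. Any $\mathcal M_w$-edge $e = \{a,b\} \in K$ has endpoints in $V(\mathcal M_w) \subseteq V(\mathcal G_w) \setminus V(\mathcal C_w \cap \mathcal C_y)$, so invariant (\ref{eq:invariant3}) forces $e \in \tilde M_w$, and then invariant (\ref{eq:invariant2}) together with $e \notin \mathcal M_y^+$ places $e$ on some $C \in \mathcal C_y \setminus \mathcal C_w$ with $e \notin M_C$. The maximality built into $M_C$ rules out $C$ having a single witness (which would give $M_C = \mathcal M_w \cap E(C) \ni e$), so $C$ has a pair of witnesses $(u_C, v_C)$ and $e$ is a $\tilde M_w$-edge of $Q(u_C,v_C)$. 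I then propagate along $Q(u_C,v_C)$: the non-$\tilde M_w$ edges adjacent to $e$ lie in $M_C \subseteq \mathcal M_y^+$ and cannot lie in $\mathcal M_w$ (else they would coincide with the unique $\mathcal M_w$-edge at that endpoint, namely $e$), so they belong to $\mathcal T^w$ and hence to $K$. Iterating in both directions until I reach $u_C$ or $v_C$, both in $T(w)$, yields the contradiction. The boundary case where $K$ consists of a single $\mathcal M_y^+$-edge is handled symmetrically, using termination of Step 3 of Algorithm 1 to preclude its two endpoints both lying outside $V(\mathcal G_w)$.

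For (ii), I would assume for contradiction that all witnesses in $V(K)$ witness a single cycle $C^* \in \mathcal C_y \setminus \mathcal C_w$. By (i) and dangerousness, $V(K) \cap T(w)$ is nonempty and contained in the witnesses of $C^*$, so it has size one or two. A single witness $u$ would be $\mathcal M_w$-exposed in $E(C^*)$ and, since $C^*$ is packed by $w$, also unmatched by $M_{C^*}$; together with $u \notin V(\mathcal M_y)$, this makes $u$ isolated in $\mathcal T^w$, incompatible with $u \in V(K)$ on a cycle. In the pair-witness case $V(K) \cap T(w) = \{u, v^*\}$, invariant (\ref{eq:invariant3}) and the alternating cycle structure of $K$ force the $\mathcal M_w$-edge at each of $u, v^*$ to have its other endpoint also in $T(w)$; the only candidate is the other witness, so $\{u, v^*\} \in \mathcal M_w \cap E(K)$, placing $u$ and $v^*$ in the same component of $\mathcal G_w$. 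Condition (ii) of the good witness set then forces $|E(Q(u, v^*))| = 1$, so $\{u, v^*\} \in M_{C^*} \cap \mathcal M_w \subseteq \mathcal M_y^+ \cap \mathcal M_w$, which is disjoint from $\mathcal T^w$, a contradiction.

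For (iii), let $v$ be an endpoint of the dangerous path-component $K$; its unique $K$-edge is either in $\mathcal M_w$ (Case A) or in $\mathcal M_y^+$ (Case B). In Case B, $v \notin V(\mathcal M_w)$, so if $v \in V(\mathcal G_w)$ then $v \in V(C')$ for some $C' \in \mathcal C_w$; I would exclude $C' \in \mathcal C_w \cap \mathcal C_y$ by noting that $v$'s $\mathcal M_y^+$-edge would otherwise need to sit on a cycle of $\mathcal C_y \setminus \mathcal C_w$ through $v$, conflicting with node-disjointness of the cycles in $\mathcal C_y$. Hence $C' \in \mathcal C_w \setminus \mathcal C_y$ and the claim holds. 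The subcase $v \notin V(\mathcal G_w)$ in Case B I would rule out by extending the alternating $K$-structure (whose $\mathcal M_y^+$-edges lie in $E(\mathcal G_y)$, and whose $\mathcal M_w$-edges, once both endpoints lack tokens, are forced into $\tilde M_w \subseteq E(\mathcal G_y)$) into an $\tilde M_w$-augmenting path in $\mathcal G_y$ with both endpoints outside $V(\mathcal G_w)$, contradicting termination of Step 3. In Case A, $v \in V(\mathcal M_w)$ precludes the $\mathcal C_w \setminus \mathcal C_y$ alternative, so I must show $v$ is a single witness. If $v \notin T(w)$, the chain argument of (i) applies verbatim starting from $v$'s $\mathcal M_w$-edge and produces the same contradiction. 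Hence $v \in T(w)$; by dangerousness $v$ is a witness, and a pair-witness structure would make the first $Q$-edge at $v$ a non-$\tilde M_w$ edge of $\mathcal M_y^+$ incident to $v$, raising $v$'s $\mathcal T^w$-degree above one (the only escape, having it coincide with $v$'s $\mathcal M_w$-edge, conflicts with its non-$\tilde M_w$ role in $Q$). Thus $v$ must be a single witness. The most delicate steps are the propagation along $Q(u_C, v_C)$ used in (i) and Case A of (iii), and the construction of an $\tilde M_w$-augmenting path in the subcase $v \notin V(\mathcal G_w)$ of Case B.
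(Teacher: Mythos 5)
Your part (ii) is essentially sound and uses the same ingredients as the paper (the token of the $\mathcal M_w$-partner of a tokened node via invariant (\ref{eq:invariant3}), plus condition (ii) of Definition \ref{def:set_witnesses}). The problems are in (i) and (iii). In (i), the pivotal claim that your edge $e$ must be a $\tilde M_w$-edge of $Q(u_C,v_C)$ is not justified by ``the maximality built into $M_C$'': condition (iii) of the target matching maximizes $|M_C\cap \mathcal M_w|$ over maximum matchings of $C$ containing the perfect matching of $Q(u_C,v_C)$, and such a matching need not contain every $\mathcal M_w$-edge on the other arc of $C$ --- on the interior of the non-$Q$ arc (a path with an odd number of nodes) a prescribed sub-matching need not extend to any maximum matching (e.g.\ a seven-node path with prescribed edges in positions $2$--$3$ and $5$--$6$: every maximum matching contains at most one of them). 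So $e$ can lie off $Q(u_C,v_C)$ with $e\notin M_C$, and then your propagation is not guaranteed to reach $u_C$ or $v_C$: it can stall at the unique $M_C$-exposed node of $C$ (where $K$ simply ends) or at a node outside $V(\mathcal G_w)$. Closing those cases needs a switching argument against property (iii) of $M_C$ and the termination of Step 3 of Algorithm 1, which you invoke only for the single-edge boundary component. (The paper sidesteps edge-by-edge propagation: under the hypothesis all of $K$ lies in $\mathcal G_y$, hence $K$ is a path; an even path contradicts the choice of $M_C$ by switching, and an odd path is $\tilde M_w$-augmenting with both endpoints forced outside $V(\mathcal G_w)$, contradicting Step 3.)

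In (iii) there are two further gaps. In Case B with $v\notin V(\mathcal G_w)$, the promised $\tilde M_w$-augmenting path with \emph{both} endpoints outside $V(\mathcal G_w)$ does not materialize: walking from $v$ along $K$, every internal tokenless node is covered by $\mathcal M_w$, and the walk terminates at the first tokened node of $K$, which (since $K$ is dangerous) is a witness and in particular lies in $V(\mathcal G_w)$; so the path has only one exposed off-support endpoint and Step 3 is not violated. Excluding this configuration requires the witness structure (for instance, that the unique $M_C$-edge at a paired witness is the terminal edge of $Q(u,v)$ and that every interior node of $Q(u,v)$ is covered by $\tilde M_w$), which is exactly the kind of analysis the paper carries out via the minimal-index tokened node. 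In Case A with $v\notin T(w)$, ``the chain argument of (i)'' cannot ``produce the same contradiction'': in (i) the contradiction was reaching a tokened node inside a component assumed token-free, whereas here $K$ is dangerous and witnesses are allowed (indeed expected) in $V(K)$. What is actually needed there is different: if $v$ is $M_C$-covered, its $M_C$-edge gives $v$ degree two in $\mathcal T^w$, contradicting that $v$ is an endpoint; if $v$ is the $M_C$-exposed node, one needs an $M_C$-swap contradicting property (iii). Finally, the stated reason in the pair-witness subcase (``conflicts with its non-$\tilde M_w$ role in $Q$'') is not the right conflict: an edge may be in $\mathcal M_w\setminus\tilde M_w$ while not in $\tilde M_w$; the correct contradiction is that the edge would then lie in $\mathcal M_w\cap M_C$ and hence outside the symmetric difference $\mathcal T^w$.
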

\begin{proof} 
Let $K$ be a component of the target graph $\mathcal T^{w}$. We first prove (i). 
Assume $V(K) \cap T(w) = \emptyset$. Then $E(K) \cap \mathcal M_{w} \subseteq \tilde M_{w}$, 
by invariant (\ref{eq:invariant3}).
Since $\tilde M_{w} \subseteq E(\mathcal G_{y})$ by invariant (\ref{eq:invariant2}), it follows that
$K \subseteq \mathcal G_y$, and therefore $K$ is a path.
If $K$ has even length, then $K \subset C$ for some $C \in \mathcal C_y$, and it
is an $M_C$-alternating path. 
By switching along the edges of this path we could get
another matching $M'_{C}$ which would contradict our choice of the witness matching
for $C$, since $M'_{C}$ would satisfy (i) and (ii), but $|M'_C \cap \tilde M_{w}| > |M_C \cap \tilde M_{w}|$.
It follows that $K$ has odd length, and since $\mathcal M_y \cup \{\cup_{C \in \mathcal C_y} M_C\}$ is
a maximum matching in $\mathcal G_{y}$, $K$ has to be $\tilde M_{w}$-augmenting. 
Then, if both endpoints $c',c$ of $K$ are not in $T(w)$, then $w(\delta(c'))=w(\delta(c))=0$
(since otherwise they would still have their token available, being non-endpoints of edges in $\tilde M_w$, by invariant 
(\ref{eq:invariant3})). However, this contradicts the termination of Step 3 of Algorithm 1.
It follows that at least one of the endpoints has to be in $T(w)$.

Now we prove (ii). Let $K$ be a dangerous cycle-component, and $v$ be a node in $ V(K)\cap T(w)$ (such node exists because of (i)). 
Since $K$ is dangerous, 
$v$ is a witness of a cycle $C \in \mathcal C_y$. 
Since $K$ is a cycle, $v$ cannot be a single witness (such nodes
have degree at most 1 in the target graph). It follows that
there is another node $u$ such that $(u,v)$ is a pair of witnesses for $C$. 
Let $v'$ (resp. $u'$) be the node such that $\{v,v'\} \in \mathcal M_{w}$
(resp. $\{u,u'\} \in \mathcal M_{w}$). Note that $v'\neq u$ (resp. $u'\neq v$),
since otherwise $\mathcal W$ would not be a good set of witnesses (the pair $(u,v)$ would contradict
the second condition of Definition \ref{def:set_witnesses}).
By invariant (\ref{eq:invariant2})
both $u',v' \in T(w)$, and therefore, since $K$ is dangerous, they are (in a pair of) witnesses for at least another cycle
in $\mathcal C_y$.

Finally we prove (iii). Let $K$ be a dangerous path-component, with $V(K):=\{v_1, \dots,  v_k\}$. 
Let $i$ be the smallest index such that $ v_i \in T(w)$ (such index exists because of (i)). 
Since $K$ is dangerous, $v_i$ is a witness (either a single one, or a paired one) of a cycle $C \in \mathcal C_y$. 
If $v_i$ is a single witness, then it is necessarily the endpoint of $K$, 
i.e. $i=1$, since single witnesses have degree at most one in the target graph.
Assume now that  $v_i$ is in a pair of witnesses. In this case, $v_i$ cannot be an endpoint of an edge in $\mathcal M_{w}$:
using invariant (\ref{eq:invariant3}), we can see that
if $\{v_i, v_{i+1}\} \in \mathcal M_{w}$, then necessarily $v_{i+1}$ is the other witness 
of $C$, i.e. $Q(v_i, v_{i+1}) = \{v_i, v_{i+1}\}$ contradicting
that this edge is in the target graph,  and if $\{v_i, v_{i-1}\} \in \mathcal M_{w}$, our choice of $i$ is contradicted. It follows that $v_i \in V(\bar C)$ for some
$\bar C \in \mathcal C_w\setminus \mathcal C_y$. This implies that 
$v_i$ have degree one in the target graph, and therefore it is 
the endpoint of $K$ (i.e. $i=1$).
We can apply the same argument to the biggest index such that $ v_i \in T(w)$, and get the same conclusion
for the other endpoint of $K$.
\end{proof}

\subsection{Moving from $w$ to $r$}
We move from $w$ to a vertex $r$ with the property that $\mathcal C_{r} \subseteq
\mathcal C_y$, by eliminating up to two cycles in $\mathcal C_w\setminus \mathcal C_y$ at each move. 

The algorithm maintains six invariants for every vertex $\ell$ visited during its execution. The first three invariants
guarantee that the conditions of Lemma \ref{lem:property_target_graph} hold for every vertex visited by the algorithm, i.e.

\begin{equation}
\label{eq:invariant_1bis}
\mbox{For every component $K$ of } \mathcal T^{\ell}, \; V(K) \cap T(\ell) \neq \emptyset
\end{equation}

\begin{equation}
\label{eq:invariant_2bis}
\begin{array}{c}
\mbox{If } K \mbox{ is a dangerous cycle-component of } \mathcal T^{\ell}, \mbox{ then }
V(K) \mbox{ contains the witnesses} \\ \mbox{ of at least two cycles in } \mathcal C_{\ell}\setminus \mathcal C_y 
\end{array}
\end{equation}

\begin{equation}
\label{eq:invariant_3bis}
\begin{array}{c}
\mbox{If } K \mbox{ is a dangerous path-component of } \mathcal T^{\ell},
\mbox{ then each endpoint $v$ of $K$ is} \\ \mbox{either a single witness in $\mathcal W$,
 or $v \in V(C)$ for some cycle $C \in \mathcal C_{\ell}\setminus C_{y}$}
\end{array}
\end{equation}

\smallskip
\noindent
The fourth and fifth invariants will be useful to keep track of which nodes in $\mathcal T^\ell$ still have their tokens. 
The fourth invariant states that for every edge of $\mathcal M_{\ell}$ which is present in the target graph,  either both the endpoints have the token, or both the endpoints used their tokens.

\begin{equation}
\label{eq:invariant_4bis}
\forall \{u,v\} \in \mathcal M_{\ell} \cap E(\mathcal T^{\ell}), \; \mbox{ if $u \in T(\ell)$ then $v \in T(\ell)$ (and vice versa)}
\end{equation}
\noindent
Note that (\ref{eq:invariant_4bis}) holds for $w$, because of invariant (\ref{eq:invariant3}) maintained by Algorithm 1.

The fifth invariant states that every edge of $\tilde M_{\ell}$ that is present in the target graph, is an edge of $E(\mathcal C_{y})$.

\begin{equation}
\label{eq:invariant_5bis}
\forall \{u,v\} \in \tilde M_{\ell}  \cap E(\mathcal T^{\ell}), \; \{u,v\} \in E(\mathcal C_{y})
\end{equation}

\noindent
Invariant (\ref{eq:invariant_5bis}) holds for $w$, because of invariant (\ref{eq:invariant2}) maintained by Algorithm 1,
together with the fact that if $e \in \tilde M_{\ell} \cap \mathcal M_y$, then $e \notin E(\mathcal T^\ell)$.

\smallskip
\noindent
Finally, the last invariant establishes that the witnesses of cycles in $\mathcal C_y \setminus \mathcal C_{\ell}$ have their token
available.

\begin{equation}
\label{eq:invariant_6bis}
\forall C \in \mathcal C_y \setminus \mathcal C_{\ell}, \mbox{ if } v\in V(C) \mbox{ is a witness of } C,  \mbox{ then } v \in T(\ell).
\end{equation}

\noindent
Clearly invariant (\ref{eq:invariant_6bis}) holds for $w$, because of the definition of witnesses.

\smallskip
The algorithm selects one cycle in $C \in  \mathcal C_{\ell}\setminus \mathcal {C}_y$ at the time,
and performs a move which involves at most one path-component of the target graph. 
Note that $C$ intersects each component of $\mathcal T^{\ell}$ in at most two nodes. To see this, recall that each component $K$ of $\mathcal T^{\ell}$  is either an $\mathcal M_{\ell}$-alternating path, or an $\mathcal M_{\ell}$-alternating cycle. 
Therefore, $C$ can only intersect $K$ at the endpoints of an $\mathcal M_{\ell}$-alternating path. Since
$C$ has an odd number of nodes, it follows that there exists 
either one node $v$ in $V(C)$ which is not a node of $\mathcal T^{\ell}$,
or one component $K$ of $\mathcal T^{\ell}$ such that $|V(C) \cap V(K)| =1$.
Based on this observation, we have the following definition.

\begin{definition}
Let $C \in  (\mathcal C_{\ell}\setminus \mathcal {C}_y)$. 
The \emph{least-intersecting} component for $C$ is a component
$H$ of the graph $(V(\mathcal T^{\ell} \cup C), E(\mathcal T^{\ell}))$
that minimizes $|V(H) \cap V(C)|$.
\end{definition}

Note that the graph $(V(\mathcal T^{\ell} \cup C), E(\mathcal T^{\ell}))$
is simply the graph $\mathcal T^{\ell}$ with (possibly) additional singleton nodes
that belong to $V(C)$.
By the above reasoning, a least-intersecting component $H$ for $C$
will always have $|V(H) \cap V(C)| =1$. In particular, 
either $H$
will be path-component of $\mathcal T^{\ell}$ with exactly one endpoint
in $V(C)$, or $H$ will be a singleton node $v$ for some $v \in V(C)$
(this can happen if there exists a node $v \in V(C)$ with $v \notin V(\mathcal T^{\ell})$).
Algorithm 2 formally describes the moves we perform to go from $w$ to $r$.

\vspace*{.2cm}
\noindent
{\bf Algorithm 2 (from $w$ to $r$):} \hrulefill\\
1. Set $\ell := w$. 

\vspace*{.1cm}
\noindent
2. While there exists a cycle $C \in \mathcal C_{\ell}\setminus C_y$ do:
\begin{enumerate}
\item[2.1] Let $H$ be the least-intersecting component for $C$, and $V(H):=\{v_1, \dots, v_k\}$ with $v_1 \in V(C)$
\item[2.2] If $v_k$ is a single witness for some $\bar C \in \mathcal C_{y}\setminus \mathcal C_{\ell}$, 
let $N$ be the perfect matching of $C$ exposing $v_1$ and do the following move:

\vspace{.2cm}
\hspace*{.3cm}
($\triangle$) Change the coordinate $\ell_e$ of all the edges of $H \cup C \cup \bar C$ by setting
\begin{equation*}
\ell_e =
\begin{cases}
1 & \text{if } e \in E(H)\setminus \mathcal M_{\ell} \\
0 & \text{if } e \in E(H)\cap \mathcal M_{\ell}  \\
1 & \text{if } e \in N \\
0 & \text{if } e \in E(C)\setminus N  \\
\frac{1}{2} & \text{if } e \in E(\bar C) \\
\end{cases}
\end{equation*}
\hspace{.3cm} and let the node $v_k$ and the cycle $C$ use their tokens to pay for this move;

\item[2.3] Else if $v_k \in V(\bar C)$ for some $\bar C \in \mathcal C_{\ell}\setminus C_y$, $\bar C \neq C$, 
let $N$ be the perfect matching of $C$ exposing $v_1$, let $\bar N$ be the perfect matching of $\bar C$ exposing $v_k$, and do the following move:

\vspace{.2cm}
\hspace*{.3cm}
($\triangledown$) Change the coordinate $\ell_e$ of all the edges of $H \cup C \cup \bar C$ by setting
\begin{equation*}
\ell_e =
\begin{cases}
1 & \text{if } e \in E(H)\setminus \mathcal M_{\ell} \\
0 & \text{if } e \in E(H)\cap \mathcal M_{\ell}  \\
1 & \text{if } e \in N \\
0 & \text{if } e \in E(C)\setminus N  \\
1 & \text{if } e \in \bar N \\
0 & \text{if } e \in E(\bar C)\setminus \bar N  \\
\end{cases}
\end{equation*}
\hspace{.3cm} and let the cycles $C$ and $\bar C$ use their tokens to pay for this move;

\item[2.4] Else, let $j$ be 
%the highest index such that $v_j \in V(H) \cap T(\ell)$, let $N$ be the perfect matching of $C$ exposing $v_1$, and do the following move:
any index such that $v_j \in V(H) \cap T(\ell)$ and $v_j$ is not a witness in $\mathcal W$, let $N$ be the perfect matching of $C$ exposing $v_1$, and do the following move:

\vspace{.2cm}
\hspace*{.3cm}
($\square$) Change the coordinate $\ell_e$ of all the edges of $H \cup C$ by setting
\begin{equation*}
\ell_e =
\begin{cases}
1 & \text{if } e \in E(H)\setminus \mathcal M_{\ell} \\
0 & \text{if } e \in E(H)\cap \mathcal M_{\ell}  \\
1 & \text{if } e \in N \\
0 & \text{if } e \in E(C)\setminus N  \\
\end{cases}
\end{equation*}
\hspace{.3cm} and let the node $v_j$ and the cycle $C$ use their tokens to pay for this move;

\end{enumerate}

\vspace*{.1cm}
\noindent
3. Output $r:={\ell}$. 

\vspace*{-.2cm}
\noindent
\hrulefill

\bigskip
\begin{lemma}
All steps of Algorithm 2 can be correctly performed, and invariants (\ref{eq:invariant_1bis}), (\ref{eq:invariant_2bis}),
(\ref{eq:invariant_3bis}), (\ref{eq:invariant_4bis}), (\ref{eq:invariant_5bis}) 
and (\ref{eq:invariant_6bis}) are maintained.
\end{lemma}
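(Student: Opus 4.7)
The plan is to proceed by induction on the iteration count of Algorithm 2. The base case $\ell = w$ is directly established by the text preceding the algorithm: invariants (\ref{eq:invariant_1bis})--(\ref{eq:invariant_3bis}) restate Lemma \ref{lem:property_target_graph} for the target graph $\mathcal T^w$, invariant (\ref{eq:invariant_4bis}) follows from Algorithm 1's invariant (\ref{eq:invariant3}) specialized to edges of $\mathcal T^w$, invariant (\ref{eq:invariant_5bis}) follows from invariant (\ref{eq:invariant2}) once one observes that edges of $\tilde M_w \cap \mathcal M_y$ do not appear in $\mathcal T^w$, and invariant (\ref{eq:invariant_6bis}) is immediate from the definition of a witness.

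For the inductive step, assume the six invariants hold at vertex $\ell$ with a chosen cycle $C \in \mathcal C_\ell \setminus \mathcal C_y$; existence of a least-intersecting component $H$ with $|V(H) \cap V(C)| = 1$ was argued in the preamble to the algorithm. The only non-trivial existence question is that of the index $j$ in Step 2.4: by invariant (\ref{eq:invariant_1bis}) applied to $H$ one has $V(H) \cap T(\ell) \neq \emptyset$, and if every such node were a witness in $\mathcal W$ then $H$ would be dangerous, so invariant (\ref{eq:invariant_3bis}) applied to the endpoint $v_k$ would force $v_k$ to be either a single witness or to lie in some $\bar C \in \mathcal C_\ell \setminus \mathcal C_y$, contradicting the ``else'' branch leading to Step 2.4. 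Validity of the three moves is a direct verification against Lemma \ref{cor:adjacency}: moves $(\triangle)$ and $(\triangledown)$ each produce the symmetric difference of two node-disjoint odd cycles packed by the new matching together with an $\mathcal M_\ell$-alternating path $H$ joining them at the endpoints (case (e)), while move $(\square)$ produces a single packed odd cycle $C$ and the alternating path $H$ meeting it at $v_1$ (case (f)). Token availability is then routine: any cycle in $\mathcal C_\ell \setminus \mathcal C_y$ actually lies in $\mathcal C_w \setminus \mathcal C_y$, since during Algorithm 2 the only cycles ever added to the support lie in $\mathcal C_y$, and thus still carries its token by Observation \ref{obs:cycles}; the single witness $v_k$ used in $(\triangle)$ holds its token by invariant (\ref{eq:invariant_6bis}); and $v_j$ in $(\square)$ is selected explicitly from $T(\ell)$.

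The main technical burden, which I expect to be the principal obstacle, is verifying that the six invariants are re-established at the new vertex $r$. The key structural observation is that $\mathcal T^r$ and $\mathcal T^\ell$ differ only inside the region touched by the move, namely $E(H) \cup E(C)$ and, in cases $(\triangle)$ and $(\triangledown)$, $E(\bar C)$; untouched components of $\mathcal T^\ell$ survive unchanged in $\mathcal T^r$, so invariants (\ref{eq:invariant_1bis})--(\ref{eq:invariant_5bis}) transfer to them trivially. For the modified region, invariants (\ref{eq:invariant_4bis}) and (\ref{eq:invariant_5bis}) are verified by direct edge-by-edge bookkeeping of how $\mathcal M_\ell$ and $\tilde M_\ell$ change status, using that in $(\square)$ only the internal nodes of $H$ are non-endpoints of $\tilde M_\ell$-edges in $\mathcal T^\ell$; invariant (\ref{eq:invariant_6bis}) uses that the nodes whose tokens are spent are deliberately either non-witnesses ($v_j$ in $(\square)$) or the witness $v_k$ in $(\triangle)$ whose cycle $\bar C$ simultaneously enters $\mathcal C_r$, so the witness obligation for $\bar C$ disappears. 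The most delicate verification is that of invariants (\ref{eq:invariant_1bis})--(\ref{eq:invariant_3bis}) for the possibly-new components of $\mathcal T^r$ obtained by fusing $H$ with neighboring structure through the changes on $E(C)$ (or $E(\bar C)$); one argues by a case analysis paralleling Lemma \ref{lem:property_target_graph}, showing that any fused component still meets $T(r)$, and that dangerous endpoints of path-components inherit the single-witness-or-residual-cycle alternative from their counterparts in $\mathcal T^\ell$ after accounting for the tokens just spent.
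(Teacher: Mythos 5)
Your high-level plan (induction on the moves, validity via Lemma~\ref{cor:adjacency}, token accounting via Observation~\ref{obs:cycles} and invariant~(\ref{eq:invariant_6bis}), and the argument that $v_j$ exists in Step~2.4 because otherwise $H$ would be dangerous and invariant~(\ref{eq:invariant_3bis}) would route the algorithm to Step~2.2 or~2.3) matches the paper. One small inaccuracy first: you classify $(\triangle)$ under case~(e) and $(\square)$ under case~(f) only, but the least-intersecting component $H$ may be a singleton node ($v_1=v_k$, $E(H)=\emptyset$), in which case $(\triangle)$ is justified by case~(d) (the two odd cycles meet in the single node $v_1=v_k$) and $(\square)$ by case~(c); only $(\triangledown)$ is always case~(e), since node-disjointness of the cycles in $\mathcal C_\ell$ forces $v_k\neq v_1$ there.

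The substantive gap is where you yourself locate the difficulty: re-establishing invariants (\ref{eq:invariant_1bis})--(\ref{eq:invariant_3bis}) (and (\ref{eq:invariant_4bis}), (\ref{eq:invariant_5bis})) at the next vertex $\bar\ell$. Your sketch relies on ``fusing $H$ with neighboring structure'' and on new dangerous endpoints ``inheriting'' their properties from counterparts in $\mathcal T^{\ell}$, but genuinely new components need not have counterparts: they arise because the near-perfect matching $N$ placed on $E(C)$ injects fresh matching edges into the target graph. The paper's actual mechanism, which your proposal never identifies, is that every component of $\mathcal T^{\bar\ell}$ that is not a component of $\mathcal T^{\ell}$ contains an edge $e\in\mathcal M_{\bar\ell}\setminus\mathcal M_{\ell}$ whose endpoints lie in $V(C)\setminus\{v_1,v_k\}$; since only nodes of $V(H)$ (and cycles) pay for the move, these endpoints still hold their tokens, which is what delivers (\ref{eq:invariant_1bis}), (\ref{eq:invariant_4bis}) and (\ref{eq:invariant_5bis}) for the new components. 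For (\ref{eq:invariant_2bis}) the essential point — absent from your argument — is that if such a new component is a dangerous cycle-component, the two tokened endpoints $u,\bar u$ of that edge must be witnesses but cannot be a \emph{pair} of witnesses of one and the same cycle: either $Q(u,\bar u)$ would be the single edge $\{u,\bar u\}$, which then lies in the target matching and hence not in the target graph, or $|E(Q(u,\bar u))|>1$ and condition~(ii) of Definition~\ref{def:set_witnesses} would force $u$ and $\bar u$ into distinct components of $\mathcal G_w$, impossible since both lie on the cycle $C$. Without invoking this property of the good set of witnesses, (\ref{eq:invariant_2bis}) cannot be recovered. Finally, (\ref{eq:invariant_3bis}) is not inherited but re-proved from scratch for arbitrary dangerous path-components of $\mathcal T^{\bar\ell}$, exactly as in Lemma~\ref{lem:property_target_graph}(iii), using (\ref{eq:invariant_1bis}), (\ref{eq:invariant_4bis}) and the definition of the target matchings; as stated, your ``inheritance after accounting for spent tokens'' does not substitute for that argument.
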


\begin{proof}
First we argue that the moves are indeed valid moves. 
The move
 ($\triangle$) is a valid move according to either Lemma \ref{cor:adjacency}(d) (if $v_k=v_1$) or 
 Lemma \ref{cor:adjacency}(e) (if $v_k \neq v_1$).
 The move
 ($\triangledown$) is a valid move according to
 Lemma \ref{cor:adjacency}(e).
Furthermore, the move  
  ($\square$) is a valid move according to 
  either Lemma \ref{cor:adjacency}(c) (if $v_k=v_1$), or 
  Lemma \ref{cor:adjacency}(f) (if $v_k \neq v_1$).

Second, we argue that the invariants hold, by induction
on the number of iterations of Step 2 performed by the algorithm.
Let $C$ be a cycle in $\mathcal C_{\ell}\setminus \mathcal C_y$ considered  at a Step 2 of the algorithm, and $H$
the component in Step 2.1, with nodes $\{v_1, \dots, v_k\}$ (with possibly, $v_k=v_1$).
Let $\bar \ell$ be the vertex visited by the algorithm right after $\ell$. 
Invariants (\ref{eq:invariant_1bis}), (\ref{eq:invariant_4bis}) and (\ref{eq:invariant_5bis}) follow by two things: (i)  
the nodes paying for the moves are in $V(H)$, but $V(H)$ and $E(H)$ 
will not be present in $\mathcal T^{\bar \ell}$, (ii) 
if $K$ is a component of $\mathcal T^{\bar \ell}$ but not a component
of $\mathcal T^{\ell}$, then $K$ contains at least one edge $e \in \mathcal M_{\bar \ell}\setminus \mathcal M_{\ell}$.
Necessarily, the endpoints of $e$ are in $V(C)\setminus \{v_1,v_k\}$, i.e. $e \notin \tilde M_{\ell}$, 
and therefore still have their token available. 

Let $K$ be a dangerous cycle-component of $\mathcal T^{\bar \ell}$.
If $K$ is also a dangerous cycle-component of $\mathcal T^{\ell}$, then 
the condition of invariant (\ref{eq:invariant_2bis}) clearly still hold. 
Otherwise, $K$ contains at least one edge $e = \{u,\bar u\}$ in $\mathcal M_{\bar \ell} \setminus \mathcal M_{\ell}$.
By the same reasoning as above, $u$ and $\bar u$ are in $T(\bar \ell)$, and since $K$ is dangerous,
they must be witnesses in $\mathcal W$. However, $u$ and $\bar  u$ cannot be 
a pair of witnesses for the same cycle $\bar C \in \mathcal C_y$, 
since otherwise this would contradict condition $(ii)$ of Definition \ref{def:set_witnesses}.
It follows that invariant (\ref{eq:invariant_2bis}) holds.

The argument 
for invariant (\ref{eq:invariant_3bis}) is identical to the one used in Lemma \ref{lem:property_target_graph}.
We repeat it here for the sake of completeness.
Let $K$ be a dangerous path-component, with $V(K):=\{u_1, \dots,  u_k\}$. 
Let $i$ be the smallest index such that $ u_i \in T(\bar \ell)$ (such index exists because of invariant (\ref{eq:invariant_1bis})). 
Since $K$ is dangerous, $u_i$ is a witness (either a single one, or a paired one) of a cycle $\bar C \in \mathcal C_y$. 
If $u_i$ is a single witness, then it is necessarily the endpoint of $K$, 
i.e. $i=1$, since single witnesses have degree at most one in the target graph.
Assume now that  $u_i$ is in a pair of witnesses. In this case, $u_i$ cannot be an endpoint of an edge in $\mathcal M_{\bar \ell}$:
using invariant (\ref{eq:invariant_4bis}), we can see that
if $\{u_i, u_{i+1}\} \in \mathcal M_{\bar \ell}$, then necessarily $u_{i+1}$ is the other witness 
of $\bar C$, i.e. $\{u_i, u_{i+1}\} = Q(u_i, u_{i+1})$ contradicting
that this edge is in the target graph,  and if $\{u_i, u_{i-1}\} \in \mathcal M_{\bar \ell}$, our choice of $i$ is contradicted. It follows that $u_i \in V(\tilde C)$ for some
$\tilde C \in \mathcal C_{\bar \ell}\setminus \mathcal C_y$. This implies that 
$u_i$ has degree one in the target graph, and therefore it is 
the endpoint of $K$ (i.e. $i=1$).
We can apply the same argument to the biggest index such that $ u_i \in T(\bar \ell)$, and get the same conclusion
for the other endpoint of $K$.

Invariant (\ref{eq:invariant_6bis}) holds trivially, since if we use a token of a witness node $v_k$
to pay for moving from $\ell$ 
 to $\bar \ell$, then $v_k$ was a single witness and the cycle witnessed by $v_k$ is in $\mathcal C_{\bar \ell} \cap \mathcal C_y$.
 
Finally, we argue that there are enough tokens to pay for the moves. 
In Step 2.2, the cycle $C$ has a token by Observation \ref{obs:cycles}, 
and $v_k$ has a token by invariant (\ref{eq:invariant_5bis}).
In Step 2.3, the cycles $C$ and $\bar C$ have their tokens by Observation \ref{obs:cycles}. 
In Step 2.4, the cycle $C$ has a token by Observation \ref{obs:cycles}, 
and $v_j$ exists: invariant (\ref{eq:invariant_1bis}) 
guarantees that $V(H) \cap T(\ell)$ is not empty, and
$H$ cannot be dangerous, otherwise by
invariant (\ref{eq:invariant_3bis}) we would perform
the operation in Step 2.3 or 2.2.
\end{proof}
\noindent
We conclude this section with a lemma that lists some useful properties
satisfied by vertex $r$.

\begin{lemma}
\label{lem:r_properties}
The vertex $r$ satisfies the following properties:
\begin{itemize}
\item[(a)] Each component of the target graph $\mathcal T^r$
contains at least two nodes that are in $T(r)$.
\item[(b)] Let $C\in \mathcal C_y \setminus \mathcal C_{r}$ be a cycle with a single witness $(u,u) \in \mathcal W$.
Then $C$ is packed by $r$, and $u$ is the endpoint of some path-component of $\mathcal T^r$.
\item[(c)] Let $C\in \mathcal C_y \setminus \mathcal C_{r}$ be a cycle with a pair of witnesses $(u,v) \in \mathcal W$, $u\neq v$. 
If $u \in V(K)$ for some component $K$ of $\mathcal T^r$, then $Q(u,v) \subseteq K$. Furthermore,
$u$ (resp. $v$) is incident into at least two edges of $K$.
 \item[(d)] Let $C\in \mathcal C_y \setminus \mathcal C_{r}$, 
 and $K$ be a dangerous cycle-component of $\mathcal T^r$
 such that $|V(C) \cap V(K)| \neq \emptyset$. Then $K$ contains the witnesses of $C$. 
\end{itemize}
\end{lemma}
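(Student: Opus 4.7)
The plan is to prove the four properties by a careful structural analysis of the target graph $\mathcal T^r$, leveraging the invariants (\ref{eq:invariant_1bis})--(\ref{eq:invariant_6bis}) together with the definitions of target matchings and witnesses. I will treat (a) first as a purely structural statement, then use it as a workhorse to prove (b)--(d), which describe how particular cycles in $\mathcal C_y \setminus \mathcal C_r$ sit relative to the components of $\mathcal T^r$.

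For (a), invariant (\ref{eq:invariant_1bis}) already supplies one node $v \in V(K) \cap T(r)$; the task is to produce a second. Since $\mathcal T^r$ is the symmetric difference of two matchings, every component alternates $\mathcal M_r$-edges with non-$\mathcal M_r$-edges. If $K$ is a cycle-component, any $v \in V(K) \cap T(r)$ is incident in $K$ to an $\mathcal M_r$-edge, and invariant (\ref{eq:invariant_4bis}) places its matching partner in $T(r)$ as well. For a path-component of length at least two, either $v$ is internal (and the same matching-propagation argument applies) or $v$ is an endpoint with a non-$\mathcal M_r$ edge, in which case I walk one step further into $K$ and use invariant (\ref{eq:invariant_4bis}) on the next $\mathcal M_r$-edge. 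The only delicate case is a length-one path-component whose edge is not in $\mathcal M_r$; I would rerun the reasoning of Lemma~\ref{lem:property_target_graph}(i), using invariants (\ref{eq:invariant_5bis})--(\ref{eq:invariant_6bis}) to rule out the scenario in which only one endpoint carries a token.

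For (b), I would first observe that when $(u,u)$ is a single witness for $C$, the matching $\mathcal M_w \cap E(C)$ is already a maximum matching of $C$ exposing $u$ (since $C$ is packed by $w$ in the sense of Definition~\ref{def:witness1}), so it satisfies conditions (i) and (iii) of the target matching vacuously with respect to condition (ii); hence $M_C = \mathcal M_w \cap E(C)$. I would then show by induction on the iterations of Algorithm 2 that, as long as $C \in \mathcal C_y \setminus \mathcal C_\ell$, the restriction $\mathcal M_\ell \cap E(C)$ remains equal to $M_C$: each move modifies $\mathcal M_\ell$ only along a path-component $H$ and cycles of $\mathcal C_\ell \setminus \mathcal C_y$, and invariant (\ref{eq:invariant_6bis}) keeps $u \in T(\ell)$, which together with the exposedness of $u$ in $M_C$ prevents $H$ from entering $E(C)$ away from its boundary. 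Hence $C$ is packed by $r$, and the alternating structure of $M_C \triangle (\mathcal M_r \cap E(C))$ on $C$ produces an odd path within $E(C)$ whose one endpoint is $u$, giving the required path-component of $\mathcal T^r$.

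For (c), the definition of $M_C$ forces $M_C \cap E(Q(u,v))$ to be a perfect matching of $Q(u,v)$, so the edges of $Q(u,v)$ alternate between $M_C$ and non-$M_C$ edges with both $u$ and $v$ matched by $M_C$. A similar induction to that of (b) shows that the moves of Algorithm 2 never split $Q(u,v)$ across two components of $\mathcal T^\ell$: invariant (\ref{eq:invariant_6bis}) protects $u,v \in T(\ell)$ from being selected as paying nodes, and condition (ii) of Definition~\ref{def:set_witnesses} guarantees that $u$ and $v$ belong to distinct components of $\mathcal G_w$ whenever $|E(Q(u,v))|>1$. For the degree claim at $u$, the first edge of $Q(u,v)$ gives one incidence in $K$; the second comes either from the $\mathcal M_r$-edge at $u$ (when $u$ lies in a cycle of $\mathcal C_r \setminus \mathcal C_y$ or is matched to a node in $T(r)$ via invariant (\ref{eq:invariant_4bis})) or from the symmetric case when $Q(u,v)$ is a single edge. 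Finally, for (d), any node of $V(K) \cap V(C)$ is incident in $K$ to an edge of $M_C \triangle \mathcal M_r$; combining this with part~(c) shows that whenever the witnesses of $C$ exist they must lie in the same component as $V(K) \cap V(C)$, and invariant (\ref{eq:invariant_2bis}) prevents $K$ from being witness-empty.

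The main obstacle is the inductive bookkeeping that $M_C$ remains aligned with $\mathcal M_\ell$ on the relevant portions of $E(C)$, and that witness-subpaths $Q(u,v)$ stay inside a single component of $\mathcal T^\ell$, throughout Algorithm 2. Everything else essentially falls out of the invariants once these two facts are established.
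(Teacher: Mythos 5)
Parts (a) and (b) of your proposal follow the paper's route, but both contain local slips. In (a), your fallback for an endpoint of a path-component incident only to a non-$\mathcal M_r$ edge ("walk one step further and use invariant (\ref{eq:invariant_4bis})") does not produce a second token: (\ref{eq:invariant_4bis}) only says the two endpoints of a matching edge of the target graph agree, so applying it to an edge neither of whose endpoints is known to hold a token gives nothing. The point you are missing (which also disposes of your "delicate" length-one case) is that at $r$ we have $\mathcal C_r\subseteq\mathcal C_y$, so a node of $T(r)$ lying in a component $K$ of $\mathcal T^r$ cannot sit on a cycle of $\mathcal C_r$ (it would have degree zero in $\mathcal T^r$); hence it is matched by $\mathcal M_r$, that edge lies in $E(\mathcal T^r)$, and (\ref{eq:invariant_4bis}) applies directly. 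In (b), your packedness induction is in the right spirit, but the concluding sentence is wrong: once $\mathcal M_r\cap E(C)=M_C$, the set $M_C\,\Delta\,(\mathcal M_r\cap E(C))$ is empty and no edge of $E(C)$ belongs to $\mathcal T^r$ at all; the path-component ending at $u$ is not "within $E(C)$" but starts with the $\mathcal M_r$-edge at $u$ (which exists since $u\in T(r)$ and lies outside $E(C)$, and is the unique target-graph edge at $u$).

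The genuine gaps are in (c) and (d). For (c) you try to prove the unconditional claim that Algorithm 2 never splits $Q(u,v)$ across components, arguing that $u,v$ are never chosen as paying nodes and invoking condition (ii) of Definition \ref{def:set_witnesses}. That mechanism does not work: the moves of Algorithm 2 toggle an entire least-intersecting component $H$, and nothing prevents the component containing $Q(u,v)$ from being chosen as $H$ for some cycle of $\mathcal C_\ell\setminus\mathcal C_y$ (for instance in Step 2.2, when its other endpoint is a single witness); in that case the $M_C$-edges of $Q(u,v)$ enter $\mathcal M_\ell$, drop out of the target graph, and $u$ may end up with degree zero in $\mathcal T^r$ -- which is precisely why statement (c) is conditional on $u\in V(K)$. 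The paper's proof uses that hypothesis: since $u\in V(K)$, the $M_C$-edge at $u$ is still in $E(\mathcal T^r)$, hence no move ever involved it, hence no move involved the component of $\mathcal T^w$ containing $Q(u,v)$ (which is a single component because $Q(u,v)$ is $\tilde M_w$-augmenting and perfectly matched by $M_C$), so $Q(u,v)\subseteq K$. Your degree count is also off: at $r$ the set $\mathcal C_r\setminus\mathcal C_y$ is empty, and a node on a cycle of $\mathcal C_r$ is incident to no $\mathcal M_r$-edge; the second incidence at $u$ comes from the $\mathcal M_r$-edge guaranteed by $u\in T(r)$ and the node-disjointness of the cycles of $\mathcal C_y$. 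For (d) the sketch does not prove the claim: invariant (\ref{eq:invariant_2bis}) gives witnesses of two cycles, not of the particular cycle $C$ meeting $K$, and part (c) says nothing about non-witness nodes of $V(C)\cap V(K)$. The needed argument is: pick $e\in E(K)\setminus E(C)$ with an endpoint $u\in V(C)$ (it exists since $K$ is an even and $C$ an odd cycle); $u$ is unmatched by $\mathcal M_y$ and by every target matching except $M_C\subseteq E(C)$, so $e\in\mathcal M_r$; if $K$ contained no witness of $C$, then $u$ -- which by node-disjointness of $\mathcal C_y$ cannot witness any other cycle -- would not be in $T(r)$ because $K$ is dangerous, so (\ref{eq:invariant_4bis}) gives $e\in\tilde M_r$ and (\ref{eq:invariant_5bis}) forces $e\in E(\mathcal C_y)$, contradicting $e\notin E(C)$. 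None of this chain appears in your proposal.
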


\begin{proof}
(a) follows since each component of the target graph has at least one node in $T(r)$ by invariant (\ref{eq:invariant_1bis}).
This node is the endpoint of some edge $e$ in $\mathcal M_{r}$, and the other endpoint of $e$ also is in $T(r)$ by invariant (\ref{eq:invariant_4bis}).

Let us now argue about (b). Let $C\in \mathcal C_y \setminus \mathcal C_{r}$ be a cycle with a single witness $(u,u) \in \mathcal W$. Then
$C$ was packed by $w$. This implies that all the edges of $\mathcal M_w \cap E(C)$ were not present in any target graph
$\mathcal T^{\ell}$, for any $\ell$ visited by Algorithm 2, and therefore $\mathcal M_w \cap E(C) = \mathcal M_r \cap E(C)$,
implying that $C$ is packed by $r$. Since $u$ is in $T(r)$ by invariant (\ref{eq:invariant_6bis}),  
$u$ is an endpoint of an edge in $\mathcal M_r$, i.e. $u$ belongs to a component of $\mathcal T^r$.

To see (c), let $C\in \mathcal C_y \setminus \mathcal C_{r}$ be a cycle with a pair of witnesses $(u,v) \in \mathcal W$, $u\neq v$, and
let $u$ be a node of a component $K$ of  $\mathcal T^r$. By the definition of target matching, $u$ is incident into an edge $e$ of the target matching
$M_C$. By invariant (\ref{eq:invariant_6bis}), $u$ is in $T(r)$ and so $u$ is an endpoint of an edge in $\mathcal M_r$, 
implying that $u$ is incident into two edges of $K$. Furthermore, by definition of target matching, $e$ is an edge of $Q(u,v)$,
and $Q(u,v)$ is an $\tilde M_w$-augmenting path. This implies that $Q(u,v)$ is contained in a component of $\mathcal T^{w}$, 
and that we never performed a move involving $e$ during the execution of Algorithm 2 (otherwise, one can see that $e$ would be in $M_C \cap \mathcal M_{r}$, implying $e \notin E(\mathcal T^r)$). It follows that $Q(u,v)$ is contained a component of $\mathcal T^{\ell}$ for all vertices $\ell$ visited by Algorithm 2 up to $r$, and
in particular, $Q(u,v)$ is contained the component $K$ of $\mathcal T^{r}$. By the same argument we used for $u$, $v$ is incident into two edges of $K$.

Finally, we prove (d). Assume that $K$ does not contain any witness of $C$.
Let $e \in E(K)$ be an edge with (at least) one endpoint $u$ in $V(C)$, such that $e \notin E(C)$. Note that such edge must exist, 
since $C$ is an odd cycle, and $K$ is an even cycle. Necessarily $e \in \mathcal M_{r}$. 
However, $u$ is not a witness of $C$, and since $K$ is dangerous, $u \notin T(r)$.
By invariant (\ref{eq:invariant_4bis}), both endpoints of $e$ are not in $T(r)$, i.e. $e \in \tilde M_r$.  
However, this contradicts invariant (\ref{eq:invariant_5bis}).
\end{proof}

\subsection{Moving from $r$ to $y$} Given $r$ from the previous procedure, we move to $y$ using Algorithm 3. 
In Step 2, we perform moves involving path-components of the target graph, and
in Step 3, we perform moves involving non-dangerous cycle-components of the target graph. In both cases, 
each component has (at least) two nodes with an available token to pay for the move. 
In particular, for a path-component,
if any of these nodes is a single witness of some cycle $C \in \mathcal C_y\setminus \mathcal C_{\ell}$,  
then the cycle will appear in the support of next vertex.

Dangerous cycle-components are considered last, in Step 4,
and for each such component $K$, we perform two moves.
Roughly speaking, a move which switches
the coordinate values along the edges of $E(K)$ would require
two tokens which are witnesses of some cycle $C \in \mathcal C_y\setminus \mathcal C_{\ell}$, but 
differently from dangerous path-components, here 
such move will not make $C$ appearing in the support of next vertex.
Therefore, we first make a move which guarantees that two distinct
cycles $C$ and $\bar C$ becomes packed, and then perform a second move which  
guarantees that both $C$ and $\bar C$ appear in the support of our vertex.
We use the tokens of the two witnesses of $C$ to pay for the first move, 
and the tokens of the two witnesses of $\bar C$ to pay for the second move.

To describe our moves formally, we introduce one definition (see Figure~\ref{fig:dang} for an illustration).

\begin{definition}
\label{def:adjacency}
Let $K$ be a dangerous cycle-component of $\mathcal T^{\ell}$, and let $C,\bar C \in \mathcal C_y\setminus \mathcal C_{\ell}$ 
be two distinct cycles with $V(C) \cap V(K) \neq \emptyset$ and $V(\bar C) \cap V(K) \neq \emptyset$. Let
$(u,v)$ (resp. $(\bar u, \bar v)$) be the pair of witnesses of $C$ (resp. $\bar C$).
We call a path $P$ a \emph{$K$-linking path} between $C$ and $\bar C$ if:
\begin{itemize}
\item[(i)] $P$ is an $\mathcal M_{\ell}$-augmenting path; 
\item[(ii)] $E(P) \cap E(K) = \{v,\bar v\}$,
\item[(iii)] for all $e' \in E(P)$ with $e' \neq \{v,\bar v\}$, we have $e' \in E(C\setminus Q(u,v)) \cup E(\bar C\setminus Q(\bar u, \bar v))$.
\end{itemize}
\end{definition}

\begin{figure}[H]
\begin{center} 
\vspace{-.5cm}
\includegraphics[angle=0, width=9cm, height=7cm]{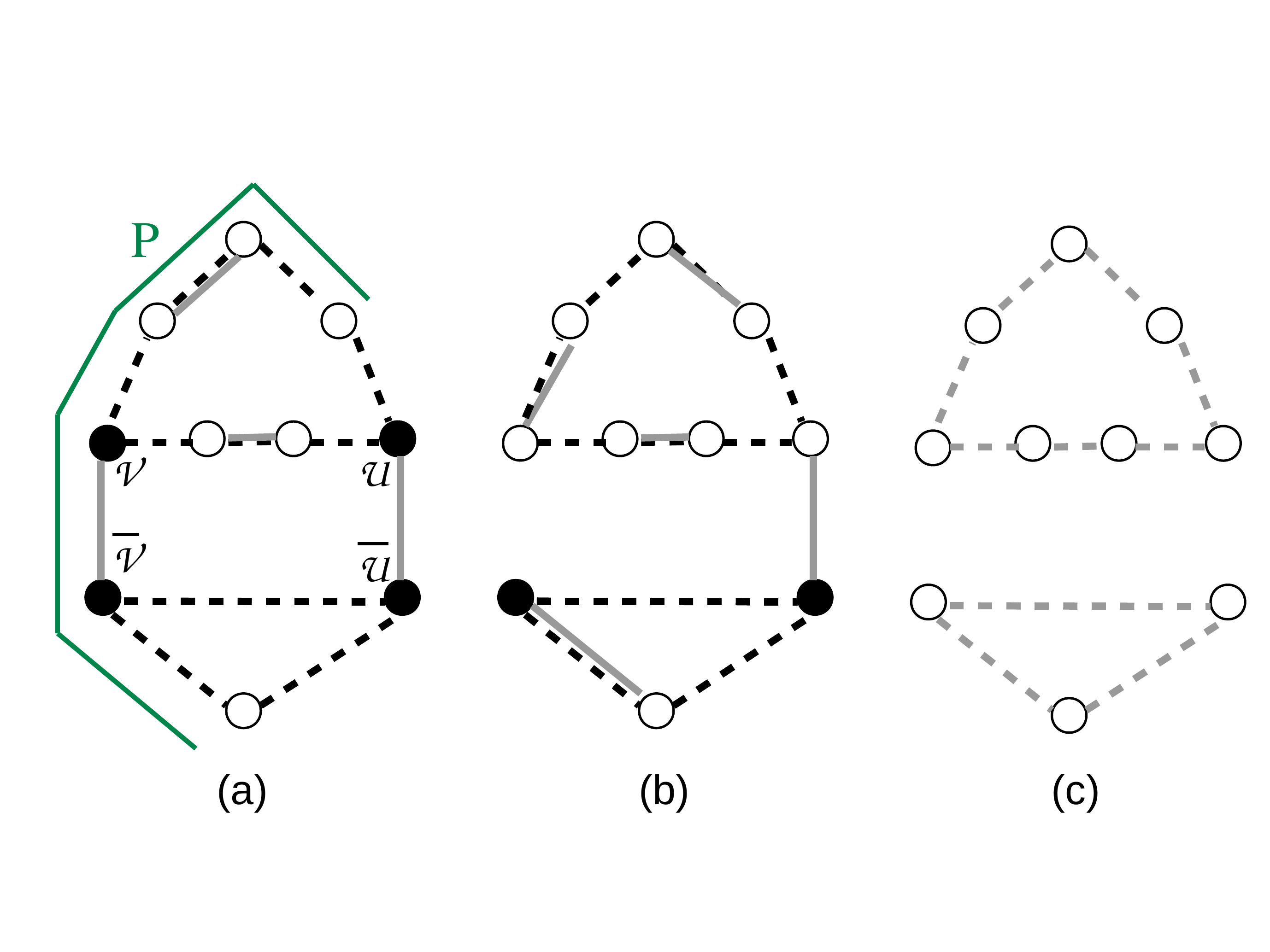}
\end{center}
\vspace{-1cm}
\caption{Two cycles in $\mathcal C_y \setminus \mathcal C_{\ell}$ intersecting a dangerous component are shown in (a).
Black edges represent the edges in the support of $y$, and gray edges represent the edges in the support of $\ell$. Continuous lines represent edges of value 1, while dashed lines represent edges of value $\frac{1}{2}$. 
Black nodes represent nodes in $T(\ell)$. 
The dangerous component is the even cycle given by the edges $\{v,\bar v\}, \{u,\bar u\}$, the odd $u$-$v$ path ($Q(u,v)$)
contained in the top dashed black cycle, and the  odd $\bar u$-$ \bar v$ path ($Q(\bar u, \bar v)$)
contained in the bottom dashed black cycle. The dark green line indicates the path $P$ as in Definition
\ref{def:adjacency}.
The figure in (b) shows how the coordinates of $\ell$ change after performing the move ($\otimes$),
while the figure in (c) shows how the coordinates of $\ell$ change after performing the move ($\oslash$).  
 }
\label{fig:dang} 
\end{figure}

\noindent
In Step 5, we perform our last set of moves, which make all remaining cycles in 
$\mathcal C_y\setminus \mathcal C_{\ell}$ appear, one by one, in the support of next vertex.

For any vertex $\ell$ visited by Algorithm 3, we will maintain invariant (\ref{eq:invariant_6bis})
as well as the following invariant:
\begin{equation}
\label{eq:invariant_final}
\mbox{If $K$ is a component of } \mathcal T^{\ell}, \mbox{ then $K$ is a component of } \mathcal T^{r} \mbox{ and } V(K)\cap T(\ell) = V(K) \cap T(r)
\end{equation}

\vspace*{.2cm}
\noindent
{\bf Algorithm 3 (from $r$ to $y$):} \hrulefill\\
1. Set $\ell := r$.

\vspace*{.1cm}
\noindent
2. While there exists a path-component $K$ of $\mathcal T^{\ell}$:
\begin{enumerate}
\item[2.1] Let $V(K) :=\{v_1, \dots, v_k\}$, let $E(K) :=\Big\{ \{v_1,v_2\} \dots, \{v_{k-1},v_k\}\Big\}$ and let $v_i$ (resp. $v_j$) be the node in $V(K) \in T(\ell)$
with the smallest (resp. biggest) index. 

\item[2.2]  Do the following move:

\vspace{.2cm}
\hspace*{.3cm}
($\oplus$) Change the coordinates of $\ell$  by setting
\begin{equation*}
\ell_e =
\begin{cases}
1 & \text{if } e \in E(H)\setminus \mathcal M_{\ell} \\
0 & \text{if } e \in E(H)\cap \mathcal M_{\ell}  \\
\frac{1}{2} & \text{if } v_i \mbox{ is a single witness for some $C \in \mathcal C_{\ell}\setminus C_y$ and } e \in E(C)\\
\frac{1}{2} & \text{if } v_j \mbox{ is a single witness for some $\bar C \in \mathcal C_{\ell}\setminus C_y$ and } e \in E(\bar C)\\
\end{cases}
\end{equation*}
\hspace{.3cm} and let the nodes $v_i$ and $v_j$ use their tokens to pay for this move;

\end{enumerate}

\vspace*{.1cm}
\noindent
3. While there exists a non-dangerous cycle-component $K$ of $\mathcal T^{\ell}$:
\begin{enumerate}
\item[3.1] Let $v$ and $u$ be two nodes in $V(K) \in T(\ell)$, 
such that $v,u$ are not witnesses in $\mathcal W$, and do the following move:

\vspace{.2cm}
\hspace*{.3cm}
($\ominus$) Change the coordinate $\ell_e$ of all the edges of $E(K)$ by setting
\begin{equation*}
\ell_e =
\begin{cases}
1 & \text{if } e \in E(H)\setminus \mathcal M_{\ell} \\
0 & \text{if } e \in E(H)\cap \mathcal M_{\ell}  \\
\end{cases}
\end{equation*}
\hspace{.3cm} and let the node $v$ and $u$ use their tokens to pay for this move;

\end{enumerate}

\vspace*{.1cm}
\noindent
4. While there exists a dangerous cycle-component $K$ of $\mathcal T^{\ell}$:
\begin{enumerate}
\item[4.1] Let $P$ be a $K$-linking path between two distinct cycles $C$ and $\bar C$ in $\mathcal C_y \setminus \mathcal C_{\ell}$.
Let $(u,v)$ be the pair of witnesses for $C$ and let $(\bar u,\bar v)$ be the pair of witnesses for $\bar C$.

\item[4.2] Do the following moves:

\vspace{.2cm}
\hspace*{.3cm}
($\otimes$) Change the coordinate $\ell_e$ of all the edges of $P$ by setting
\begin{equation*}
\ell_e =
\begin{cases}
1 & \text{if } e \in E(P)\setminus \mathcal M_{\ell} \\
0 & \text{if } e \in E(P)\cap \mathcal M_{\ell}  \\
\end{cases}
\end{equation*}
\hspace{.3cm} and let the nodes $u$ and $v$ use their tokens to pay for this move;

\vspace{.2cm}
\hspace*{.3cm}
($\oslash$) Change the coordinate $\ell_e$ of all the edges in $E(C) \cup E(\bar C) \cup (E(K)\setminus \{v, \bar v\})$ by setting
\begin{equation*}
\ell_e =
\begin{cases}
1 & \text{if } e \in (E(K)\setminus (E(P \cup C \cup \bar C))) \setminus \mathcal M_{\ell} \\
0 & \text{if } e \in (E(K)\setminus (E(C \cup \bar C)))\cap \mathcal M_{\ell}  \\
\frac{1}{2} & \text{if } e \in E(C)\cup E(\bar C)  \\
\end{cases}
\end{equation*}
\hspace{.3cm} and let the nodes $\bar u$ and $\bar v$ use their tokens to pay for this move;

\end{enumerate}

\vspace*{.1cm}
\noindent
5. While there exists a cycle $C \in \mathcal C_{y}\setminus C_{\ell}$, do the following move:

\vspace{.2cm}
\hspace*{.8cm}
($\odot$) Change the coordinate $\ell_e$ of all the edges in $E(C) $ by setting
\begin{equation*}
\ell_e = \frac{1}{2} \qquad \text{ if } e \in E(C)  
\end{equation*}
\hspace{.3cm} and let the witnesses of $C$ use their tokens to pay for this move;

\vspace*{.1cm}
\noindent
6. Output $\ell$. 

\vspace*{-.2cm}
\noindent
\hrulefill

\bigskip
\begin{lemma}
All steps of Algorithm 3 can be correctly performed, invariant (\ref{eq:invariant_6bis}) and (\ref{eq:invariant_final}) are maintained, and the vertex output in Step 6 satisfies $\ell =y$.
\end{lemma}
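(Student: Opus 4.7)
The plan is to work through the algorithm step by step, verifying at each move that (i) the structural change matches one of the adjacency cases in Lemma~\ref{cor:adjacency}, (ii) the nodes or cycles being charged still hold their tokens, and (iii) invariants~(\ref{eq:invariant_6bis}) and (\ref{eq:invariant_final}) are preserved. Correctness of the output then follows from the observation that once Steps~2--4 exhaust the target graph and Step~5 reintroduces every missing cycle in $\mathcal C_y \setminus \mathcal C_\ell$, the resulting support coincides with $\mathcal G_y$ with the correct half-integer values, hence $\ell = y$ by Theorem~\ref{thm:balinski}.

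For Steps~2 and~3, I would first use Lemma~\ref{lem:r_properties}(a) together with invariant~(\ref{eq:invariant_final}) to extract two tokened nodes in each component. In Step~2, the extreme tokened nodes $v_i,v_j$ along the path-component $K$ delimit an $\mathcal M_\ell$-alternating subpath; if one of them is a single witness the associated cycle is packed by $r$ (Lemma~\ref{lem:r_properties}(b)) and hence by $\ell$, so move~$(\oplus)$ falls under case~(b), (e), or (f) of Lemma~\ref{cor:adjacency}, depending on how many extremes are single witnesses. In Step~3, non-dangerousness of $K$ means $V(K) \cap T(\ell)$ contains a non-witness, and combined with Lemma~\ref{lem:r_properties}(a) I can find two such non-witness tokened nodes to pay for move~$(\ominus)$, which is case~(a). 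In both steps the component $K$ disappears entirely from $\mathcal T^{\bar\ell}$ while no other component is touched, so invariant~(\ref{eq:invariant_final}) is maintained and invariant~(\ref{eq:invariant_6bis}) is untouched because no witness tokens are spent.

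Step~4 is the main obstacle. I first need to show that a $K$-linking path exists. By invariant~(\ref{eq:invariant_2bis}), $K$ contains the witnesses of at least two distinct cycles $C,\bar C \in \mathcal C_y \setminus \mathcal C_\ell$; by Lemma~\ref{lem:r_properties}(c), the entire odd paths $Q(u,v)$ and $Q(\bar u,\bar v)$ are contained in $K$, and since $K$ is an even $\mathcal M_\ell$-alternating cycle the $\mathcal M_\ell$-edges incident to $v$ and $\bar v$ outside the $Q$-paths can be combined with the non-$Q$ arcs of $C$ and $\bar C$ to produce the required $\mathcal M_\ell$-augmenting path $P$. Move~$(\otimes)$ is then an augmentation along $P$, valid by Lemma~\ref{cor:adjacency}(b), and after it both $C$ and $\bar C$ become packed. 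Move~$(\oslash)$ fits case~(e) of the same lemma: the two node-disjoint packed odd cycles $C,\bar C$ together with the residual alternating path inside $E(K)\setminus E(P)$ form exactly the structure described there. The four witnesses pay for these two moves; they are available thanks to invariant~(\ref{eq:invariant_6bis}), which remains valid afterwards because $C$ and $\bar C$ now lie in $\mathcal C_\ell$ and therefore are no longer relevant for the invariant.

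Finally, once Steps~2--4 terminate, every component of the target graph has been eliminated and $\mathcal T^\ell = \emptyset$, which forces $\mathcal M_\ell = \mathcal M_y \cup \bigcup_{C \in \mathcal C_y \setminus \mathcal C_\ell} M_C$. Every remaining cycle $C \in \mathcal C_y \setminus \mathcal C_\ell$ is therefore packed by $\mathcal M_\ell$ and has its witnesses still tokened by invariant~(\ref{eq:invariant_6bis}); hence move~$(\odot)$ is valid by Lemma~\ref{cor:adjacency}(c) and is paid by those witnesses. After all remaining cycles are inserted, $\mathcal C_\ell = \mathcal C_y$ and $\mathcal M_\ell = \mathcal M_y$, giving $\ell = y$.
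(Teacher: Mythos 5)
Your outline follows the paper's overall plan (induct over moves, check each move against Lemma~\ref{cor:adjacency}, track tokens via the invariants), but it skips the step that carries most of the weight. In Step~2 you dismiss invariant~(\ref{eq:invariant_6bis}) with ``no witness tokens are spent,'' which is both inaccurate (when $v_i$ or $v_j$ is a single witness its token \emph{is} spent, harmlessly only because the witnessed cycle simultaneously enters the support via $(\oplus)$) and, more importantly, leaves open the case you must rule out: that the extreme tokened node $v_i$ belongs to a \emph{pair} of witnesses $(v_i,v_{i'})$ of some cycle $C\in\mathcal C_y\setminus\mathcal C_\ell$. That cycle does not appear in move $(\oplus)$, so spending $v_i$'s token would destroy invariant~(\ref{eq:invariant_6bis}) and leave $C$ unpayable in Step~5. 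The paper's proof devotes its central argument to precisely this point: by Lemma~\ref{lem:r_properties}(c), $Q(v_i,v_{i'})\subseteq K$ and $v_i$ has degree two in $K$; by invariant~(\ref{eq:invariant_6bis}) the other witness is tokened, forcing $i'>i$; the definition of the target matching then forces $\{v_i,v_{i+1}\}\in M_C$, hence $\{v_{i-1},v_i\}\in\mathcal M_\ell$, and invariants~(\ref{eq:invariant_final}) and~(\ref{eq:invariant_4bis}) (the latter at $r$) put $v_{i-1}\in T(\ell)$, contradicting the minimality of $i$. Without this, the token accounting of the whole lemma is not established.

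There are smaller gaps of the same nature. In Step~3, Lemma~\ref{lem:r_properties}(a) plus non-dangerousness only give two tokened nodes and one non-witness among them; to get \emph{two} non-witness tokened nodes you need the parity argument (tokened nodes of $K$ pair up along $\mathcal M_\ell$-edges by~(\ref{eq:invariant_4bis}) at $r$, and witnesses in a cycle-component come in pairs since single witnesses occur only at path endpoints by Lemma~\ref{lem:r_properties}(b)). In Step~4 you take for granted that witnesses of two \emph{distinct} cycles are joined by an $\mathcal M_\ell$-edge of $K$ and that the non-$Q$ arcs of $C$ and $\bar C$ alternate correctly with $\mathcal M_\ell$; the paper instead starts from an arbitrary tokened matching edge of $K$, shows its endpoints must witness distinct cycles (using Definition~\ref{def:set_witnesses}(ii) and the fact, transported back to $r$ via~(\ref{eq:invariant_final}), that $K$ holds witnesses of at least two cycles), and uses Lemma~\ref{lem:r_properties}(d) together with the observation that at this stage all components are dangerous cycle-components to conclude $(M_C\Delta\mathcal M_\ell)\cap E(C)=Q(u,v)$, which is what makes the two pieces of the $K$-linking path $\mathcal M_\ell$-alternating; note also that invariant~(\ref{eq:invariant_2bis}) is an invariant of Algorithm~2, so you cannot simply invoke it for $\mathcal T^\ell$ during Algorithm~3 without routing through~(\ref{eq:invariant_final}). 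Finally, in Step~5 you pay each remaining cycle with ``its witnesses,'' but a cycle with a single witness owns only one token; the paper closes this by showing every single-witness cycle already entered the support during Step~2 (Lemma~\ref{lem:r_properties}(b) plus invariant~(\ref{eq:invariant_final})), so the cycles left for Step~5 all have two tokened witnesses. These are the steps your proposal would need to fill in.
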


\begin{proof}
First, we argue that each step of Algorithm 3 can be correctly performed and that the claimed invariants 
are maintained, by induction on the number of moves performed by the algorithm.

In Step 2, we consider a path-component $K$ of $\mathcal T^{\ell}$. 
Invariant (\ref{eq:invariant_final}), which holds by the inductive hypothesis, guarantees that $K$ is a component of $\mathcal T^{r}$, and that
if any of the endpoints is a single witness of a cycle, then this cycle is packed by $\ell$. 
Therefore, the move ($\oplus$) is a valid move, according to either Lemma~\ref{cor:adjacency}(b), or Lemma~\ref{cor:adjacency}(e),
 or Lemma~\ref{cor:adjacency}(f).
Furthermore, since $V(K) \cap T(\ell) = V(K) \cap T(r)$, property (a) of Lemma~\ref{lem:r_properties}
guarantees that there are at least two nodes in $V(K)$ with an available token which can pay for the move.

We now show that the invariants are maintained.
Let $v_i$ be the node chosen in Step 2.1.
We claim that if $v_i$ is a witness, then it can only be a single witness.
Assume for a contradiction that
$v_i$ is in a pair of witnesses $(v_i, v_{i'}) \in \mathcal W$,
with $i \neq i'$ for some cycle $C \in \mathcal C_y\setminus \mathcal C_{\ell}$.
Note that, $Q(v_i, v_{i'}) \subseteq K$  
by property (c) of Lemma~\ref{lem:r_properties}. 
Since $v_{i'} \in T(\ell)$ by invariant (\ref{eq:invariant_6bis}), it follows $i'> i$. 
Then, by definition of target matching, $\{v_i, v_{i+1}\} \in M_C$, and since
$v_i $ has degree 2 in $K$ by property (c) of Lemma~\ref{lem:r_properties}, 
it follows that $\{v_{i-1}, v_{i}\} \in \mathcal M_{\ell}$. However, 
this also implies that $v_{i-1} \in T(\ell)$ (using invariant (\ref{eq:invariant_final})
 together with (\ref{eq:invariant_4bis}) which holds for $r$). We get a contradiction
 with our choice of $i$. We can apply a similar argument to $v_j$. This yields
that invariant (\ref{eq:invariant_6bis}) is maintained.
Finally, after the move is performed, the component $K$ is no longer a component of the target graph, while
all other components of the target graph remain the same. This shows that  invariant (\ref{eq:invariant_final})
is maintained.

In Step 3, we consider a non dangerous cycle-component $K$ of $\mathcal T^{\ell}$. 
The move ($\ominus$) is a valid move, according to Lemma~\ref{cor:adjacency}(a).
Since $K$ is not dangerous, there is at least one node
in $V(K) \cap T(\ell) = V(K) \cap T(r)$ that is not the witness of any cycle.
However, using invariant (\ref{eq:invariant_4bis}), which holds for $r$,
we know that $|V(K) \cap \mathcal T(\ell)|$ is even.
Furthermore, since $K$ is a cycle, $K$ also contains an even number of nodes that
are witnesses in $\mathcal W$. It follows that $K$
contains at least two nodes
in $V(K) \cap T(\ell)$ that are not witnesses of any cycle,
and therefore can pay for the move.
This shows that invariant (\ref{eq:invariant_6bis}) is maintained.
After the move is performed, the component $K$ is no longer a component of the target graph, while
all other components of the target graph remain the same. This shows that  invariant (\ref{eq:invariant_final})
is maintained.

In Step 4, we consider a dangerous cycle-component $K$ of $\mathcal T^{\ell}$. 
Let us argue that Step 4.1 can be performed, i.e. that there exists a $K$-linking path $P$. 
Let $e = \{v,\bar v\}$ be any edge in $\mathcal M_{\ell}$ with both endpoints in
$T(\ell)$. Such an edge exists, because  
invariant (\ref{eq:invariant_final}), which holds by the inductive hypothesis, guarantees that $K$ is a component of $\mathcal T^{r}$
and $V(K) \cap T(\ell) = V(K) \cap T(r)$, therefore we can rely on property (a) of Lemma~\ref{lem:r_properties},
and on invariant (\ref{eq:invariant_4bis}) which holds for $r$.
Since the component is dangerous, $v$ and $\bar v$ are both witness nodes. Let $C \in \mathcal C_y\setminus \mathcal C_{\ell}$ 
be the cycle which has $v$ as one of its witnesses. 
Note that $\bar v$ cannot be a witness for the same cycle $C$: if this was the case, then 
either (i) $\{v,\bar v\} = Q(v,\bar v)$, implying that  $\{v,\bar v\} \subseteq M_{C}$,
contradicting that the edge is in the target graph, or (ii) $K= Q(v,\bar v) \cup \{v,\bar v\}$, contradicting that
$K$ contains the witnesses of at least two distinct cycles.
 Let $\bar C$ be the cycle which has $\bar v$ as one of its witnesses.
Neither $v$ or $\bar v$ can be a single witness, since such nodes appear only in path-components, by property (b) of Lemma~\ref{lem:r_properties}.
Let $u$ and $\bar u$ be such that $(u, v) \in \mathcal W$, and $(\bar u, \bar v) \in \mathcal W$.
By invariant (\ref{eq:invariant_final}), we know all the components of 
$\mathcal T^{\ell}$ are dangerous cycle-components. By property (d) of Lemma~\ref{lem:r_properties}, 
$V(C) \cap V(K') =\emptyset$ for all other components $K' \neq K$ in $\mathcal T^{\ell}$. It follows that  
$(M_C \Delta \mathcal M_{\ell}) \cap E(C) = Q(u, v)$, where $M_C$ is the target matching of $C$.
This implies that there exists a $\mathcal M_{\ell}$-alternating path $P_1$ from $v$ to the unique $M_C$-exposed node in $V(C)$,
which does not use edges in $Q(u,v)$ (refer again to Figure~\ref{fig:dang}). The same argument shows that  there exists a $\mathcal M_{\ell}$-alternating path $P_2$ from $\bar v$ to the unique $M_{\bar C}$-exposed node in $V(\bar C)$,
which does not use edges in $Q(\bar u, \bar v)$. Combining $P_1$, $\{v \bar v\}$, and $P_2$, yields a $K$-linking path $P$.
The move ($\otimes$) is then a valid move, according to Lemma~\ref{cor:adjacency}(b), 
and $u$ and $v$ have their token available by invariant (\ref{eq:invariant_6bis}). 

After the move ($\otimes$) is performed, $C$ becomes packed, with $u$ being its unique ($\mathcal M_{\ell} \cap E(C)$)-exposed node,
and $\bar C$ becomes packed, with $\bar u$ being its unique ($\mathcal M_{\ell} \cap E(\bar C)$)-exposed node.
The move ($\oslash$) is a then valid move, according to Lemma~\ref{cor:adjacency}(f).
The nodes $\bar u$ and $\bar v$ have their token available by invariant (\ref{eq:invariant_6bis}). 
After the move ($\oslash$) is performed, the component $K$ is no longer a component of the target graph, while
all other components of the target graph remain the same. This shows that  invariant (\ref{eq:invariant_final})
is maintained. Since both $C$ and $\bar C$ are now in the support of the current vertex, invariant (\ref{eq:invariant_6bis}) is maintained.

Let $\ell$ be the vertex after Step 4 terminates. Note that all cycles in $\mathcal C_y \setminus \mathcal C_{\ell}$ are packed
by $\mathcal M_{\ell}$, and $\mathcal M_{\ell} \cap \mathcal M_y = \mathcal M_y$. 
All cycles in $\mathcal C_y\setminus \mathcal C_{w}$ which had a unique witness in $\mathcal W$, have appeared 
in the support of the current vertex during 
some iteration of Step 2, because  of
invariant (\ref{eq:invariant_final}) together 
with property (b) of Lemma~\ref{lem:r_properties}.
Therefore, all cycles in $\mathcal C_y \setminus \mathcal C_{\ell}$ 
have two distinct witnesses, which have their token
available at the beginning of Step 5 because of invariant (\ref{eq:invariant_6bis}). 
It follows that every move ($\odot$), which is a valid move by Lemma~\ref{cor:adjacency}(c), can be paid.
It is easy to see then that the output vertex is $y$.
\end{proof}

\subsection{Lower bound}
\label{sec:lower_bound}
We here argue that the quantity $\max_{x \in vert(\mathcal P_{FM})} \{ {\bf 1}^Tx + \frac{|\mathcal C_x|}{2}\}$ is a lower bound on the value of the diameter of $\mathcal P_{FM}$. 

Let $w$ be the vertex at which the above maximum value is achieved. We will show that the distance between the vertex $w$ and the ${\bf 0}$-vertex (i.e. the vertex corresponding to an empty matching), is at least  ${\bf 1}^Tw + \frac{|\mathcal C_{w}|}{2}$.

Suppose to introduce a non-negative slack variable for  each inequality of the form
$x(\delta(v)) \leq 1$ of $\mathcal P_{FM}$.
We get a polytope that naturally corresponds to the set of feasible solutions of a fractional \emph{perfect} matching problem
on a modified graph $\bar G$, defined as follows. Let $\bar G=(V,\bar E)$ be the graph obtained from $G$ by adding a loop edge on each node $v \in V$. We let $\bar E := E \cup L$, with $L$ being the set of loop edges introduced. We can interpret the slack variable associated to a node $v$ as the variable associated to its loop edge $e_v$, and define $\delta_{\bar G}(v) := \delta(v) \cup e_v$ (note that the loop edge is counted once
in this set). We get

$$
\bar{\mathcal{P}}_{FM} :=\{\bar x \in \mathbb R^{\bar E }: \sum_{e \in \delta_{\bar G}(v)} \bar x_e =1, \; \forall v \in V, \; \bar x\geq 0\}
$$

There is a one-to-one correspondence between vertices of $\bar{\mathcal{P}}_{FM}$ and ${\mathcal{P}}_{FM}$.
For a vertex $x$ of $\mathcal{P}_{FM}$ we let $\bar x$ denote the correspondent vertex of $\bar{\mathcal{P}}_{FM}$, and vice versa.
Note that two vertices $x$ and $y$ are adjacent vertices of $\mathcal{P}_{FM}$ if and only if $\bar x$ and $\bar y$ are adjacent vertices of 
$\bar{\mathcal{P}}_{FM}$.

Let $\bar x$ be the vertex of $\bar{\mathcal{P}}_{FM}$ corresponding to $x = {\bf 0}$, i.e. the empty matching in $G$, and
$\bar w$ be the vertex of $\bar{\mathcal{P}}_{FM}$ corresponding to $w$.
The support graph of $\bar x$, denoted by $\bar{\mathcal{G}}_{\bar x}$ contains $|V|$ odd cycles, all of unit length, given by the $|V|$
loop edges. 
The support graph of $\bar w$, denoted by $\bar{\mathcal{G}}_{\bar w}$ contains $|\mathcal C_w| + |V\setminus V(\mathcal G_w)|$
odd cycles: there are $|\mathcal C_w|$ odd cycles of length at least 3, and $|V\setminus V(\mathcal G_w)|$ odd cycles
of unit length, given by loop edges associated to nodes that are not in the support graph $\mathcal G_w$ of $w$.

The following claim will be used to give a lower bound on the distance of two vertices of $\bar{\mathcal{P}}_{FM}$, which depends on
the number of odd cycles that are not in common in the support graphs of the vertices. 

\medskip
\noindent
\emph{Claim.} 
Let $\bar y$ and $\bar z$ be two adjacent vertices of $\bar{\mathcal{P}}_{FM}$. Let $\bar{\mathcal C}_{\bar y}$ be the set of odd cycles in 
the support graph of $\bar y$, and $\bar{\mathcal C}_{\bar z}$ be the set of odd cycles in 
the support graph of $\bar z$. Then $|\bar{\mathcal C}_{\bar y} \Delta \bar{\mathcal C}_{\bar z}| \leq 2$.

\medskip
\noindent
\emph{Proof of claim.}
A proof of the above lemma can be derived from the results given in \cite{B13}. We report the details for completeness.

First we claim that there is at most one component of 
$\bar{\mathcal G}_{\bar y} \cup \bar{\mathcal G}_{\bar z}$
that contains an edge $f$ with $\bar y_f \neq \bar z_f$.
Suppose for a contradiction that there exist two such components, namely $K_1$ and $K_2$.
Let $\tilde z$ be the point defined as $\tilde z_e = \bar z_e$ for all $e \in \bar E\setminus E(K_1)$,
and $\tilde z_e = \bar y_e$ for all $e \in E(K_1)$. Similarly,
let  $\tilde y$ be the point defined as $\tilde y_e = \bar z_e$ for all $e \in \bar E\setminus E(K_2)$,
and $\tilde y_e = \bar y_e$ for all $e \in E(K_2)$. It is not difficult to see that
$\tilde z$ and $\tilde y$ are both vertices of $\bar{\mathcal{P}}_{FM}$.
However, $\frac{1}{2} \bar z + \frac{1}{2} \bar y = \frac{1}{2} \tilde z + \frac{1}{2} \tilde y$.
This is a contradiction, since if two vertices of a polytope are adjacent, there is
a unique way to express their midpoint as a convex combination of vertices.
  
Let $K$ be the component of $\mathcal G_{\bar y} \cup \mathcal G_{\bar z}$
which contains an edge $f$ with $\bar y_f \neq \bar z_f$, and
let $k$ be the number of nodes of this component. If $K$ has at most
$k+1$ edges, then $K$ can be seen to be a tree spanning its $k$ nodes,
plus two additional (possibly loop) edges: it is easy to realize then that $K$ can have at most
2 odd cycles. We are left to show that $K$ contains at most $k+1$ edges.

Let $\bar G[V(K)]$ be the subgraph of $\bar G$ induced by the nodes in $V(K)$.
Consider the fractional perfect matching polytope
associated to the graph $\bar G[V(K)]$.
Now one can see that $\bar z|_{E(\bar G[V(K)])} \in \mathbb R^{E(\bar G[V(K)])}$, obtained from
the vector $\bar z$ by taking only the coordinates in $E(\bar G[V(K)])$,
is a vertex of this polytope, and the same holds for 
$\bar y|_{E(\bar G[V(K)])}$ (defined similarly). Furthermore, these vertices must be adjacent if
$\bar z$ and $\bar y$ are adjacent.

Let $A$ be the incidence matrix of the graph $\bar G[V(K)]$, where we have a row for every
node of $V(K)$, and a column for every edge of $\bar G[V(K)]$. Note that the column associated
to a loop edge has only one non-zero entry. 
Clearly, the rank of $A$ is $|V(K)| =k$.
Furthermore, the constraint matrix of the polytope $\bar{\mathcal{P}}_{FM}$ for the graph
$\bar G[V(K)]$
is $\begin{pmatrix}A\\ I\end{pmatrix} $, with $I$ being the identity matrix of order $E(\bar G[V(K)])$.
Let $M$ be the submatrix of $\begin{pmatrix}A\\ I\end{pmatrix} $ corresponding to the constraints that 
are tight for both $\bar y|_{E(\bar G[V(K)])}$ and $\bar z|_{E(\bar G[V(K)])}$. 
Since $\bar y|_{E(\bar G[V(K)])}$ and $\bar z|_{E(\bar G[V(K)])}$ are adjacent vertices, the rank of $M$
is equal to $|E(\bar G[V(K)])|-1$. Note that $M$ contains $A$ as a submatrix, and
since the rank of $A$ is $k$, it follows that
at least $|E(\bar G[V(K)])|-1 -k$ variables are zero in both $\bar z|_{E(\bar G[V(K)])}$ and $\bar y|_{E(\bar G[V(K)])}$, 
and therefore the union of their support graphs
contains at most $k + 1$ edges.  
\hfill $\square$ 

\smallskip
Now let us discuss how the claim implies our desired lower bound.
By the above claim, if $\bar y$ and $\bar z$ are two (non necessarily adjacent) vertices of $\bar{\mathcal{P}}_{FM}$, then
the quantity $\frac{|\bar{\mathcal C}_{\bar y} \Delta \bar{\mathcal C}_{\bar z}|}{2}$ is a lower bound
on the distance between $\bar y$ and $\bar z$ on the 1-skeleton of $\bar{\mathcal{P}}_{FM}$, since the size of the symmetric difference
of the sets of odd cycles can be reduced by at most 2 at each move. 

We can use this to bound the number of moves needed on a path from $x$ to $w$. 
The distance between $x$ and $w$ on the 1-skeleton of  ${\mathcal{P}}_{FM}$ is equal 
to the distance between $\bar x$ and $\bar w$ on the 1-skeleton of  $\bar{\mathcal{P}}_{FM}$.
The distance between $\bar x$ and $\bar w$ is at least the cardinality of the symmetric difference
of the odd cycles in their support graphs divided by 2, i.e., $(|\mathcal C_w| + |V(\mathcal G_{w})|)/2$.
Note that $\frac{|V(\mathcal G_{w})|}{2} =\sum_{v \in V(\mathcal G_{w})}\frac{1}{2} = {\bf 1}^T w$.
It follows that the distance between $x$ and $w$ is at least ${\bf 1}^T w + \frac{|\mathcal C_w|}{2}$, as desired.

\section{Hardness of computing the diameter of $\mathcal P_{FM}$}
With Theorem \ref{thm:main_result} at hands,  one can easily
prove that computing the diameter of a polytope is strongly NP-hard.

\begin{proof} \emph{(Proof of Theorem \ref{thm:second_result})}
We give an easy reduction from the \emph{Partition Into Triangles} (PIT) problem. In an instance the PIT problem, we are given a simple graph $G=(V,E)$ with $|V| = 3q$ for some integer $q>0$. We want to decide whether there exists a partition of $V$ into $q$ sets $V_1, \dots, V_q$, each containing 3 nodes, with the property that the subgraph of $G$ induced by every set $V_i$ is a triangle. This problem is strongly NP-hard \cite{GJ79}. 

Let $G=(V,E)$ be an instance of PIT, and $\mathcal P_{FM}$ the fractional matching polytope associated to $G$. We claim that the $diam(\mathcal P_{FM}) = \frac{2}{3}|V|$ if and only if there is a yes-answer to the PIT instance described by $G$, i.e. the edges of $G$ allow for a partition of $V$ into triangles.

Suppose that the diameter of $\mathcal P_{FM}$ is $\frac{2}{3}|V|$. From Theorem \ref{thm:main_result}, it follows that there exists a vertex $x$ of $\mathcal P_{FM}$
such that $diam(\mathcal P_{FM}) = {\bf 1}^T x + \frac{|\mathcal C_x|}{2}$. Note that for any $y \in vert(\mathcal P_{FM})$, we have ${\bf 1}^T y \leq \frac{|V|}{2}$ and $|\mathcal C_y| \leq \frac{|V|}{3}$. Therefore, ${\bf 1}^T x + \frac{|\mathcal C_x|}{2} = \frac{2}{3}|V|$ implies that ${\bf 1}^T x = |V|/2$ and $|\mathcal C_x| = |V|/3$. Since $\mathcal C_x$ is a set of node-disjoint odd cycles of a simple graph, it follows that $\mathcal C_x$ contains a set of $|V|/3 =q$ triangles.

Suppose $G$ is a yes-instance, and let $T \subseteq E$ be the set of edges of the triangles induced by the sets $V_i$, for $i=1, \dots, q$.
We construct a fractional matching $x$ by letting $x_e = \frac{1}{2}$ for all $e \in T$, and $x_e =0$ otherwise. 
By construction, ${\bf 1}^Tx = \frac{|V|}{2}$ and $|\mathcal C_x| =\frac{|V|}{3}$, and $x$ is indeed a vertex of $\mathcal P_{FM}$ according to Theorem \ref{thm:balinski}.
As we mentioned before, for any $y \in vert(\mathcal P_{FM})$, we have ${\bf 1}^T y \leq \frac{|V|}{2}$ and $|\mathcal C_y| \leq \frac{|V|}{3}$. Therefore, by Theorem \ref{thm:main_result}, it follows that $diam(\mathcal P_{FM}) = {\bf 1}^T x + \frac{|\mathcal C_x|}{2}=  \frac{2}{3}|V|$.
\end{proof}

 With some extra effort, we can strengthen the above result to show APX-hardness. 
 
 \begin{proof}(\emph{Proof of Theorem \ref{thm:third_result}})
We will show an L-reduction from the optimization version of the problem PIT in graphs with \emph{bounded} degree, i.e. in graphs where the maximum degree $\delta$ of a node
is constant. The optimization version of the problem asks for a set of node-disjoint induced triangles of a graph $G$
of maximum cardinality. This problem is APX-hard \cite{K91}.

Suppose to be given an instance of PIT in a graph $G=(V,E)$ with maximum degree $\delta$. Without loss of generality,
we will assume that each node $v \in G$ is contained in at least one triangle, as otherwise we can just remove that node and its incident edges from the graph. We construct a graph $G'=(V',E')$ as follows (refer to Figure~\ref{fig:G_Gprime}).
We start by setting $V_1 :=\emptyset$ and $E_1 :=\emptyset$.
For each $v \in V$, we add a corresponding node $v$ in $V_1$.
For each induced triangle $t$ of $G$ with nodes $u,v,z$, we add to $V_1$ the set of nodes 
$\{t_1, t_2, t_3, t_4, t_5, t_6, t_7, t_8, t_9\}$  and to $E_1$ the set of edges 
$\Big\{ \{u, t_1\}$, $\{u, t_2\}$, $\{t_1,t_2\}$, $ \{v, t_4\}$, $\{v, t_5\}$, $\{t_4,t_5\}$, $\{z, t_7\}$, $\{z, t_8\}$, $\{t_7,t_8\}$, $
\{t_3, t_1\}$, $\{t_3, t_2\}$, $\{t_4, t_6\}$, $\{t_5, t_6\}$, $\{t_7, t_9\}$, $\{t_8, t_9\}$, $\{t_3, t_6\}$, $\{t_6, t_9\}$, $\{t_3, t_9\} \Big\}$. 
We then take a copy of the graph $(V_1,E_1)$ constructed so far, and denote by $\bar u \in V_2$ the copy of a node $u \in V_1$.
Let the nodes and edges of this copy be $V_2$ and $E_2$, respectively.
We let $V':= V_1 \cup V_2$, and $E':= E_1 \cup E_2 \cup \Big\{ \{v,\bar v\} \mbox{ for all } v \in V \Big\}$.
Let $\mathcal P_{FM}$ be the fractional matching polytope associated to $G'$.
Let $opt_G$ be the value of an optimal solution of the PIT instance, and $opt_{G'}$ be the value of the diameter of
$\mathcal P_{FM}$.

\begin{figure}
\begin{center} 
\includegraphics[angle=0, width=12cm, height=8cm]{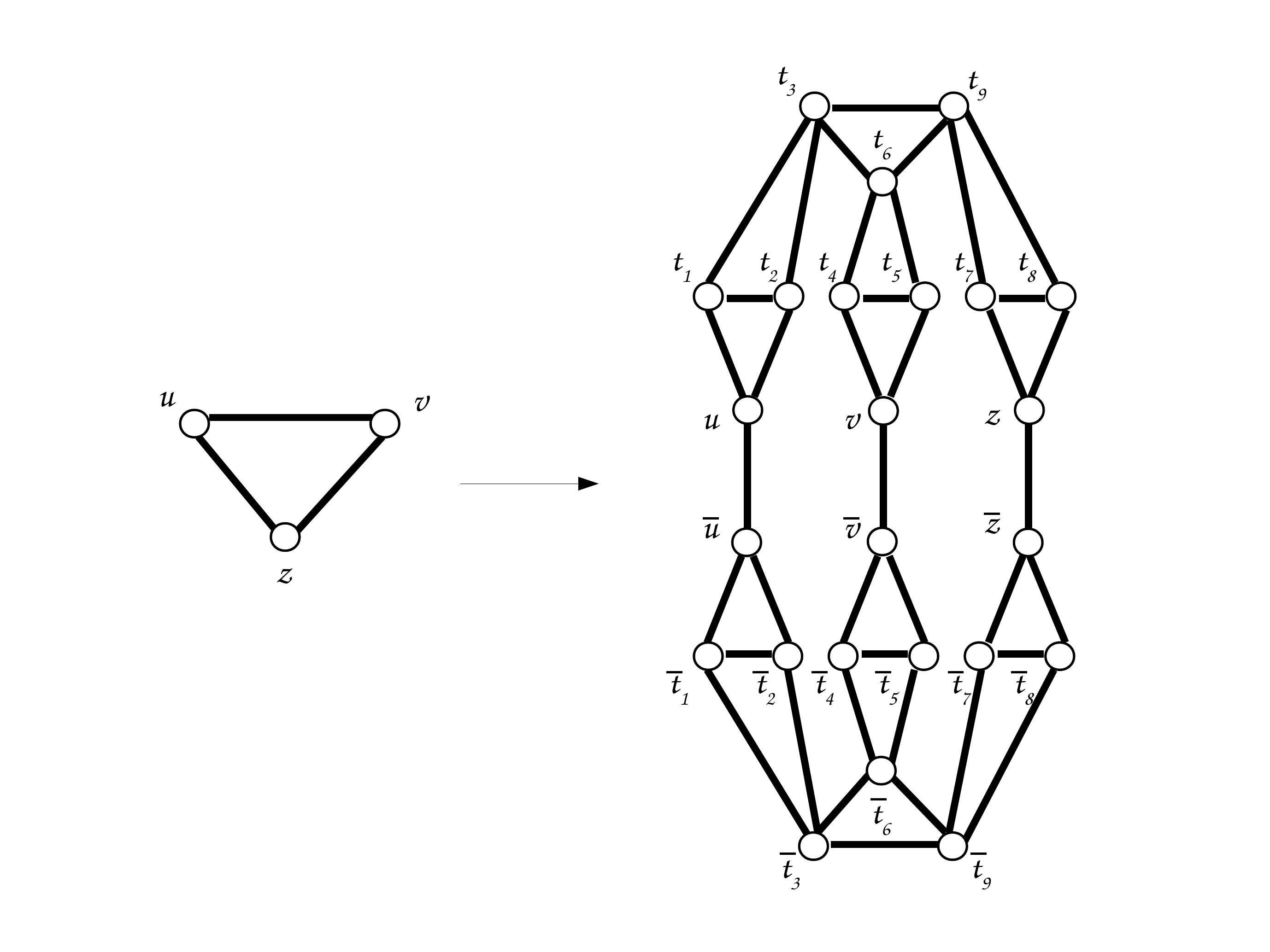}
\end{center}
\caption{Constructing $G'$ from $G$.}
\label{fig:G_Gprime} 
\end{figure}

\bigskip
\noindent
\emph{Claim.} We have $opt_{G'} \leq a \cdot opt_G$, for some constant $a=O(1)$.

\medskip
\noindent
\emph{Proof of Claim.}
Recall that $opt_{G'} = {\bf 1}^T x + \frac{|\mathcal C_x|}{2}$ for some vertex $x$ of $\mathcal P_{FM}$, and
that ${\bf 1}^Tx \leq \frac{|V'|}{2}$, while $\frac{|\mathcal C_x|}{2} \leq \frac{|V'|}{6}$, implying 
$opt_{G'} =O(|V'|)$. Let $k_{tot}$ be the total number of triangles in $G$. Then
$|V'| = 2|V| + 18 \cdot k_{tot}$, and $k_{tot} = O(|V|)$ since each node $v \in V$ can be in at most $\delta^2 =O(1)$ distinct triangles.
It follows that $opt_{G'} =O(|V|)$. We will now argue that $opt_G =\Omega(|V|)$, implying the claim.

Consider a greedy solution $T$ to the PIT instance constructed by doing a sequence of iterations, as follows. 
Start with $T:= \emptyset$.
We call a triangle $t$ of $G$ \emph{bad} for $T$, if $T$ contains
at least one triangle that is not node-disjoint with $t$. 
Similarly, we call a node $v$ \emph{bad} for $T$ if all triangles containing $v$ in $G$ are bad for $T$.
Initialize $B=\emptyset$ to be the set of bad nodes for $T=\emptyset$. Note that 
$B=\emptyset$ because we are assuming that each node in $G$ is contained in at least one triangle.
In each iteration, add an arbitrary triangle $t$ to $T$ that is node-disjoint from all the triangles in the current solution, and
add to $B$ all nodes that become bad in this iteration.
Each time a new triangle $t$ is added to $T$, there are at most $3\delta$ nodes 
that become bad, i.e., the nodes of $t$ plus (possibly) all their neighbors. When the algorithm stops,
the set $B$ contains all nodes in $V$. For this to happen, we need that the number $i$ of iterations
performed by the algorithm is at least $i\geq \frac{|V|}{3\delta}$, implying that our greedy solution
contains $i=\Omega(|V|)$ triangles. Since this is a feasible solution to the PIT instance, we can conclude that 
$opt_G =\Omega(|V|)$ as well. 
\hfill $\diamond$ 

\bigskip
\noindent
\emph{Claim.} Given a pair of vertices at distance $k'$ on the 1-skeleton of $\mathcal P_{FM}$, we can find
in polynomial time $k$ node-disjoint triangles in $G$ such that 
$opt_G -k \leq opt_{G'}-k'$.

\medskip
\noindent
\emph{Proof of Claim.}
Given a pair of vertices at distance $k'$ on the 1-skeleton of $\mathcal P_{FM}$,
by applying Algorithm 1, we can find a vertex $w$ such that $k' \leq {\bf 1}^T w + \frac{|\mathcal C_w|}{2}$. 
We will now compute in polynomial-time a vertex $x$ of $\mathcal P_{FM}$ with the property that
$ {\bf 1}^T x + \frac{|\mathcal C_x|}{2} =  {\bf 1}^T w + \frac{|\mathcal C_w|}{2}$, and in addition, all cycles in 
$\mathcal C_x$ are triangles. We can do this by removing all non-triangle cycles in $\mathcal C_w$, one at the time.

 Let $C \in \mathcal C_w$ be a cycle that is not a triangle (if none exists, $w=x)$. It is not difficult to see 
 that $C$ contains at least one edge of the form $\{t_3,t_6\},\{t_3,t_9\}$ or $\{t_6,t_9\}$
 for some triangle $t$ in $G$.
 Without loss of generality, let us assume that $C$ contains the edge $\{t_3,t_6\}$.

 Suppose that $C$ contains also the edge $\{t_6,t_9\}$.
 Then, let $P$ be the (odd) path obtained by removing from $C$ the nodes $t_3,t_6,t_9$ and their incident edges, 
 let $M$ be a maximum matching of $P$, and
 let $x'$ be the vertex defined as follows:

 \begin{equation*}
 x_e' =
 \begin{cases}
 \frac{1}{2} & \text{if } e \in \big\{ \{t_3,t_6\},\{t_3,t_9\}, \{t_9,t_6\} \big\}  \\
 1 & \text{if } e \in M \\
 0 & \text{if } e \in E(P)\setminus M \\
 w_e & \text{otherwise }  \\
 \end{cases}
 \end{equation*}
 One can see that ${\bf 1}^T x' + \frac{|\mathcal C_{x'}|}{2} = {\bf 1}^T w + \frac{|\mathcal C_w|}{2}$, but $x'$ contains one less
 non-triangle cycle in its support.
 Suppose now that $C$ does not contain $\{t_6,t_9\}$, and therefore it has to contain one edge between $\{t_4,t_6\}$ and $\{t_5,t_6\}$,
 say $\{t_4,t_6\}$. If $\{t_4,t_5\} \in E(C)$, then by letting $P$ be the (odd) path obtained by removing from $C$ the nodes $t_6,t_4,t_5$ and their incident edges, and $M$ be a maximum matching of $P$, we can define a vertex $x'$ as before that again has one less non-triangle cycle in its support.
 Suppose then that $\{t_4,t_5\} \notin E(C)$, and therefore $t_5 \notin V(\mathcal G_w)$. In this case, let $P$ be the (odd) path
 obtained by removing the node $t_4$ from $C$ 
 and its incident edges, 
 let $M$ be a maximum matching of $P$, and
 let $x'$ be the vertex defined as follows:

 \begin{equation*}
 x_e' =
 \begin{cases}
 1 & \text{if } e \in M \cup \{t_4,t_5\} \\
 0 & \text{if } e \in E(P)\setminus M \\
 w_e & \text{otherwise }  \\
 \end{cases}
 \end{equation*}
 One can see that ${\bf 1}^T x' + \frac{|\mathcal C_{x'}|}{2} = {\bf 1}^T w + \frac{|\mathcal C_w|}{2}$, and $x'$ contains one less
 non-triangle cycle in its support.

By repeating the above argument, we can compute a vertex $x$ with the property that all cycles in $\mathcal C_x$ are  triangles, and
\begin{equation}
\label{eq:first}
  {\bf 1}^T x + \frac{|\mathcal C_{x}|}{2} \geq k'
\end{equation}

Necessarily, each $C \in \mathcal C_x$ satisfies either $E(C) \subseteq E_1$
 or $E(C)\subseteq E_2$. Let $\mathcal C^1_x \subseteq \mathcal C_x$ be the set of triangles in $\mathcal C_x$ with edges in $E_1$, and $\mathcal C^2_x \subseteq \mathcal C_x$ be  
 the set of triangles in $\mathcal C_x$ with edges in $E_2$. 
 Note that, for each triangle $t$ in $G$ with nodes $u,v,z$, there can be at most 4 triangles in $\mathcal C^1_x$
whose nodes are in the set $\{t_1, \dots, t_9,u,v,z\}$, and at most 
4 triangles in $\mathcal C^2_x$
whose nodes are in the set $\{\bar t_1, \dots, \bar t_9, \bar u, \bar v, \bar z\}$.
Let $k_1$ be the number of triangles $t$ of $G$ 
 that satisfy the following property: $\mathcal C^1_x$ contains 4 triangles
whose nodes are in the set $\{t_1, \dots, t_9,u,v,z\}$. 
Similarly, let $k_2$ be the number of triangles $t$ of $G$ 
 that satisfy the following property: $\mathcal C^2_x$ contains 4 triangles
whose nodes are in the set $\{\bar t_1, \dots, \bar t_9, \bar u, \bar v, \bar z\}$. 
Without loss of generality, we can assume $k_1 \geq k_2$.
  
Given $x$, we construct  a feasible solution for the PIT instance as follows. 
We add a triangle $t$ of $G$ to our solution
if $\mathcal C_x$ contains exactly 4 triangles whose nodes are in the set $\{t_1, \dots, t_9,u,v,z\}$.
Any two such triangles must necessarily be node-disjoint, and therefore the solution we construct is indeed a feasible solution for our instance. 
Let $k:=k_1$ be the number of triangles of $G$ in this solution. Furthermore, let $k_{tot}$ be the total number of triangles in $G$. Then

\begin{equation}
\label{eq:second}
  |\mathcal C_x| \leq 6 k_{tot} + 2k
\end{equation}
where the inequality follows since  $|\mathcal C_x|= |\mathcal C^1_x| + |\mathcal C^2_x|$ and $k=k_1 \geq k_2$.

The last ingredient that we need is the following. 
Given an optimal solution $T^*$ for the PIT instance of value $opt_G$, we can construct vertex $\tilde x$ of $\mathcal P_{FM}$ as follows.
For each triangle $t$ of $G$ with nodes $u,v,z$ such that $t \in T^*$, we assign $\tilde x_e=\frac{1}{2}$ to all the edges of the 4 node-disjoint triangles whose nodes are in the set $\{t_1, \dots, t_9,u,v,z\}$, and we assign $\tilde x_e=\frac{1}{2}$ to all the edges of the 4 node-disjoint triangles whose nodes are in the set $\{\bar t_1, \dots, \bar t_9, \bar u, \bar v, \bar z\}$.
For each triangle $t$ of $G$ with nodes $u,v,z$ such that $t \notin T^*$,  we assign $\tilde x_e=\frac{1}{2}$ to all the edges of the 3 node-disjoint triangles whose nodes are in the set $\{t_1, \dots, t_9\}$, and we assign $\tilde x_e=\frac{1}{2}$ to all the edges of the 3 node-disjoint triangles whose nodes are in the set $\{\bar t_1, \dots, \bar t_9\}$. For each node $v \in V$ that is not a node of any triangle in $T^*$, we assign 
$\tilde x_e=1$ for $e=\{v, \bar v\}$. We set $\tilde x_e=0$ to all remaining edges. Then
\begin{equation}
\label{eq:third}
  {\bf 1}^T \tilde x + \frac{|\mathcal C_{\tilde x}|}{2} = \frac{|V'|}{2} + 3 k_{tot} + opt_G
\end{equation}

Putting things together, we have that:
 \begin{equation*}
 \begin{array}{cccc}
opt_G -k & = & {\bf 1}^T \tilde x + \frac{|\mathcal C_{\tilde x}|}{2} - \frac{|V'|}{2} - 3 k_{tot} -k \qquad \qquad& (\mbox{by (\ref{eq:third}))}\\
&\leq & {\bf 1}^T \tilde x + \frac{|\mathcal C_{\tilde x}|}{2} - \frac{|V'|}{2} - 3 k_{tot} - 
\frac{|\mathcal C_x|}{2} + 3 k_{tot} & (\mbox{by (\ref{eq:second})})\\
&\leq& {\bf 1}^T \tilde x + \frac{|\mathcal C_{\tilde x}|}{2} - {\bf 1}^T x - 
\frac{|\mathcal C_x|}{2}\qquad \qquad \qquad&\\
&\leq& opt_{G'} - k' \qquad \qquad \qquad \qquad \qquad \qquad& (\mbox{by (\ref{eq:first})}) \\
\end{array}
\end{equation*}
\hfill $\diamond$ 

The above two claims yield the desired L-reduction.
 \end{proof}

\bibliographystyle{plain}
\bibliography{references}

 \section*{Appendix}

\paragraph{Example A.} In Figure \ref{fig:exampleA}, black dashed edges represent the edges in the support of a vertex $y$, and gray dashed edges represent the edges in the support of a vertex $z$, all of value $\frac{1}{2}$. Here the diameter bound is 4, but any path requiring one separate move for each of the 4 triangles, would perform in total at least 5 moves.

\begin{figure}[H]
\begin{center} 
\includegraphics[scale=0.18]{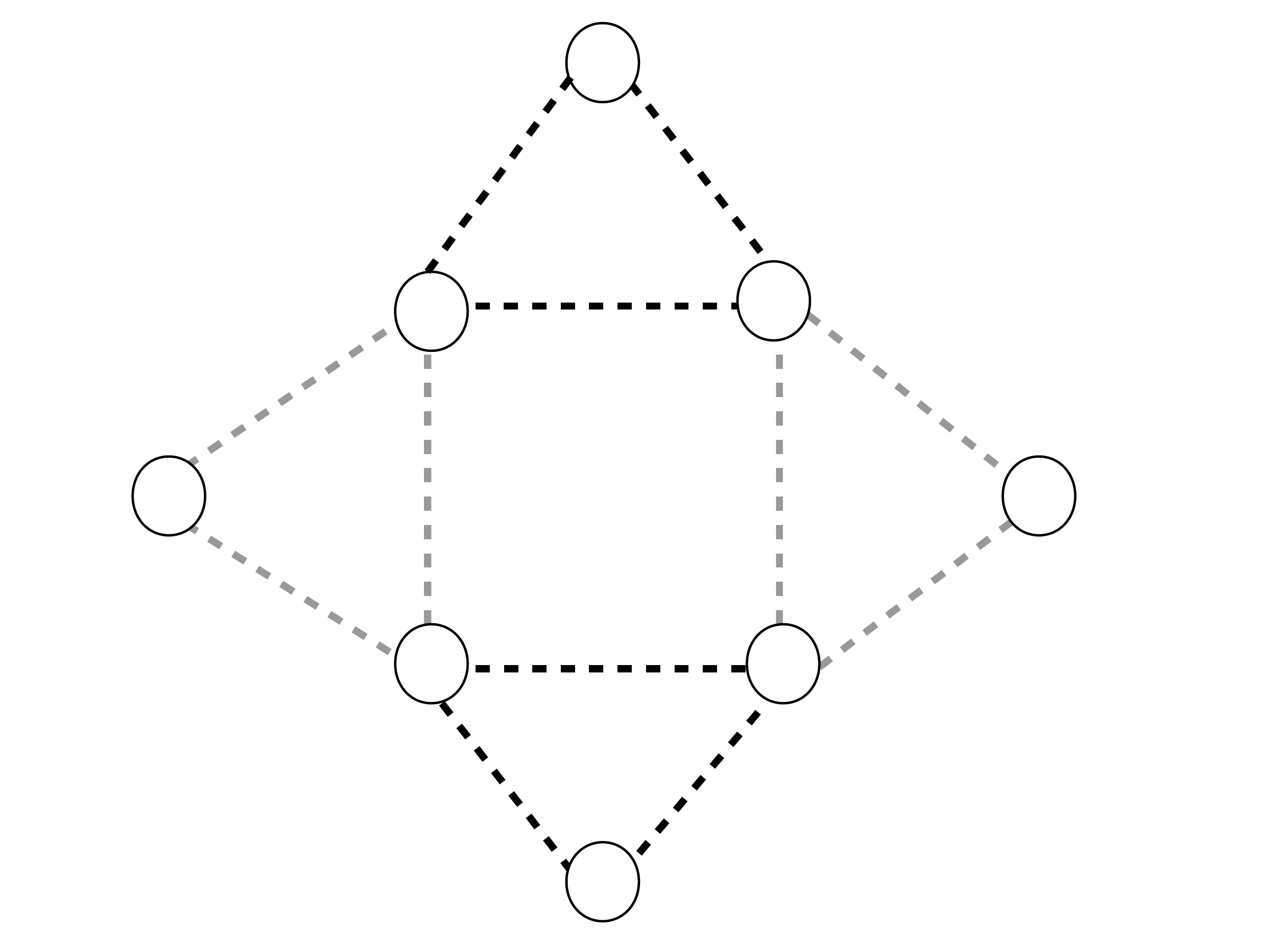}
\end{center}
\caption{Example showing that paths using only moves described in Lemma~\ref{cor:adjacency}(a),(b),(c), might exceed the diameter bound.}
\label{fig:exampleA} 
\end{figure}

\paragraph{Proof of Lemma \ref{cor:adjacency}.}
\begin{proof}
In each of the cases $(a)-(f)$, we will provide a
cost vector $c \in \mathbb R^E$, such that if one considers the optimization problem $\max \{c^Tx : x \in \mathcal P_{FM}\}$, then $z$ and $y$ are the unique optimal basic solutions. This implies that $z$ and $y$ are adjacent vertices.
In all cases, we are going to choose $c$  such that $c_e =1$ for all $e$ such that $ e \notin E(C) \mbox{ and } z_e >0 $, and 
we are going to choose $c$  such that $c_e < 0$ for all edges $e$ such that  $e \notin E(C) \mbox{ and } z_e = 0$.
The values for the coordinates of the edges of the component $C$ will be chosen as follows.
 
For $(a)$, one can choose $c$ such that $c_e =1$ for $e \in E(C)$.
For $(b)$, assume without loss of generality that $|\mathcal M_y \cap E(C)| \leq |\mathcal M_z \cap E(C)|$. One can choose 
$c$ such that $c_e =1$ for $e \in E(C) \cap \mathcal M_{y}$,
 $c_e = \frac{|\mathcal M_y \cap E(C)|}{|\mathcal M_z \cap E(C)|}$ for $e \in E(C) \cap \mathcal M_{z}$.
For $(c)$, assume without loss of generality that  $C$ is one odd cycle in $\mathcal C_z(y)$.
One can choose $c$ such that $c_e =1$ for $e \in E(C) \cap \mathcal M_{y}$,
$c_e = \frac{2|\mathcal M_y \cap E(C)|}{|E(C)|+1}$ for $e \in E(C) \setminus \mathcal M_{y}$.
For $(d)$ and for $(e)$, one can choose $c$ such that $c_e =1$ for $e \in E(C)$.
For $(f)$, assume without loss of generality that the odd cycle in $C$ belongs to the set $\mathcal C_z(y)$. 
One can choose $c$ such that $c_e =1$ for $e \in E( C) : z_e > 0$,
 $c_e = \frac{|\mathcal M_z \cap E(P)| + 0.5}{|\mathcal M_y \cap E(P)|}$ for $e \in E(P) \cap \mathcal M_{y}$.
In all such cases, one can see that $z$ and $y$ are the only optimal solutions that have the structure
described in Theorem \ref{thm:balinski}.
\end{proof}

\end{document}